\newtheorem{remark}[theorem]{Remark}
\newcommand{\sgn}{\operatorname{sgn}}
\newcommand{\tr}{\operatorname{Tr}}
\newcommand{\A}{\mathscr{S}}
\newcommand{\M}{\mathscr{M}}
\newcommand{\PP}{\mathcal{P}}
\newcommand{\R}{\mathbb{R}}
\newcommand{\s}{\sigma}
\newcommand{\argmin}{\arg \min}
\newcommand{\J}{\mathcal J}
\def\nnew{\color{black}}
\def\bnew{\color{black}}
\begin{document}

\title{Low-rank matrix recovery via iteratively reweighted least squares minimization}
\author{
Massimo Fornasier\footnote{Johann Radon Institute for Computational and 
Applied Mathematics, Austrian Academy of Sciences, Altenbergerstrasse 69, A-4040 Linz, Austria,
email: {\tt massimo.fornasier@oeaw.ac.at}.}, \  
Holger Rauhut\footnote{
Hausdorff Center for Mathematics \& Institute for Numerical Simulation, University of Bonn, Endenicher Allee 60,
D-53115 Bonn, Germany,
email: {\tt  rauhut@hcm.uni-bonn.de}.}, \ and
Rachel Ward\footnote{Mathematics Department, Courant Institute of Mathematical Sciences, New York University, 251 Mercer Street, New York, N.Y. 10012-1185, U.S.A., email:  
{\tt rward@cims.nyu.edu}.}}
\date{\today}

\maketitle


\begin{abstract}
We present and analyze an efficient implementation of an iteratively reweighted least squares algorithm for recovering a matrix from
a small number of linear measurements. The algorithm is designed for the simultaneous promotion of both a minimal {\it nuclear norm} and an approximatively {\it low-rank} solution.  
Under the assumption that the linear measurements fulfill a suitable generalization of the {\it Null Space Property} known in the context of compressed sensing, the algorithm is guaranteed to recover iteratively any matrix with an error of the order of the best $k$-rank approximation. In certain relevant cases, for instance for the {\it matrix completion} problem, our version of this algorithm can take advantage of the {\it Woodbury matrix identity}, which allows to expedite the solution of the least squares problems required at each iteration. We present numerical experiments which confirm the robustness of the algorithm for the solution of matrix completion problems, and demonstrate its competitiveness with respect to other techniques proposed recently in the literature. 
\end{abstract}

\noindent
{\bf AMS subject classification:}
65J22, 
65K10, 
52A41, 
49M30. 
\\

\noindent
{\bf Key Words:} low-rank matrix recovery, iteratively reweighted least squares, matrix completion. 
%

\section{Introduction}

Affine rank minimization refers to the problem of finding a matrix of minimal rank consistent with
a given underdetermined linear system of equations. This problem arises in many areas of 
science and technology, including system identification \cite{mofa10-1}, collaborative filtering, 
quantum state tomography \cite{beeiflgrli09,gr09-2}, 
signal processing, and image processing. An important
special case is the matrix completion problem \cite{care09,cata10,re09}, 
where one would like to fill in missing entries of a large data matrix which is assumed to have low-rank.  
  
Unfortunately, the affine rank minimization problem is
NP-hard in general \cite{fapareXX,na95}; therefore, it is desirable to have tractable alternatives.
{\bnew In \cite{fa02-2}, Fazel studied \emph{nuclear norm} minimization for this purpose, 
which was known to be a good proxy for rank
minimization.} The nuclear norm of a matrix is the $\ell_1$-norm of its singular values, and 
is the smallest convex envelope of the rank function \cite{fa02-2}.  Reformulated 
as a semidefinite program, nuclear norm minimization can be solved with
efficient methods \cite{bova04}. 

\subsection{Contribution of this paper}
Unfortunately, standard semidefinite programming tools work efficiently for solving
nuclear norm minimization problems
only for matrices up to size approximately $100 \times 100$.
Therefore, it is crucial to develop fast algorithms that are specialized to nuclear norm minimization
(or other heuristics for rank minimization).
 So far, several approaches have been suggested
 \cite{brle09,cacash10,goma09-1,kemooh09,kemooh09-1}. 
 Some aim at general nuclear norm
 minimization problems and others are specialized to matrix completion problems.
Inspired by the iteratively reweighted least-squares algorithm analyzed in \cite{dadefogu10} for sparse vector recovery, we develop a variant of this algorithm for nuclear norm minimization / low-rank matrix recovery. Each step of the proposed algorithm requires the computation
of a partial singular value decomposition and the solution of a (usually small) 
least squares problem, and both
of these tasks can be performed efficiently.   
{\bnew The analysis of our algorithm is based on a suitable matrix extension of the {\it null space
property}, well-known in the approximation theory literature in connection with $\ell_1$-minimization, see \cite{pink11,pink89} and reference therein, and recently popularized in the context of {\it compressive sensing} \cite{fora10-1}.} We show that the algorithm essentially has the same recovery guarantees as nuclear norm minimization.
Numerical experiments also show
that our algorithm is competitive with other state-of-the-art algorithms in the matrix completion setup
\cite{kemooh09,kemooh09-1, goma09-1}.  Unfortunately, the null space property fails in the matrix completion setup, and
the theoretical analysis of the algorithm seems to be much more involved. 
Such a theoretical analysis will  be postponed to later investigations.

\subsection{Low-rank matrix recovery and applications}
In the following, we will deal with real or complex matrices indifferently and we denote the space of $n \times p$ matrices by $M_{n \times p}$.
Given a linear map $\A: M_{n \times p} \to \mathbb{C}^m$, with $m \ll pn$, 
and a vector $\M \in \mathbb{C}^m$, 
we consider the affine rank minimization problem
\begin{equation}\label{rank:min}
\min_{X \in M_{n \times p}} \rank(X) \quad \mbox{ subject to } \A(X) = \M.
\end{equation}
An important special case of low-rank matrix recovery is matrix completion, where $\A$ samples entries, 
\begin{equation}\label{matrix:completion}
\A(X)_\ell = x_{ij},
\end{equation} 
for some $i,j$ depending on $\ell$.  Such low-rank matrix recovery problems often arise; examples include the {\it Netflix problem}\footnote{Netflix Prize sought to substantially improve the accuracy of predictions about how much someone is going to enjoy a movie based on their movie preferences {\tt http://www.netflixprize.com/}.} or the recovery of positions from partial distance information \cite{capl09}. We refer to \cite{care09,cata10} for further details.  
As an example of low-rank matrix recovery from more general linear measurements, we can turn to quantum state tomography \cite{beeiflgrli09}, where one tries to recover a quantum state,
that is, a square matrix $X \in M_{n \times n}$, that is positive semidefinite and has trace
$1$. A pure state has rank $1$, and a mixed state is of low-rank, or approximately low-rank.
Then one has given a collection of matrices
$A_{k} \in M_{n \times n}$, $k = 1,\hdots,n^2$, (so called Pauli-Matrices) 
and takes partial ``quantum observations'' $\M_j = \tr(A_{k_j}^* X)$, 
$j=1,\hdots,r$ with $r \leq n^2$, and the task is to recover a low-rank state $X$. 
We refer to \cite{beeiflgrli09,gr09-2} for details.

\subsection{Theoretical results}
\label{sec:theory}

As already mentioned the rank minimization problem \eqref{rank:min}
is NP-hard in general, and therefore we consider its convex relaxation
\begin{equation}\label{nucl:norm:min}
\min_{X \in M_{n \times p}} \|X\|_* \quad \mbox{ subject to } \A(X) = \M,
\end{equation}
where  $\|X\|_* = \sum_{i=1}^n \sigma_i(X)$ denotes the nuclear norm 
(or Schatten-$1$ norm, or trace norm), where $\sigma_i(X)$ are the singular values
of $X$.
There are two known regimes where 
nuclear norm minimization can be guaranteed to return minimal-rank solutions:

\paragraph{1.  RIP measurement maps}

The (rank) restricted isometry property (RIP) is analogous to the by-now classical restricted
isometry property (RIP) from compressed sensing \cite{capl09,fapareXX}.

\begin{definition}[Restricted Isometry Property \cite{fapareXX}]
\label{def:rip}
Let $\A: M_{n \times p} \rightarrow \mathbb{C}^m$ be a linear map.  For every integer $k$, 
with $1 \leq k \leq n$, define the $k$-restricted isometry constant $\delta_k = \delta_k(\A) > 0$ to be the smallest number such that   
\[
\label{ric}
(1-\delta_k) \|X\|_F^2 \leq \|\A(X)\|_{\ell_2^m}^2 \leq (1+\delta_k) \|X\|_F^2 
\]
holds for all $k$-rank matrices $X$.
\end{definition}

It is shown in \cite{fapareXX}
that nuclear norm miminization \eqref{nucl:norm:min} recovers all matrices $X$ 
of rank at most $k$ from the measurements $\M=\A(X)$ {\nnew{provided $\delta_{5k}$ is small enough. Below we improve this to 
$\delta_{4k} \leq \sqrt{2} - 1$, and we refer to \cite{capl09,oymofaha11} for related results}}.


The restricted isometry property is known to hold for 
Gaussian (or more generally subgaussian) measurement maps $\A$ in \cite{fapareXX,capl09}. These are maps of the form
\begin{equation}
\label{eq:formM}
\A(X)_\ell = \sum_{k,j} a_{\ell,k,j} X_{k,j}, \quad \ell = 1,\hdots,m, 
\end{equation}
where the $a_{\ell,k,j}$ are independent normal distributed random variables with mean
zero and variance $1/m$. Such a map satisfies $\delta_k \leq \delta \in (0,1)$
with high probability provided
\begin{equation}\label{bound:r}
m \geq C_\delta \max\{p,n\} k,
\end{equation}
see Theorem 2.3 in \cite{capl09}.
Since the degrees of freedom of a rank $k$ matrix $X \in M_{n \times p}$ are
$k(n+p-k)$, the above bound matches this number up to possibly a constant. 
Therefore, the bound \eqref{bound:r} is optimal. 

Using recent results in \cite{ailib, kw-10}, it follows that the restricted isometry property also holds for certain \emph{structured} random matrices if slightly more measurements are allowed;  
in particular, for maps $\A$ of the form \eqref{eq:formM} where the $a_{\ell,k,j}$ -- unraveled for each $\ell$ into vectors of length $pn$ -- correspond to $m$ rows randomly selected from the $pn \times pn$ discrete Fourier matrix (or 2D Fourier transform matrix) with randomized column signs, 
$\delta_k \leq \delta \in (0,1)$ with high probability as long as 
\begin{equation}\label{fourier:r}
m \geq C_{\delta} \max\{p,n\} k \log^{\nnew{4}}(pn).
\end{equation}
This follows from the recent findings in \cite{ailib, kw-10} that such random partial Fourier measurements satisfy a concentration inequality of the form, for all $0 < \varepsilon < 1$,
\begin{equation}
\label{eq:concentration}
\mathbb{P}\Big(\left| \| \A(X) \|^2 - \| X \|_F^2 \right| \geq \varepsilon \| X \|_F^2 \Big) \leq 2 \exp{(-\frac{m}{2}C_\varepsilon \log^{-4}(pn))}.
\end{equation}
Subgaussian measurement maps also satisfy \eqref{eq:concentration}, and for these maps, the factor of $\log^{-4}(pn)$ in the exponent can be removed.  The proof of RIP for subgaussian random ensembles appearing in {\nnew{Theorem 2.3 in \cite{capl09} (see also Theorem $4.2$ in \cite{fapareXX})}} 
relies only on such concentration for subgaussian measurement maps. 
Therefore, obvious modifications to that proof to accommodate the additional logarithmic factors in the exponent \eqref{eq:concentration} give the stated RIP results for random partial Fourier measurements.  

\paragraph{2. Matrix completion}In the matrix completion setup \eqref{matrix:completion} where measurements are pointwise observations of entries of the matrix, there are obvious 
rank one matrices in the kernel of the operator $\A$; therefore, the RIP fails completely in this setting, and  `localized' low-rank matrices in the null space of $\A$ cannot be recovered by any method whatsoever.
 However, if certain conditions on the left and right singular vectors of the underlying low-rank matrix are imposed, essentially requiring that such vectors are uncorrelated with
the canonical basis, then it was shown in \cite{care09,cata10,re09} that such incoherent matrices of rank at most $k$ can be recovered from $m$
randomly chosen entries with high probability provided 
\[
m \geq Ck \max\{n,p\} \log^2(\max\{n,p\}).
\]
Up to perhaps the exponent $2$ at the $\log$-term, this is optimal. We refer to \cite{care09,cata10,re09} for detailed statements. In \cite{beeiflgrli09,gr09-2} extensions
to quantum state tomography can be found.

\section{Notation and Preliminaries}

\subsection{Notation}

The entries of a matrix $X \in M_{n \times p}$ are denoted by lower case letters and the corresponding indices, i.e., $X_{ij}=x_{ij}$. We denote the adjoint matrix $X^* \in M_{p \times n}$ as usual.
In the following and without loss of generality, we assume that $n \leq p$.
For a generic matrix $X \in   M_{n \times p}$ we write its {\it singular value decomposition} 
$$
 X= U \Sigma V^*
$$
for unitary matrices $U \in M_{n \times n}, V \in M_{p \times p}$  and a diagonal matrix
$\Sigma = \diag(\s_1,\dots,\s_n) \in M_{n \times p}$, 
where $\s_1 \geq \s_2\geq \dots \s_n \geq 0$ are the {\it singular values}. In the following, we denote $\sigma(X)$ the vector of the singular values of the matrix $X$. 
For a specific matrix $X$ we indicate the singular value decomposition as $X= U_X \Sigma_X V_X^*$.
For self-adjoint matrices $X = X^*$, we have $V=U$. We write $X \succ 0$ if $X$ is self-adjoint and positive-definite.
In this case we can define, for $\alpha \in \R$, the $\alpha-$th 
power of the matrix $X$ 
by $X^\alpha = U \Sigma^\alpha U^*$, 
where $\Sigma^\alpha = \diag(\s_1^\alpha,\dots,\s_n^\alpha)$.
If $X$ is positive semi-definite, we write $X \succeq 0$.

The rank of $ X \in M_{n \times p}$ denoted by $\rank(X)$ equals the number of nonzero singular values of $X$. We will say that $X$ is a \emph{$k$-rank matrix} if $\rank(X) \leq k$.
The trace of a square matrix $X \in M_{n \times n}$ is the sum of its diagonal entries, i.e.,
$\tr(X) = \sum_{i=1}^n x_{ii}$. 
The trace satisfies $\tr(X) = \overline{\tr(X^*)}$ and is cyclic, that is, $\tr(XY) = \tr(YX)$ for all 
matrices $X,Y$ with matching dimensions. If $\lambda_i$ are the eigenvalues of $X$ then $\tr(X) = \sum_{i} \lambda_i$.
{\nnew{Denote by $E_{ij}$ the matrix whose $(i,j)$-entry is $1$ and all other entries take the value $0$.}}
Then $\tr(E_{ij} X) = x_{ji}$. 

The space of complex matrices $M_{n\times p}$ forms a Hilbert space 
when endowed with the natural scalar product $\langle X, Y \rangle = \tr (X Y^*),$
which induces the {\it Frobenius norm} $\| X\|_F = \langle X, X \rangle^{1/2}.$
The following identities hold: $
\| X\|_F =\left ( \sum_{i=1}^n \sum_{j=1}^p |x_{ij}|^2 \right )^{1/2} = \left ( \sum_{i=1}^n \s_i^2 \right)^{1/2}.$ 
More generally, we consider the Schatten $q$-norms $\|X\|_{S_q} := \|\sigma(X)\|_{\ell_q^n}$ \cite{bh97,hojo90}, which are based on the $\ell_q^n$-vector norms $\| x \|_{\ell_q^n} := \big( \sum_{i=1}^n |x_i|^q \big)^{1/q}, $ $1 \leq q < \infty$, and $\| x \|_{\ell_{\infty}^n} := \max_{i=1}^n |x_i|$. Of particular importance is the Schatten $1$-norm, or nuclear norm, which we also denote by
$\| X\|_* = \| X\|_{S_1} = \sum_{i=1}^n \s_i(X).$ The operator norm 
$\|X\|_{op} = \|X\|_{S_\infty}$ will be needed as well.

If $X X^*$ is invertible, then the {\it absolute value} of $X$ is defined by
\begin{equation}
\label{absval}
| X | = (X X^*)^{-1/2} X X^* {\bnew = (X X^*)^{1/2}};
\end{equation}
in this case, the Schatten $q$-norm satisfies
\begin{equation}
\label{schatten}
\| X\|_{S_q} := \left ( \tr |X|^q \right )^{1/q}
\end{equation}
for all $1 \leq q < \infty$. 

We consider linear operators on matrices of the type $\A: M_{n \times p} \to \mathbb C^m$. We denote the action of $\A$ on the matrix $X$ by $\A(X)$ in order to distinguish it from $\A X$ which may create confusion and be interpreted as a matrix-matrix multiplication in certain passages below.  Rather, as the vector space $M_{n \times p}$ is isomorphic to the vector space $\mathbb{C}^{n \times p}$, linear operators of the form $\A: M_{n \times p} \to \mathbb C^m$ may be interpreted as elements of $M_{m \times np}$.  We denote by $\A^*$ the adjoint operator of $\A$, such that
{\nnew{$
\langle \A(X), \lambda \rangle_{\mathbb C^m} = \langle X, \A^*(\lambda) \rangle$ for all $X \in M_{n \times p}, \lambda \in \mathbb C^m,
$}}
where the former scalar product is the Euclidean one on $\mathbb C^m$ and the latter scalar product is the one inducing the Frobenius norm. We denote by $I:=I_n \in M_{n \times n}$ the identity matrix. 

Finally, let us define the {\it $k$-spectral truncation} of $X$ by
$$
X_{[k]} = U \Sigma_{[k]} V^*,
$$
where $\Sigma_{[k]} = \diag(\sigma_1, \dots, \sigma_k,0, \dots,0)$. {\bnew Thus, this operation acts by setting to $0$ all the singular values with indexes from $k+1$ to $n$}.  We also introduce the $\varepsilon$-stabilization of $X$ by
\begin{equation}
\label{estab}
X_{\varepsilon} = U \Sigma_{\varepsilon} V^*,
\end{equation}
where 
$\Sigma_{\varepsilon} = \diag( \max\{ \sigma_j, \varepsilon \} )$. That is, under this operation, all singular values below a certain minimal threshold are increased to the threshold.  

For a self-adjoint matrix $X = U \Sigma U^* \in M_{n \times n}$, note that
\begin{eqnarray}
X_{\varepsilon} &=& U \widetilde{\Sigma}U^* +  \varepsilon I, 
\end{eqnarray}
where $\widetilde{\Sigma} =  \diag{\big( \max\{\sigma_j - \varepsilon, 0 \} \big)}$.  In particular,  if a significant number of the singular values of $X$ fall below the threshold $\varepsilon$, then $X_{\varepsilon}$ is decomposed into the sum of a low-rank matrix and {\nnew{a scaled identity matrix}}.

\subsection{Motivation and formulation of the algorithm}

Let us fix a sampling operator $\A: M_{n \times p} \to \mathbb C^m$ 
and a measured data vector $\M \in \mathbb C^m$. We are interested in the rank minimization problem
\begin{equation}
\label{rankmin}
\argmin_{\A (X) = \M} \rank (X).
\end{equation}
As mentioned in the introduction, this problem can be efficiently solved under suitable assumptions on $\A$ by considering its convex relaxation
\begin{equation}
\label{nuclmin}
\argmin_{\A (X) = \M} \| X \|_*,
\end{equation}
where rank minimization is substituted by nuclear norm minimization. 

We propose an algorithm for solving \eqref{nuclmin} reminiscent of iteratively reweighted least
squares algorithm for linearly-constrained $\ell_1$-norm minimization, and is based on the following
motivation: if all of the singular values of $X \in M_{n \times p}$ are nonzero, then, according to
\eqref{absval}-\eqref{schatten}, we may re-write its nuclear norm as
\begin{eqnarray}
 \| X\|_* = \| X\|_{S^1} &=& \tr \left [ (X X^*)^{-1/2} (X X^*) \right] = \| W^{1/2}X\|^2_{F},
 \end{eqnarray}
 where $W = (X X^*)^{-1/2}$.  This suggests the following approach for solving the nuclear norm minimization problem \eqref{nuclmin}: let us initialize a weight matrix $W^0 \in M_{n \times n}$, and then recursively define, for $\ell = 0,1,\dots,$
\begin{equation}
\label{xn}
X^{\ell+1} = \arg \min_{\A(X)=\M } \|(W^{\ell})^{1/2} X  \|_F^2, \hspace{5mm} \textrm{and } \hspace{3mm} W^{\ell+1} = \Big( X^{\ell+1} \big( X^{\ell+1} \big)^* \Big)^{-1/2}.
\end{equation}

Observe that the minimization $ \bar{X} = \arg \min_{\A(X) = \M} \| W^{1/2} X \|_F^2 $ can be reformulated as a weighted least squares problem with $m$ linear constraints on the $n p$ variables $x_{ij}$; each of the updates for $X^{\ell+1}$ and $W^{\ell+1}$ can then be performed explicitly. However,  once any of the singular values of $X^{\ell + 1}$ become small -- a desirable situation as we seek to recover low-rank matrices -- the computation of $W^{\ell+1}$  becomes ill-conditioned.   

To stabilize the algorithm, we fix a parameter $\varepsilon > 0$, and {\nnew{increase}} 
the small singular values of $X^{\ell}$ according to $\sigma_i(X^{\ell}) \longrightarrow \max\{ \sigma_i(X^{\ell}), \varepsilon\}$ after updating $X^{\ell}$ but before computing $W^{\ell}$.   That is, we replace $X^{\ell}$ by its $\varepsilon$-stabilization $X^{\ell}_{\varepsilon} = U \Sigma^{\ell}_{\varepsilon} V^*$, as defined in \eqref{estab}.  The idea is that $X^{\ell}_{\varepsilon}$ is well-conditioned and, additionally, if $X^{\ell}$ is well-approximated by a $k$-rank matrix, then $X^{\ell}_{\varepsilon}$ is well-approximated by the same $k$-rank matrix.  Keeping $\varepsilon > 0$ fixed throughout the iterations, we would no longer expect the algorithm to converge to the nuclear norm solution \eqref{nuclmin}.  Nevertheless, if we allow for $\varepsilon = \varepsilon_{\ell} \rightarrow 0$ in such a way that $\varepsilon_{\ell} < \sigma_k(X^{\ell})$ is maintained at each iteration, then one may hope for both stability and convergence towards a $k$-rank solution.  Collecting these ideas together, we arrive at the following \emph{iterative reweighted least squares algorithm} for low-rank matrix recovery (IRLS-M):

\medskip

\noindent
\fbox{
\begin{minipage}{14cm}
{\bf IRLS-M algorithm for low-rank matrix recovery: } Initialize $W^0 = I \in M_{n \times n}$.  Set $\varepsilon_0:=1$, $K\geq k$, and $\gamma >0$. Then recursively define, for $\ell = 1,2,\dots,$
{\nnew{
\begin{align}
\label{xn2}
X^{\ell} &:=\arg \min_{\A(X)=\M } \| (W^{\ell-1})^{1/2} X \|_F^2,\\
\label{en}
\varepsilon_{\ell} &:=\min \left\{\varepsilon_{\ell-1}, \gamma \s_{K+1}(X^{\ell}) \right\}.
\end{align}}}
Compute $U^{\ell} \in M_{n \times n}$ and $\Sigma^{\ell} = \diag{(\sigma_1^{\ell}, ..., \sigma_n^{\ell} )} \in M_{n \times n}$ for which
{\nnew{
\begin{align}
\label{x:decomp}
X^{\ell} (X^{\ell})^* &= U^\ell \big( \Sigma^{\ell} \big)^2 (U^\ell)^*.\\
\label{wn}
{\mbox{Set }}\qquad\qquad \qquad W^{\ell} &= U^\ell \big( \Sigma_{\varepsilon_{\ell}}^{\ell} \big)^{-1} (U^\ell)^*,
\end{align}}}
where $\Sigma_{\varepsilon}$ denotes the $\varepsilon$-stabilization of the matrix $\Sigma$, 
see \eqref{estab}.
The algorithm stops if $\varepsilon_\ell=0$; in this case, define $X^j:=X^\ell$ for 
$j>\ell$.  In general, the algorithm generates an infinite sequence 
$(X^\ell)_{\ell\in \mathbb N}$ of matrices. 
\end{minipage}
}

\medskip

\paragraph{Reduction to iteratively reweighted least squares for vector recovery} Suppose now that  $n=p$ and the linear constraints $\A(X) = \M$ are equivalent to the $m \times n$ linear system $S x= \M$ exclusively acting on the vector $x = \diag{X}$, and $x_{ij}=0$ is enforced  for $i \neq j$. Then the nuclear norm minimization problem \eqref{nuclmin} reduces to that of finding the vector $x \in \mathbb{C}^n$ of minimal $\ell_1^n$-norm which solves the optimization problem
\begin{equation}
\label{l1min}
\argmin_{S z = \M} \| z \|_{\ell_1^n}.
\end{equation}
In this context, the updates for $X^{\ell+1}$ and $W^{\ell+1}$ in the IRLS-M algorithm reduce to vector updates $x^{\ell+1} \in \mathbb{C}^n$ and $w^{\ell+1} \in \mathbb{R}^n_+$.





Although iteratively reweighted least squares algorithms for solving constrained $\ell_1^n$ minimization problems of the form \eqref{l1min} 
have been around for half a century, it was not until recently \cite{dadefogu10} that conditions on the matrix $S\in M_{m \times n}$ were provided under which an {\nnew{iteratively}} reweighted least squares algorithm could be \emph{proven} to 
converge to the solution of the $\ell_1^n$ minimization problem \eqref{l1min}.   
In the algorithm proposed in \cite{dadefogu10}, the weight vector $w^{\ell}$ is updated slightly differently than in the current setting; in \cite{dadefogu10} 
it is updated according to $w_i^{\ell} = (|x_i^{\ell}|^2+ \varepsilon_{\ell}^2)^{-1/2},$
while our algorithm uses the update rule
\[
\label{update_rule}
w_i^{\ell}= \left\{ \begin{array}{cc}  |x^{\ell}_i|^{-1}, & | x_i^{\ell} | \geq \varepsilon_{\ell},  \\
\varepsilon_{\ell}^{-1}, & \textrm{else}.
\end{array} \right.
\]
Despite this difference, the assumptions, which we shall require on the measurements $\A$ in order to prove convergence 
of the IRLS-M algorithm to the nuclear-norm solution \eqref{nuclmin}, draw upon the the variational framework introduced in \cite{dadefogu10}.  
However, our update rule for the weights \eqref{update_rule} is reminiscent of a previously studied IRLS algorithm \cite{chamlions97}, 
which was introduced in the 
slightly different context of {\it total variation minimization} as opposed to $\ell_1$ minimization.  
However, in that algorithm, the parameter $\varepsilon_{\ell} = \varepsilon>0$ is fixed throughout the iteration; the corresponding iterations
$x^{\ell}_{\varepsilon}$ are shown to converge to a solution $x = x(\varepsilon)$, and one can show that $x(\varepsilon)$  
converges to the $\ell_1$-norm solution $x^*$ as $\varepsilon \rightarrow 0$ provided that $x^*$ is unique.  
For an overview of {\nnew{iteratively}} reweighted least squares minimization, we refer the reader to \cite{dadefogu10}.   

\subsection{Convergence results}

As a consequence of the main result of this paper, Theorem \ref{secondres}, we have the following convergence result for the IRLS-algorithm under the assumption that the measurement map has the restricted isometry property in Definition \ref{def:rip}.

\begin{proposition}
\label{prop:mini}
Consider the IRLS-M algorithm with parameters $\gamma = 1/n$ and $K \in  \mathbb{N}$.  
Let $\A: M_{n \times p} \rightarrow \mathbb{C}^m$ be a surjective map with restricted isometry constants $\delta_{3K}$, $\delta_{4K}$ satisfying $\eta =\frac{\sqrt{2}\delta_{4K}}{1-\delta_{3K}} < 1 - \frac{2}{K-2}$.  Then, if there exists a $k$-rank matrix $X$ satisfying $\A (X) = \M$ with $k < K - \frac{2\eta}{1 - \eta}$, the sequence $(X^{\ell})_{\ell \in \mathbb{N}}$ converges to $X$.
\end{proposition}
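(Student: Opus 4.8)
The statement is advertised as a corollary of the main theorem (Theorem~\ref{secondres}), whose hypothesis is the matrix version of the Null Space Property. The plan is therefore to reduce everything to the single implication: the restricted isometry assumption $\eta=\frac{\sqrt2\,\delta_{4K}}{1-\delta_{3K}}<1-\frac{2}{K-2}$ forces $\A$ to satisfy the matrix Null Space Property of order $K$ with constant $\eta$. Once this is established, Theorem~\ref{secondres} applies verbatim: its remaining hypotheses --- the smallness condition $\eta<1-\frac{2}{K-2}$ and the existence of a $k$-rank solution of $\A(X)=\M$ with $k<K-\frac{2\eta}{1-\eta}$ --- are exactly what is assumed, and the parameter choice $\gamma=1/n$ is precisely the one under which the $\varepsilon$-update \eqref{en} satisfies the book-keeping constraints needed by that theorem. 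Surjectivity of $\A$ is invoked only to guarantee that each weighted least squares step \eqref{xn2} is solvable.

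The one genuine step is the implication RIP $\Rightarrow$ matrix NSP, which I would prove by the cone-decomposition argument transplanted from compressed sensing. Let $Z\in\ker\A$ with singular value decomposition $Z=U\Sigma V^{*}$, $\sigma_1\ge\sigma_2\ge\cdots$. Let $T_0$ collect the indices of the $K$ largest singular values, split the remaining indices into consecutive blocks $T_1,T_2,\dots$ of decreasing singular value (of the sizes dictated by the partition), and set $Z_{T_j}=U\Sigma_{T_j}V^{*}$, so each $Z_{T_j}$ has controlled rank and $\langle Z_{T_i},Z_{T_j}\rangle=0$, $Z_{T_i}Z_{T_j}^{*}=Z_{T_i}^{*}Z_{T_j}=0$ for $i\ne j$. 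Two classical estimates then do the work: the averaging bound $\|Z_{T_{j+1}}\|_F\le(\operatorname{card}T_j)^{-1/2}\|Z_{T_j}\|_*$, which summed yields $\sum_{j\ge2}\|Z_{T_j}\|_F\le K^{-1/2}\|Z-Z_{[K]}\|_*$; and the matrix near-orthogonality lemma (Recht--Fazel--Parrilo), $|\langle\A X,\A Y\rangle|\le\delta_{\operatorname{rank}X+\operatorname{rank}Y}\|X\|_F\|Y\|_F$ whenever $\langle X,Y\rangle=0$. Writing $\A(Z_{T_0}+Z_{T_1})=-\sum_{j\ge2}\A(Z_{T_j})$, I would bound the left-hand side below using the restricted isometry property and the right-hand side above via near-orthogonality --- splitting $Z_{T_0}+Z_{T_1}$ into its two Frobenius-orthogonal pieces, which is where the factor $\sqrt2$ enters through $\|Z_{T_0}\|_F+\|Z_{T_1}\|_F\le\sqrt2\,\|Z_{T_0}+Z_{T_1}\|_F$ --- and then insert the averaging bound to get $\|Z_{T_0}+Z_{T_1}\|_F\le\eta\,K^{-1/2}\|Z-Z_{[K]}\|_*$. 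Since $\|Z_{[K]}\|_*=\|Z_{T_0}\|_*\le\sqrt K\,\|Z_{T_0}\|_F\le\sqrt K\,\|Z_{T_0}+Z_{T_1}\|_F$, this is precisely $\|Z_{[K]}\|_*\le\eta\,\|Z-Z_{[K]}\|_*$, i.e.\ the matrix NSP of order $K$ with constant $\eta$.

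The \emph{real} obstacle is Theorem~\ref{secondres} itself, which is not re-proved here; within this proposition the delicate points are purely the book-keeping. One must choose the block sizes in the decomposition so that the ranks entering the two estimates produce exactly the restricted isometry constants $\delta_{3K}$ and $\delta_{4K}$ demanded by the hypothesis, rather than the $\delta_{2K},\delta_{3K}$ that the most naive partition would give; and one must verify that the version of the Null Space Property one ends up proving is literally the hypothesis of Theorem~\ref{secondres} --- in particular, that the $\varepsilon$-stabilized weights $W^{\ell}=U^{\ell}(\Sigma^{\ell}_{\varepsilon_\ell})^{-1}(U^{\ell})^{*}$, which (as noted after \eqref{estab}) decompose into a low-rank part plus a scaled identity, do not force a strengthened or ``robust'' NSP of higher order. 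Finally, one should check that the two numerical constraints $\eta<1-\frac{2}{K-2}$ and $k<K-\frac{2\eta}{1-\eta}$ are mutually compatible and non-vacuous over the relevant range of $K$. The averaging bound, the matrix near-orthogonality lemma, and the use of surjectivity for solvability of \eqref{xn2} are all routine.
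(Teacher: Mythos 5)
Your overall architecture is the paper's: reduce to the implication ``RIP $\Rightarrow$ null space property'' and then invoke Theorem~\ref{secondres}. But there is a genuine gap in the reduction itself. What you sketch is the \emph{weak} rank null space property, $\|Z_{[K]}\|_* \leq \eta\,\|Z - Z_{[K]}\|_*$ for $Z \in \ker\A$, obtained by decomposing $Z$ along its \emph{own} singular value decomposition. Theorem~\ref{secondres} requires the \emph{strong} rank null space property of Definition~\ref{def:SRNSP}: for \emph{every} decomposition $Z = X_1 + X_2$ with $X_1$ of rank at most $k$ --- where $X_1$ is arbitrary and in general not spectrally aligned with $Z$ --- there must exist a decomposition $Z = H_1 + H_2$ with $\rank(H_1)\le 2k$, $\langle H_1,H_2\rangle=0$, and the two-sided orthogonality $X_1H_2^*=0$, $X_1^*H_2=0$, together with $\|H_1\|_*\le\eta\|H_2\|_*$. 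That orthogonality is not decorative: it is exactly what allows Lemma~\ref{lm5} (additivity of the nuclear norm) to fire inside the inverse triangle inequality, Lemma~\ref{lm7}, which is the engine of the robustness estimate in Theorem~\ref{secondres}(ii) and hence of part (iii), the part you need to conclude $\varepsilon=0$. Your decomposition, anchored on the SVD of $Z$, cannot produce an $H_2$ orthogonal to an arbitrary $k$-rank $X_1$, so the property you prove is not the hypothesis of the theorem you want to apply. You flagged this worry yourself but left it as book-keeping; it is in fact the substantive point.

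The paper's Proposition~\ref{prop:RIP:RNSP} resolves it by anchoring the block decomposition on the SVD of $X_1$ rather than of $Z$: writing $\hat H = U^*ZV$ in the basis of $X_1$, the matrix $H_0$ is the part of $\hat H$ meeting the first $k$ rows or columns (hence rank $\le 2k$ and automatically satisfying $X_1H_c^*=X_1^*H_c=0$), and only the residual $(n-k)\times(p-k)$ corner is chopped into rank-$\ell$ blocks with $\ell=k$. This also explains the two discrepancies you noticed but could not fix: the constants are $\delta_{3K}=\delta_{2K+\ell}$ and $\delta_{4K}=\delta_{2K+2\ell}$ precisely because $H_0$ has rank $2K$ rather than $K$, and the factor $\sqrt{2}$ arises as $\sqrt{2k/\ell}$ from $\|H_0\|_*\le\sqrt{2k}\,\|H_0\|_F$, not from splitting $\|Z_{T_0}\|_F+\|Z_{T_1}\|_F\le\sqrt2\,\|Z_{T_0}+Z_{T_1}\|_F$. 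Once the SRNSP of order $K$ with constant $\eta$ is in hand, the rest is as you say: Theorem~\ref{secondres}(iii) gives $\varepsilon=0$, part (i) gives convergence to the unique nuclear norm minimizer, and Corollary~\ref{cor:SRNMP} identifies that minimizer with the $k$-rank matrix $X$.
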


Actually, Theorem \ref{secondres} proves something stronger: the IRLS-M algorithm is \emph{robust}, in the sense that under the same conditions on the measurement map $\A$, the accumulation points of the IRLS-M algorithm are guaranteed to approximate an arbitrary $X \in M_{n \times p}$ from the measurements $\M = \A(X)$ to within a factor of the \emph{best $k$-rank approximation error} of $X$ in the nuclear norm.   

The IRLS-M algorithm also has nice features independent of the affine measurements.  In particular, the parameter $\varepsilon_\ell$ can serve as an {\it a posteriori} check on whether or not we have converged to a low-rank (or approximately low-rank) matrix consistent with the observations:  
\begin{remark}
\label{zerores}
If $\varepsilon_{\ell-1} > \varepsilon$ and $\varepsilon_{\ell} =  \gamma \sigma_{K+1}(X^\ell)=\varepsilon$, then 
{\nnew{$\| X^{\ell} - X^{\ell}_{[K]} \|_{op} \leq \varepsilon/\gamma.$}}
In particular, if $\varepsilon_{\ell} = 0$, then $X^{\ell}$ is a $K$-rank matrix.
\end{remark}


\section{Numerical Experiments}

In the matrix completion setup \eqref{matrix:completion} where the affine measurements are entries of the underlying matrix, the restricted isometry property fails, and so Proposition \ref{prop:mini} does not apply.  Nevertheless, we illustrate in this section numerical experiments which show
that the IRLS-M algorithm still works very well in this setting for recovering low-rank matrices, and is competitive with state-of-the-art algorithms in the matrix completion setup; namely, the Fixed Point Continuation algorithm (FPCA), as introduced in \cite{goma09-1}, and the OptSpace algorithm, as introduced in \cite{kemooh09}.   For further numerical experiments comparing an algorithm closely related to IRLS-M with other matrix completion algorithms such as Singular Value Thresholding \cite{cacash10}, we refer the reader to \cite{mofa10}.  

\begin{figure}[h]
\label{fig:1}
\subfigure[Relative error (Frobenius norm)]{
\includegraphics[scale=.52]{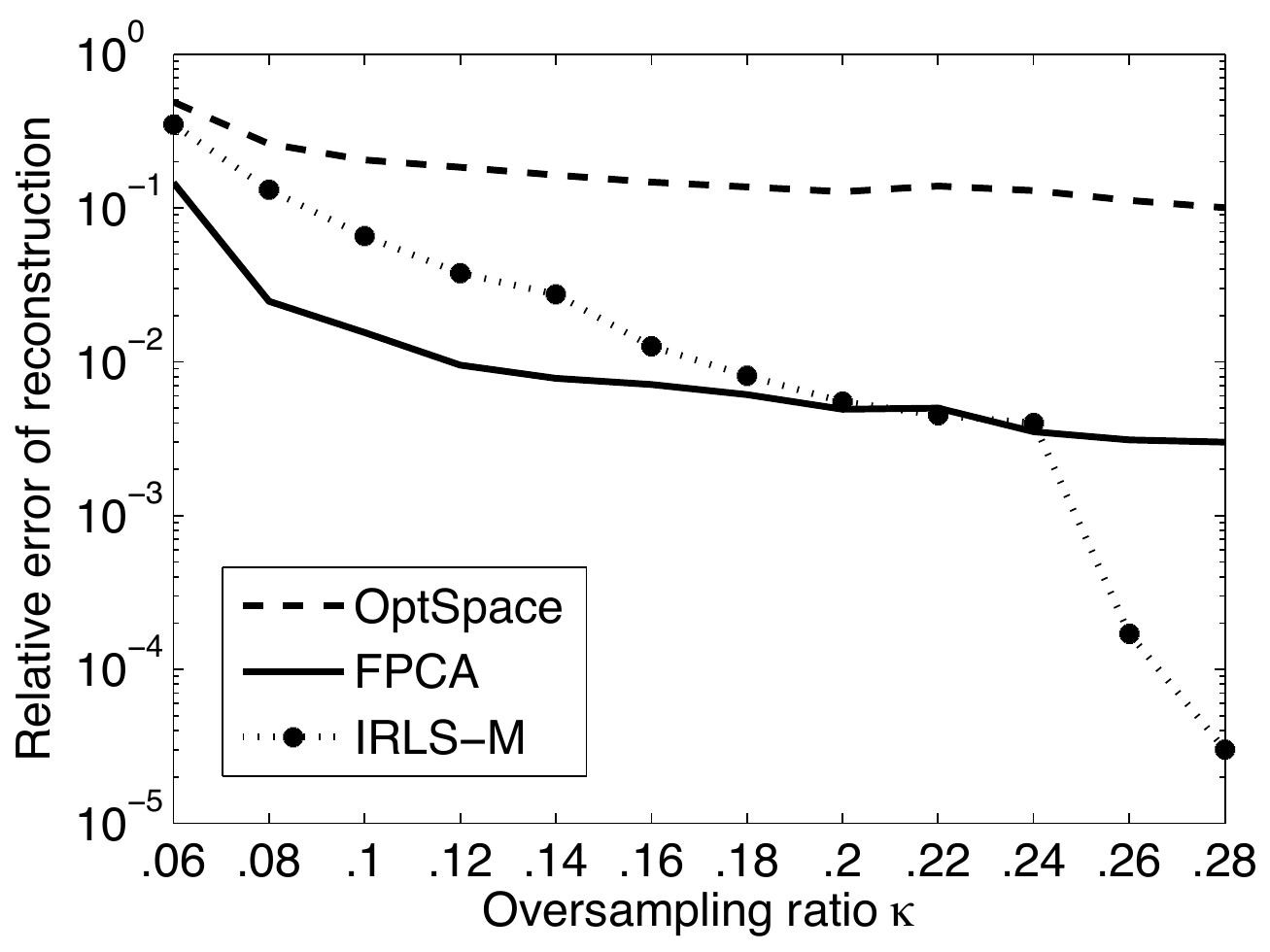} \label{1:a} 
}
\subfigure[Reconstruction time (s)]
{\includegraphics[scale =.52] {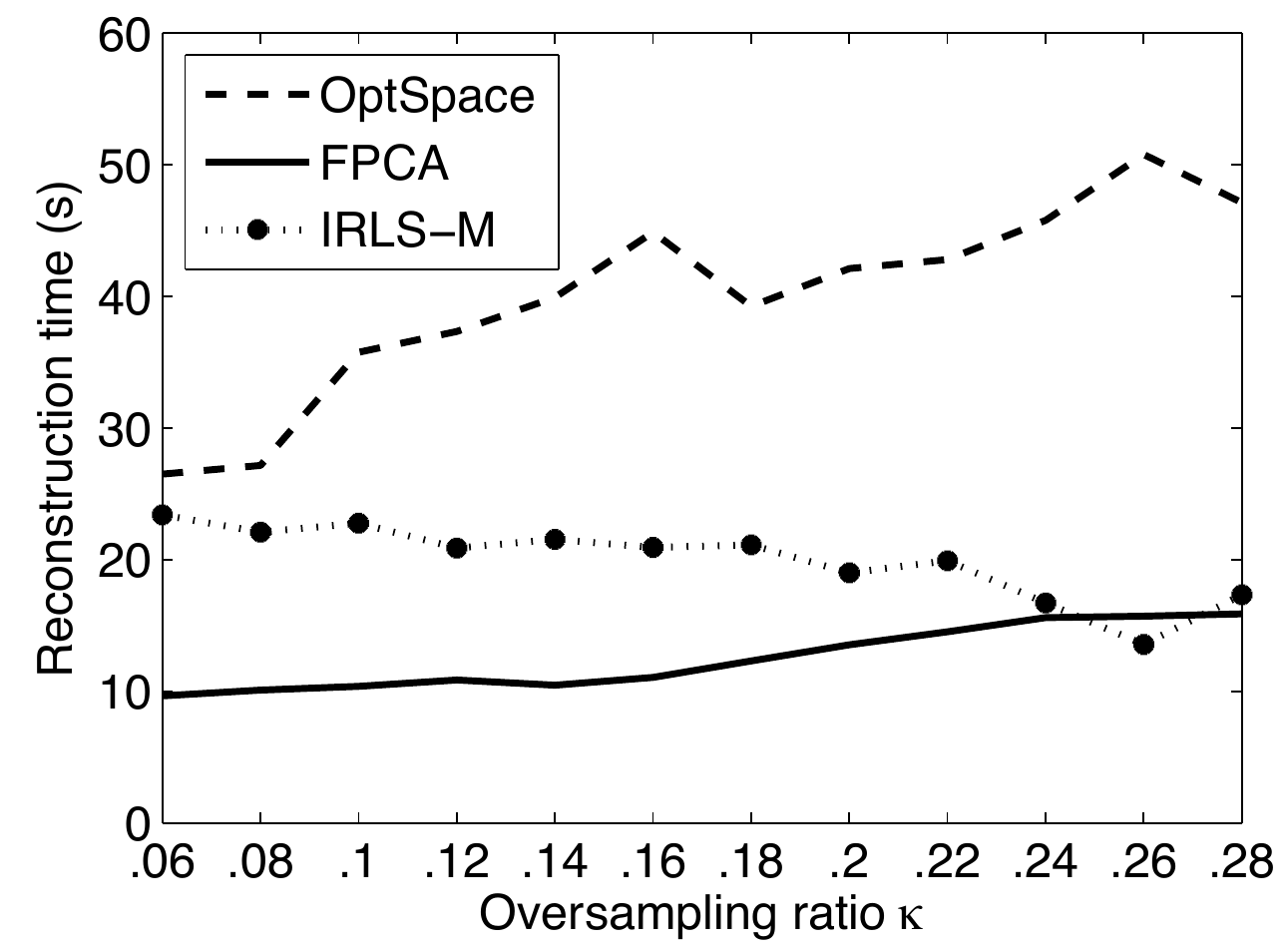} \label{1:c} 
}
\caption{Comparison of IRLS-M, FPCA, and OptSpace over a class of randomly-generated rank-$10$ matrices.}
\medskip
\end{figure} 

\begin{figure}[h]
\label{fig:2}
\subfigure[Relative error (with respect to Frobenius norm)]{
\includegraphics[scale=.52]{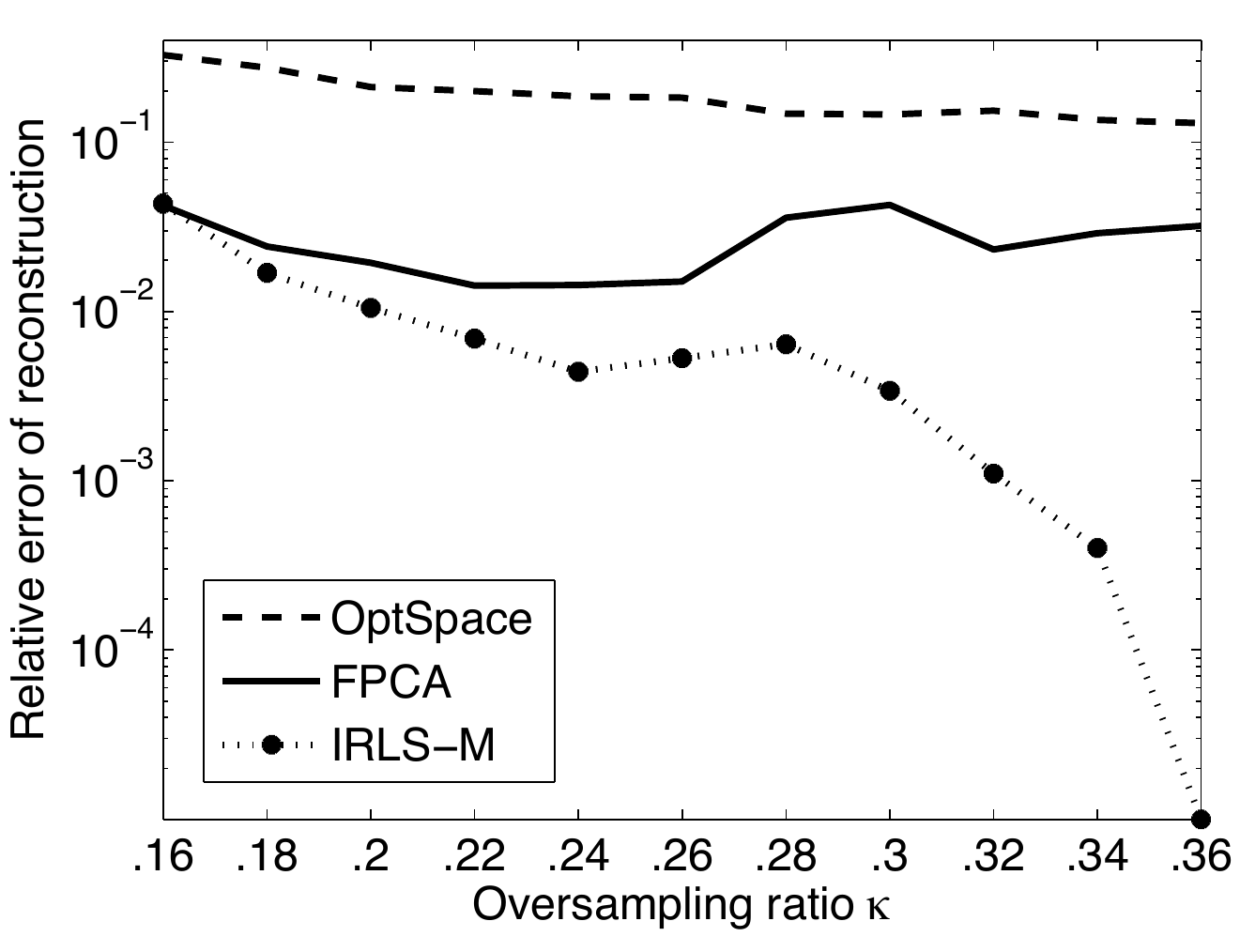} \label{2:a} 
}
\subfigure[Reconstruction time (s)]
{\includegraphics[scale =.52] {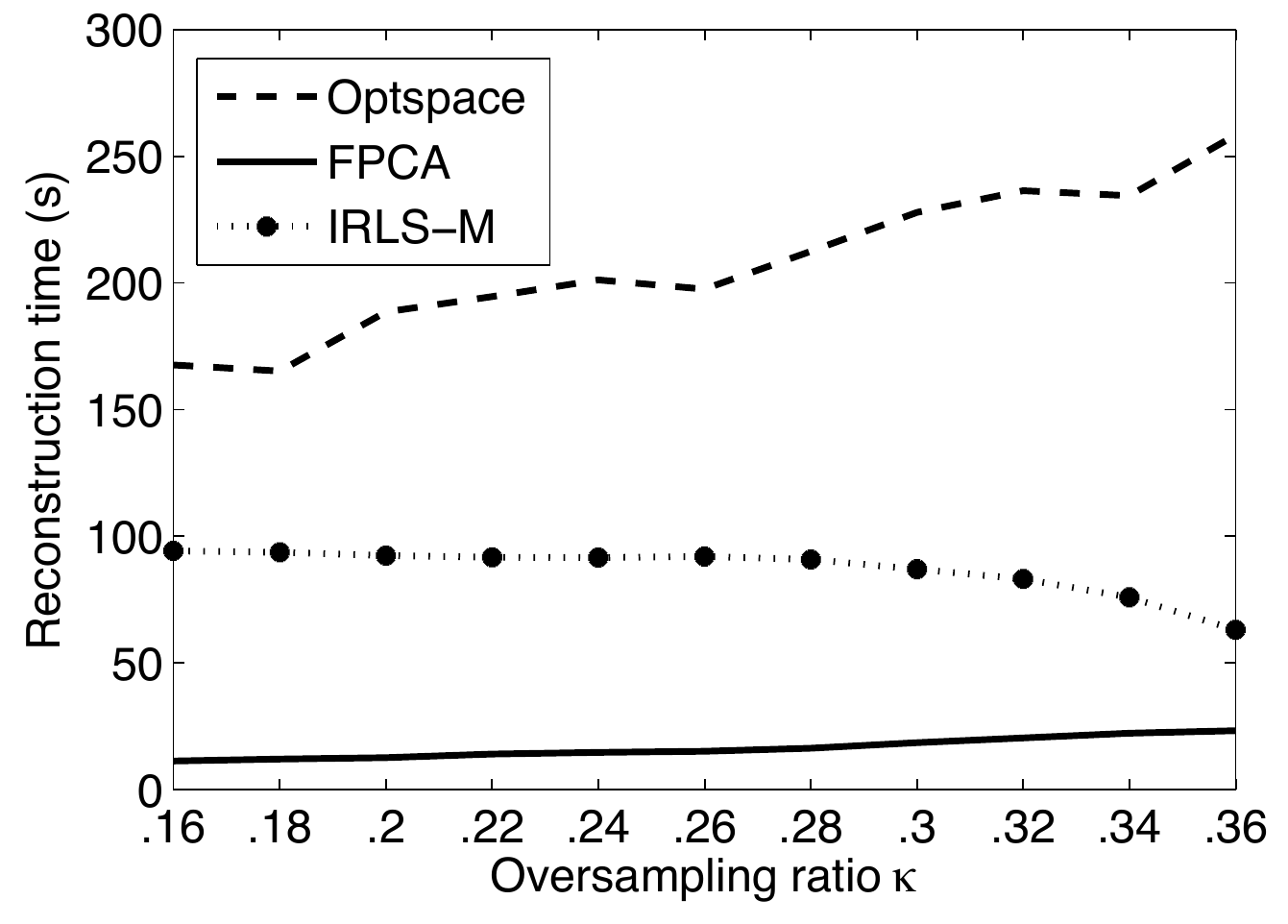} \label{2:c} 
}
\caption{Comparison of IRLS-M, FPCA, and OptSpace over a class of randomly-generated rank-$30$ matrices.}
\medskip
\end{figure}


\paragraph{1} In Figure \ref{fig:1}$.1$, we generate rank $k= 10$ matrices $X$ of dimensions $n = p = 500$, and of the form $X = U \Sigma V^*$, where the rows of $U \in M_{k \times n}$ and of $V \in M_{k \times n}$ are drawn {\bnew independently} from the multivariate standard normal distribution, and the elements of the diagonal matrix $\Sigma$ are chosen independently from a standard normal distribution (note that $U$, $\Sigma$, and $V$ do not comprise a singular value decomposition, although this process does generate matrices of rank $k=10$).  We plot out of $50$ trials (a) the average relative error $\| X - \widetilde{X} \|_{F} / \| X \|_{F}$ and (b) the average running time in reconstructing such matrices from $m = \kappa n p $ randomly chosen observed entries (matrix completion).  To produce Figure \ref{fig:2}$.2$, we repeat the same procedure, except with the rank of the underlying matrices now fixed to be $k=30$.

\medskip

\paragraph{2} Figures \ref{fig:3}$.3$ and \ref{fig:4}$.4$ are analogs of Figures \ref{fig:1}$.1$ and \ref{fig:2}$.2$ respectively, generated by matrices $X$ of rank $k = 10$ and $30$ and of dimension $n=p = 500$ using the same model.  But now the observed entries $\bar{X}_{i,j} = X_{i,j} + .1 {\cal N}_{i,j}$ are corrupted by additive noise; the ${\cal N}_{i,j}$ are independent and identically distributed standard normal random variables.  

\begin{figure}[h]
\label{fig:3}

\subfigure[Relative error (with respect to Frobenius norm)]{
\includegraphics[scale=.52]{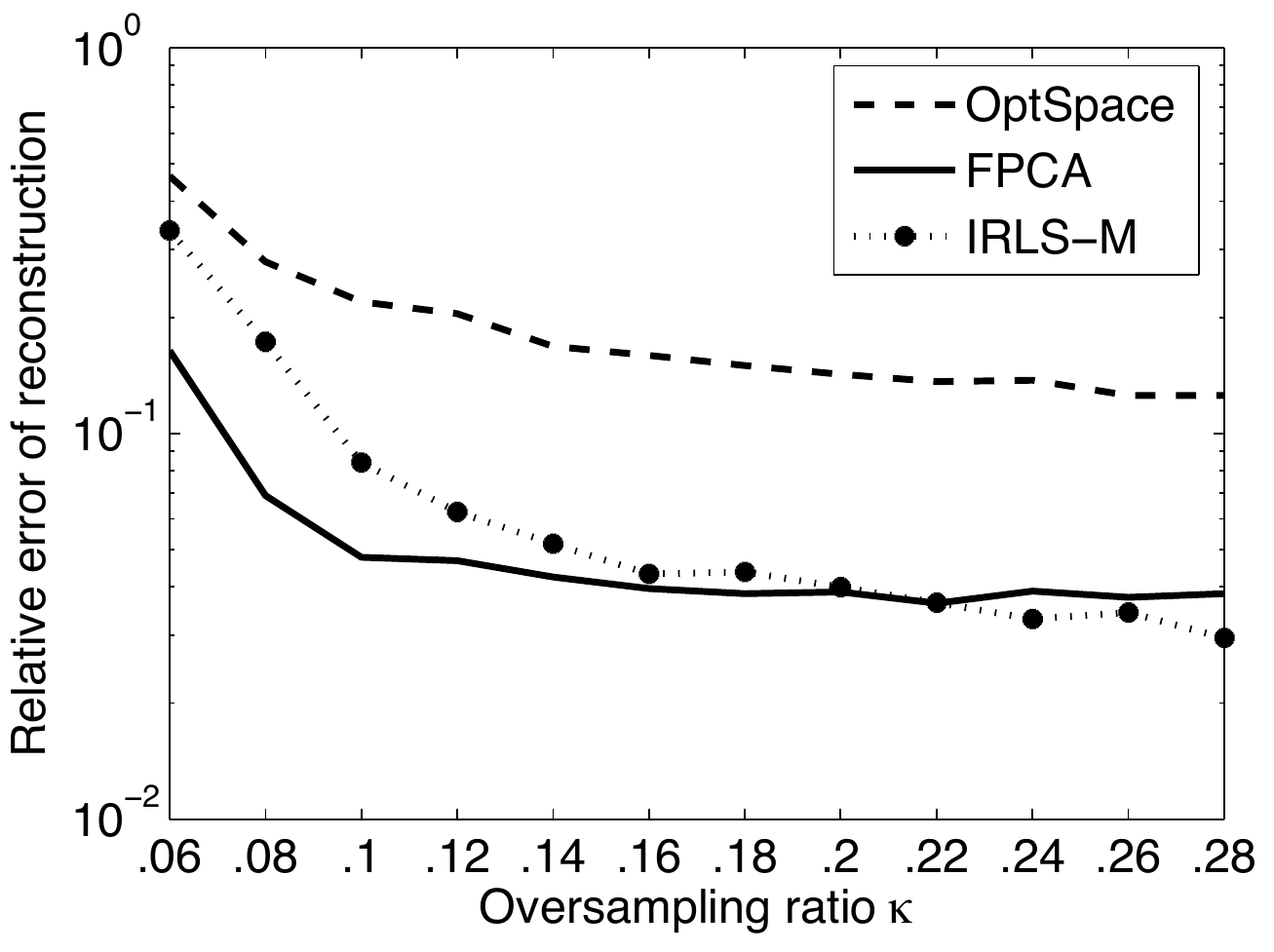} \label{3:a} 
}
\subfigure[Reconstruction time (s)]{ 
\includegraphics[scale =.52] {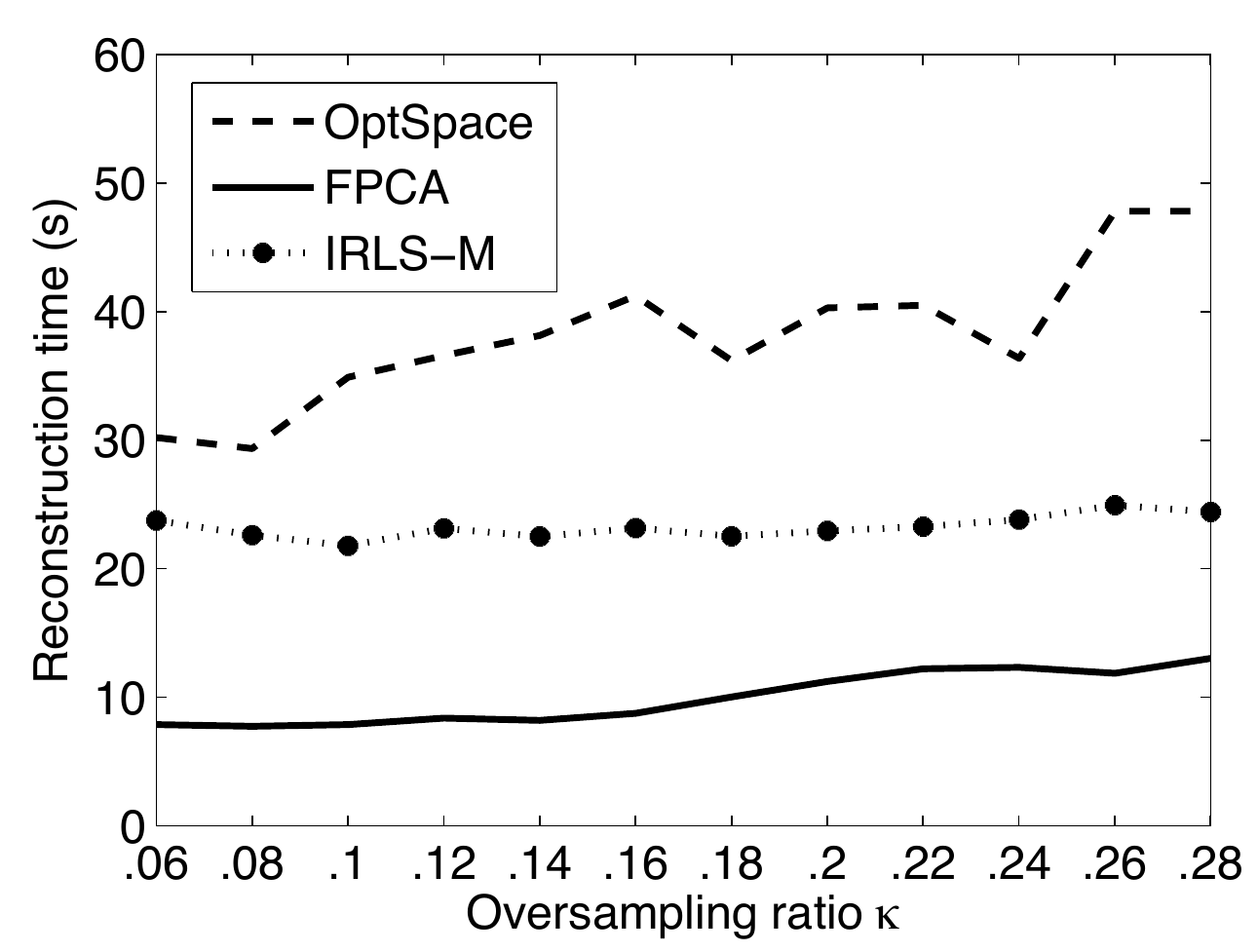} \label{3:b}
}

\caption{Comparison of IRLS-M, FPCA, and OptSpace over a class of randomly-generated rank-$10$ matrices subject to measurement noise.}
\medskip
\end{figure} 

\begin{figure}[h]
\label{fig:4}

\subfigure[Relative error (with respect to Frobenius norm)]{
\includegraphics[scale=.52]{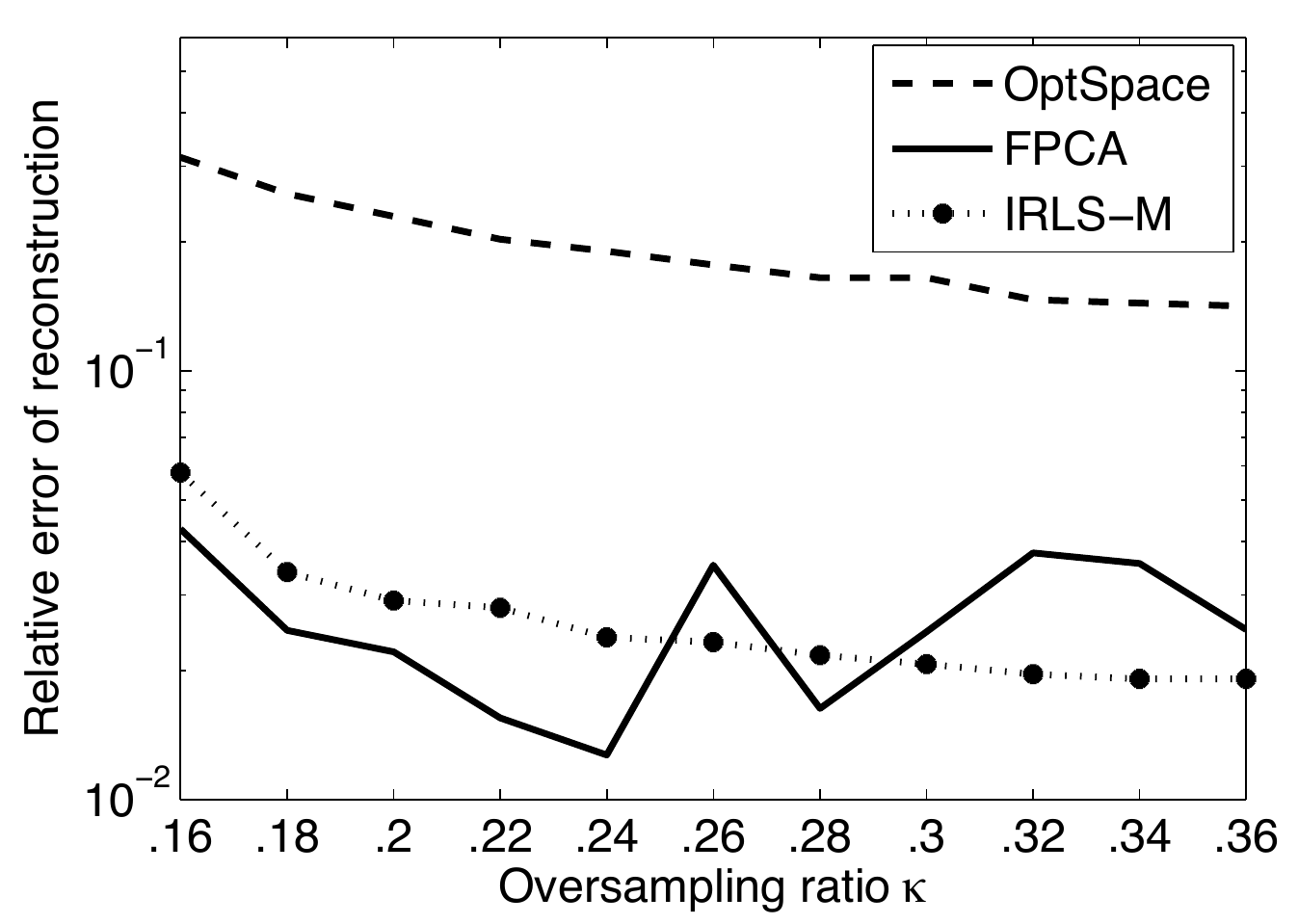} \label{4:a} 
}
\subfigure[Reconstruction time (s)]{ 
\includegraphics[scale =.52] {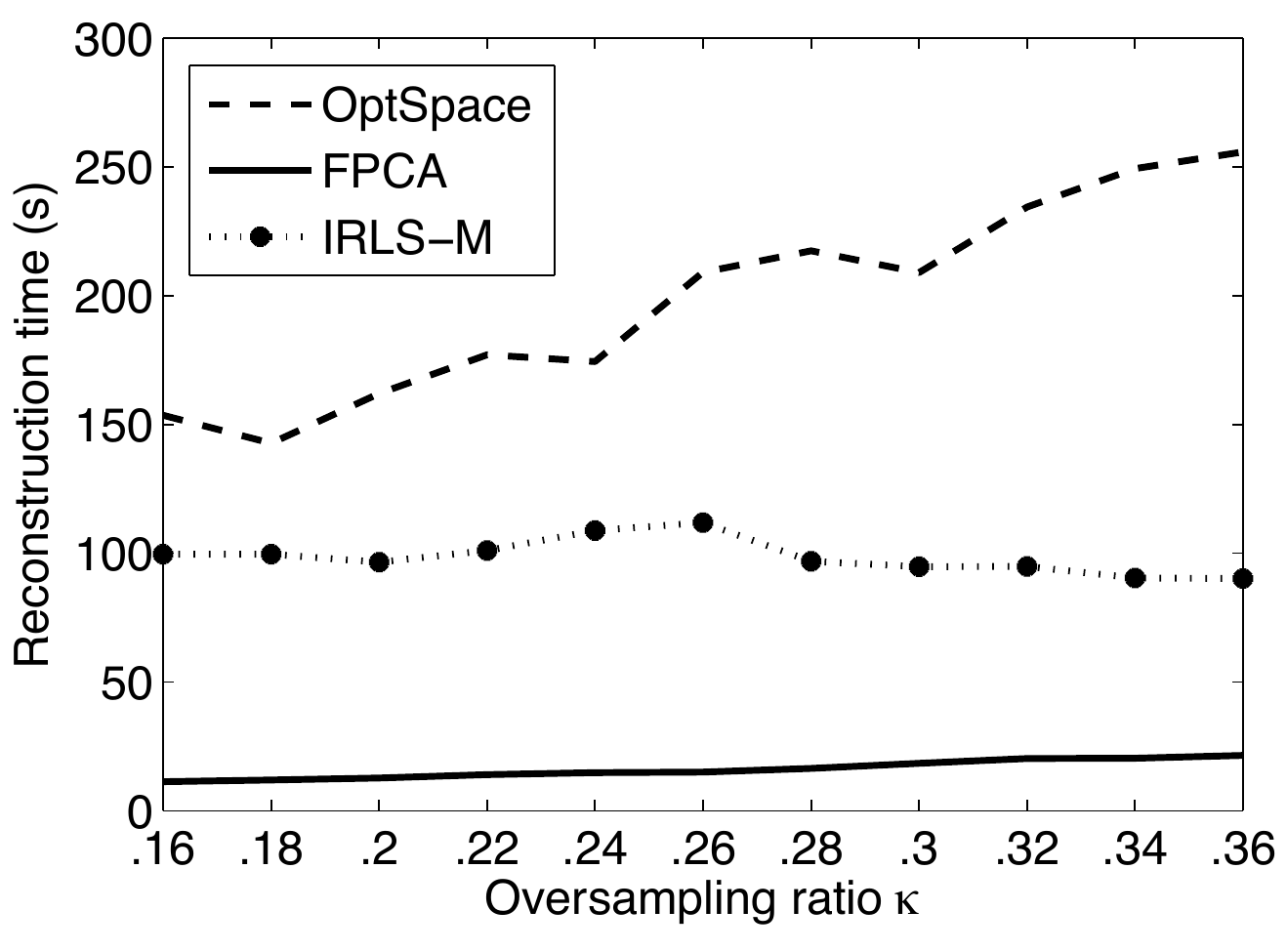} \label{4:b}
}

\caption{Comparison of IRLS-M, FPCA, and OptSpace over a class of randomly-generated rank-$30$ matrices subject to measurement noise.}
\medskip
\end{figure}


\paragraph{3}  In Figures \ref{fig:5}$.5$ and \ref{fig:6}$.6$ we use IRLS-M, FPCA, and OptSpace to reconstruct grayscale images from partial pixel 
measurements.  We are not suggesting that matrix completion algorithms should be used for image inpainting, but that we use images as a more realistic {\nnew{(nonrandom)}} set of matrices which are well-approximated by low-rank matrices.  In general, grayscale images correspond to matrices with sharply-decaying singular values.  In all experiments, the observed pixel measurements are chosen independently from the uniform distribution.   The displayed images were constructed by running each algorithm using the partial pixel measurements, and then thresholding all negative pixel values to be zero.

%
%

\begin{figure}[h]
\label{fig:5}

\medskip
\begin{center}
\subfigure[Original image]{ \includegraphics[scale=.3]{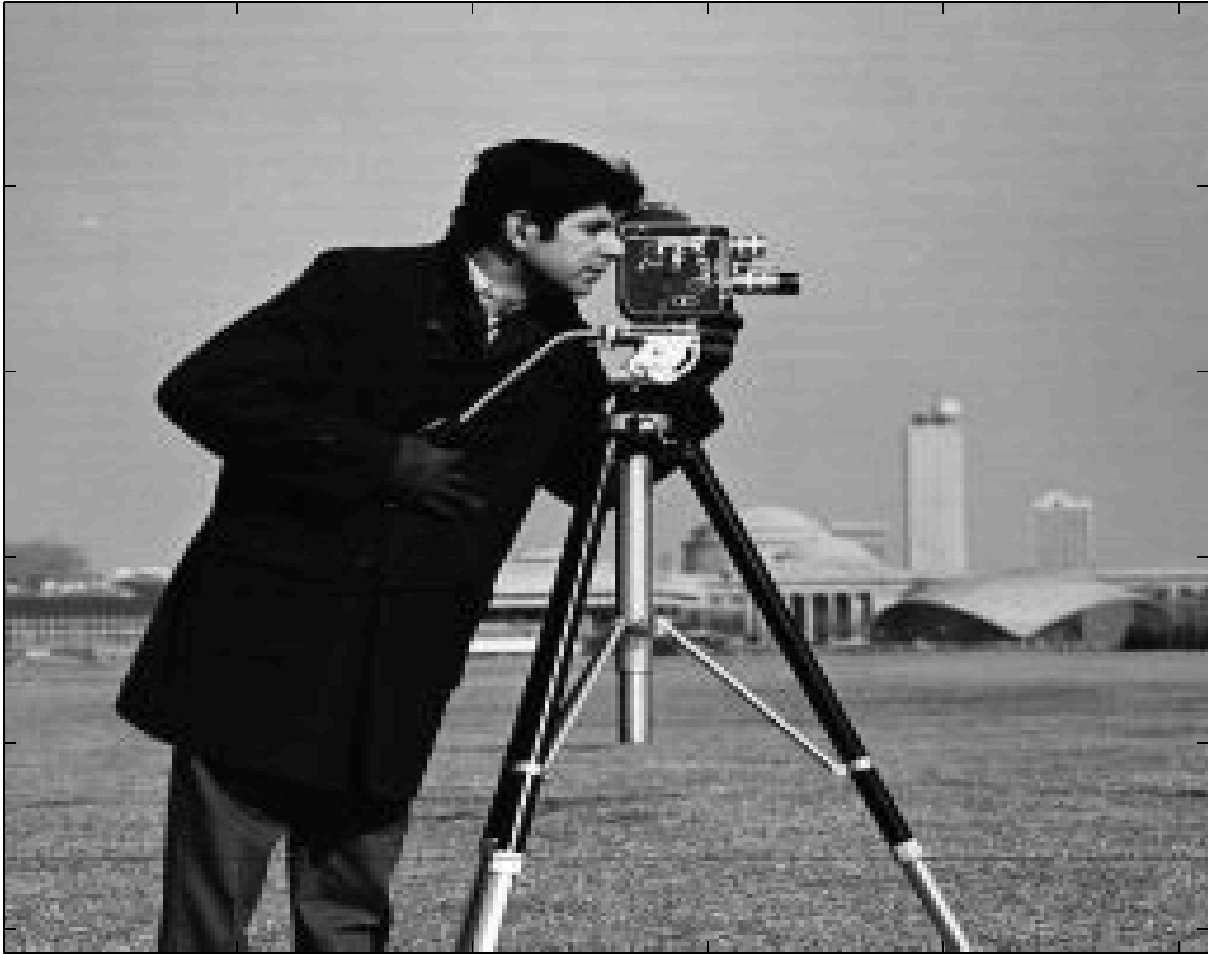} \label{5:a} }
\subfigure[Best rank-$16$ approximation]{ \includegraphics[scale =.3] {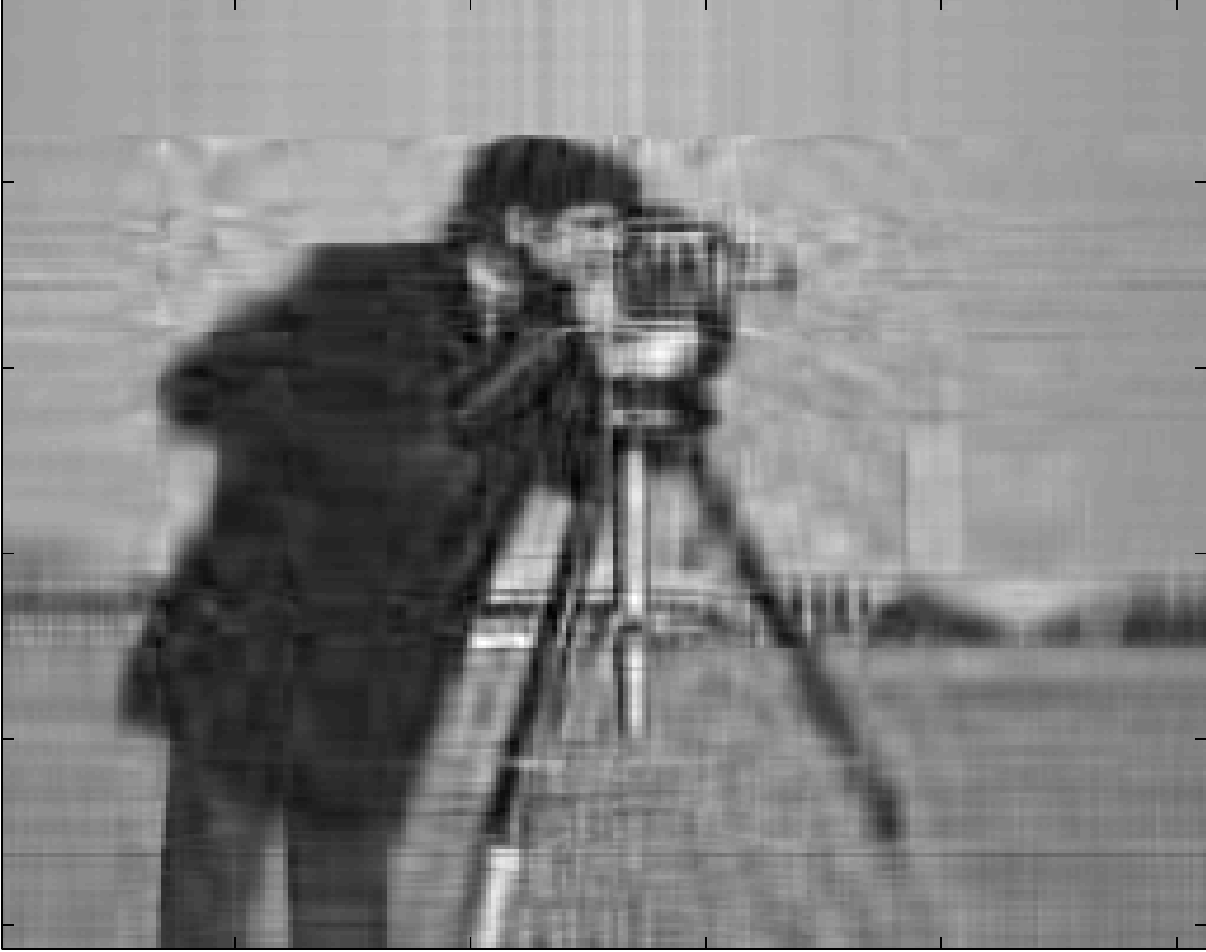} \label{5:b} }
\subfigure[$50 \%$ observed pixels]{ \includegraphics[scale =.3] {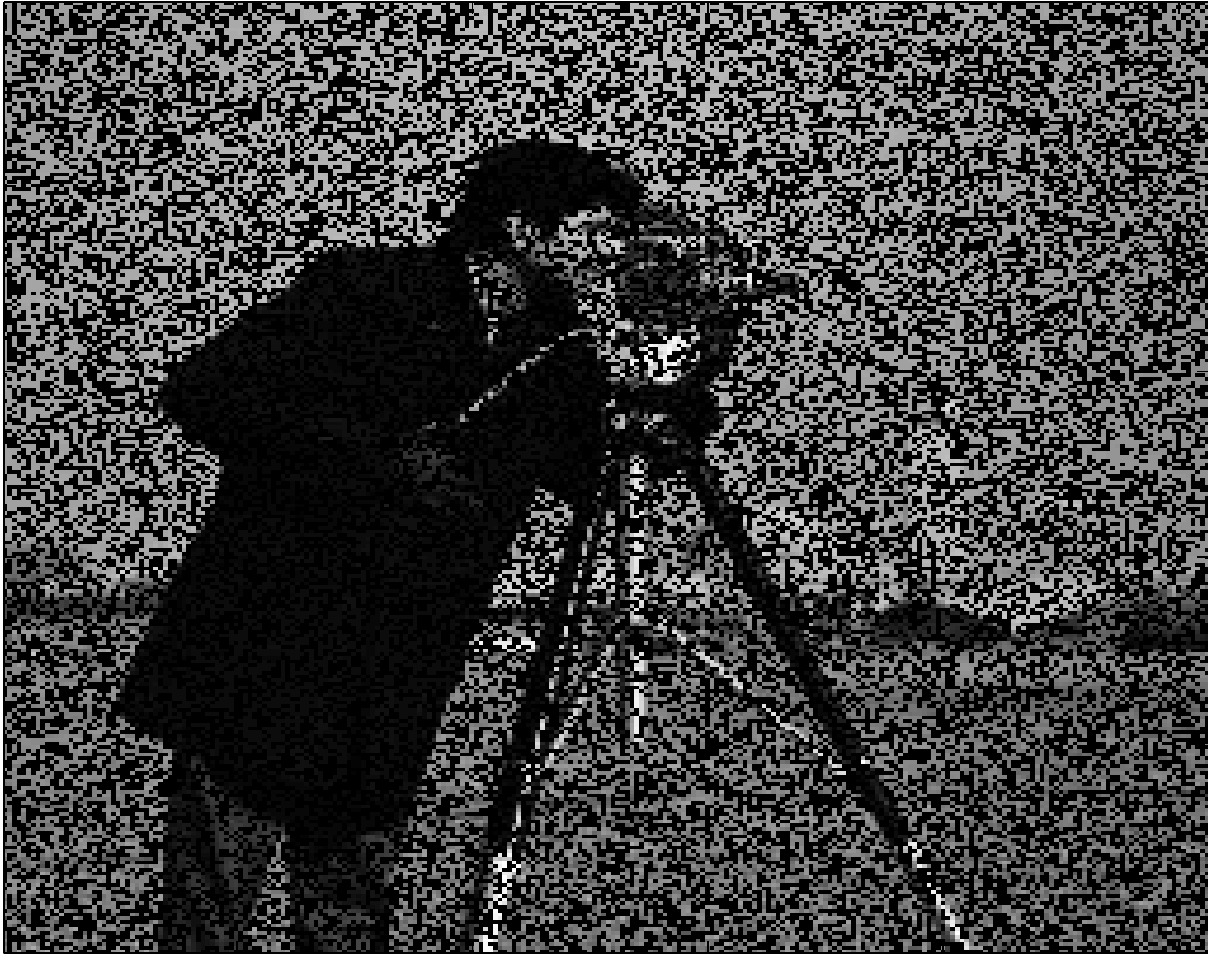} \label{5:c} }
\subfigure[IRLS-M reconstruction]{ \includegraphics[scale=.3]{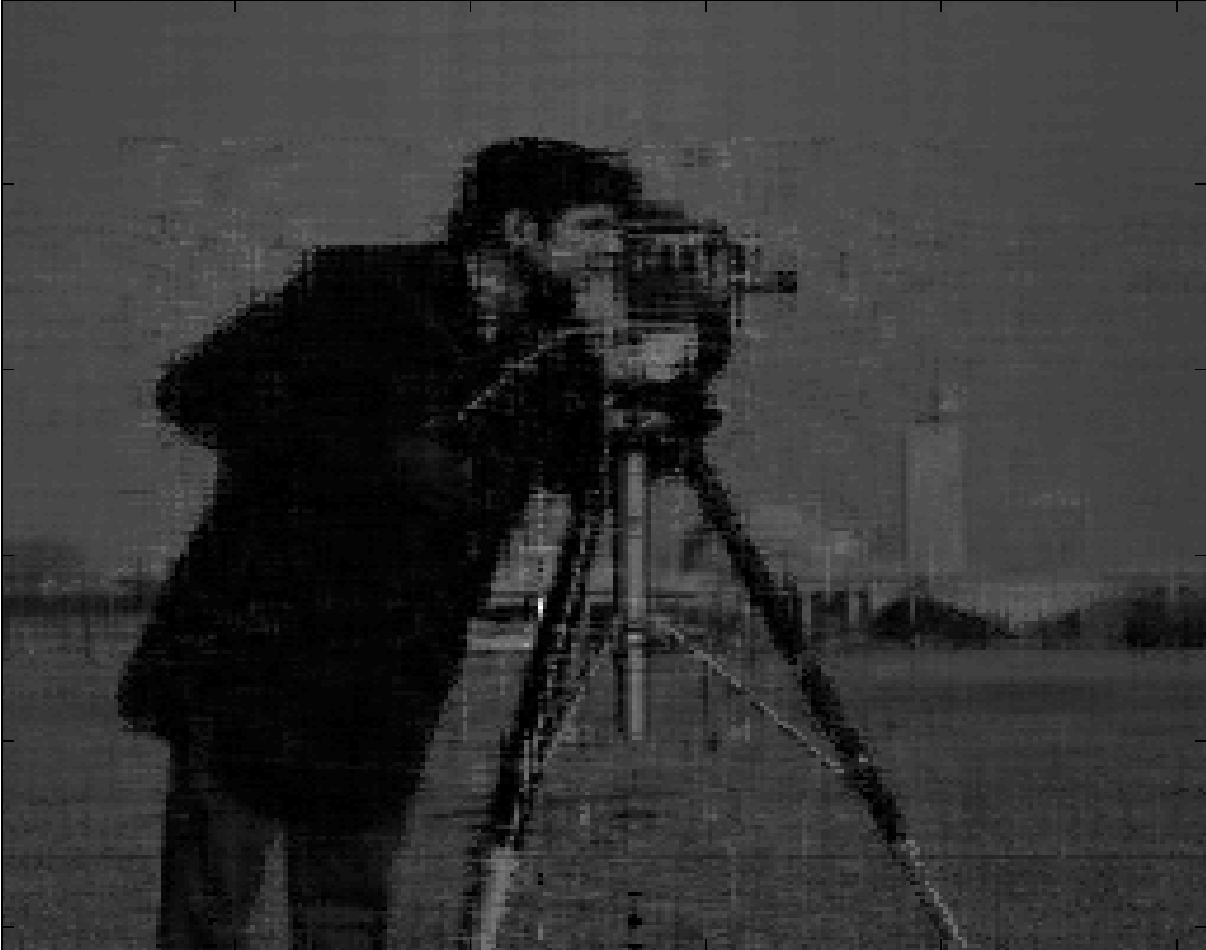} \label{5:d} }
\subfigure[FPCA reconstruction]{ \includegraphics[scale =.3] {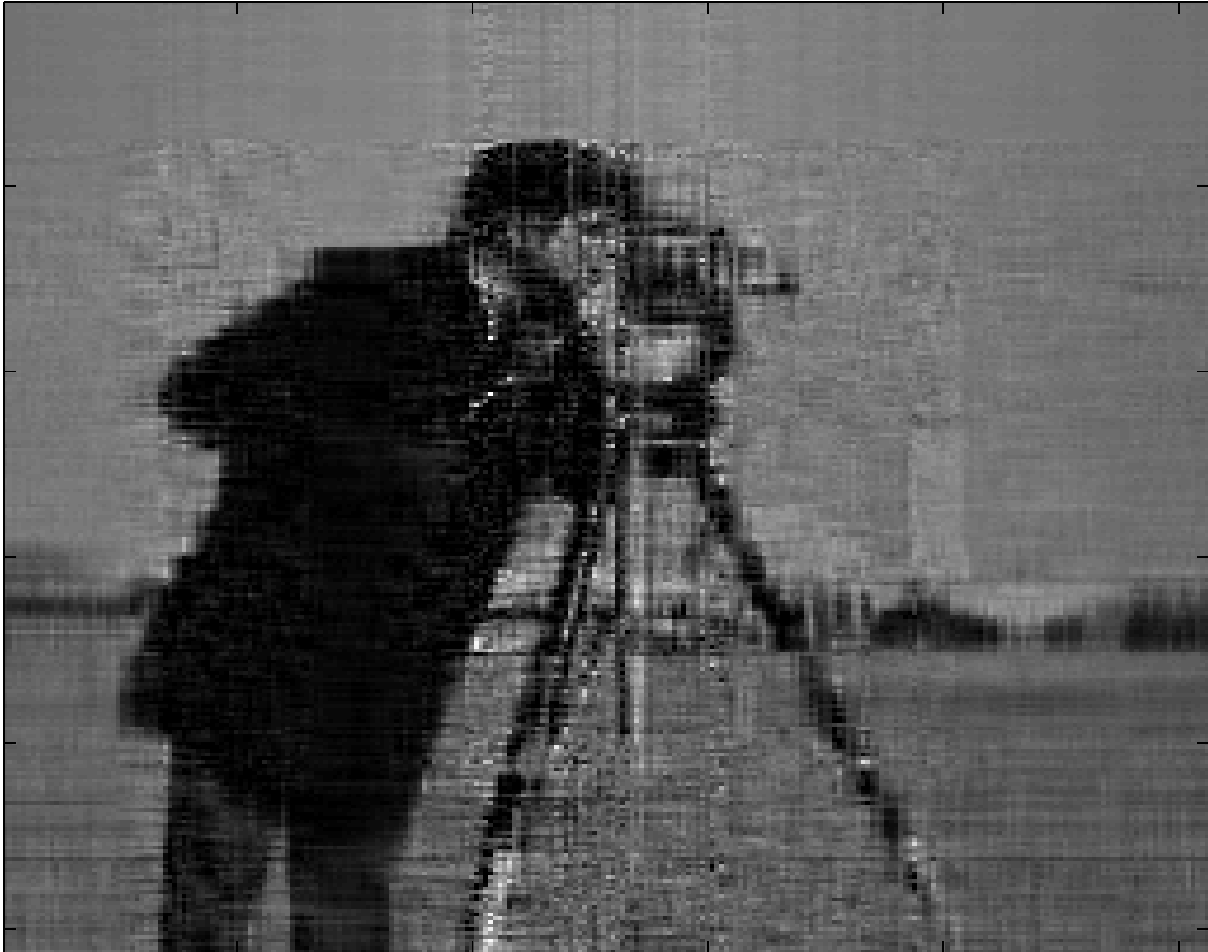} \label{5:e} }
\subfigure[OptSpace reconstruction]{ \includegraphics[scale =.3] {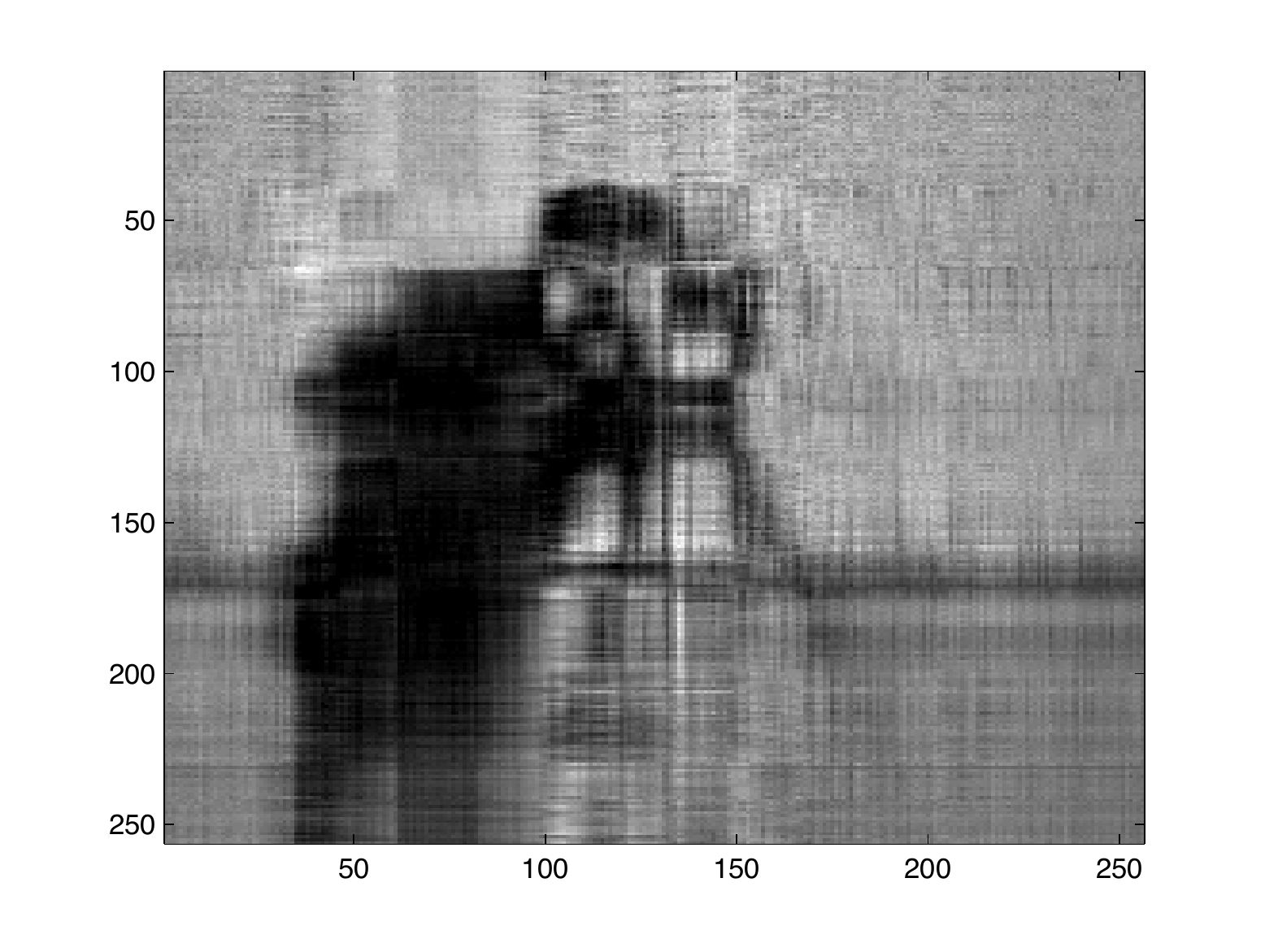} \label{5:f} }
 \\
 \caption{Figure \ref{5:a} is the original $256 \times 256$ Camera man image.  Figure \ref{5:b} is its best rank-$16$ approximation. Figure \ref{5:c} displays the $50\%$ randomly-distributed partial pixel measurements.  Figures \ref{5:d}, \ref{5:e}, and \ref{5:f} are the result of applying  IRLS-M, FPCA, and OptSpace, respectively, using these partial measurements and with input rank $r = 16$, followed by thresholding all negative pixel values to zero.  The relative error (with respect to the Frobenius norm) of the best rank-$16$ approximation, IRLS-M, FPCA, and OptSpace, respectively are $.1302$, $.1270$, $.1646$, and $.2157$.}
\end{center}

\end{figure}

\begin{figure}[h]
\label{fig:6}

\medskip
\begin{center}
\subfigure[Original image]{ \includegraphics[scale=.3]{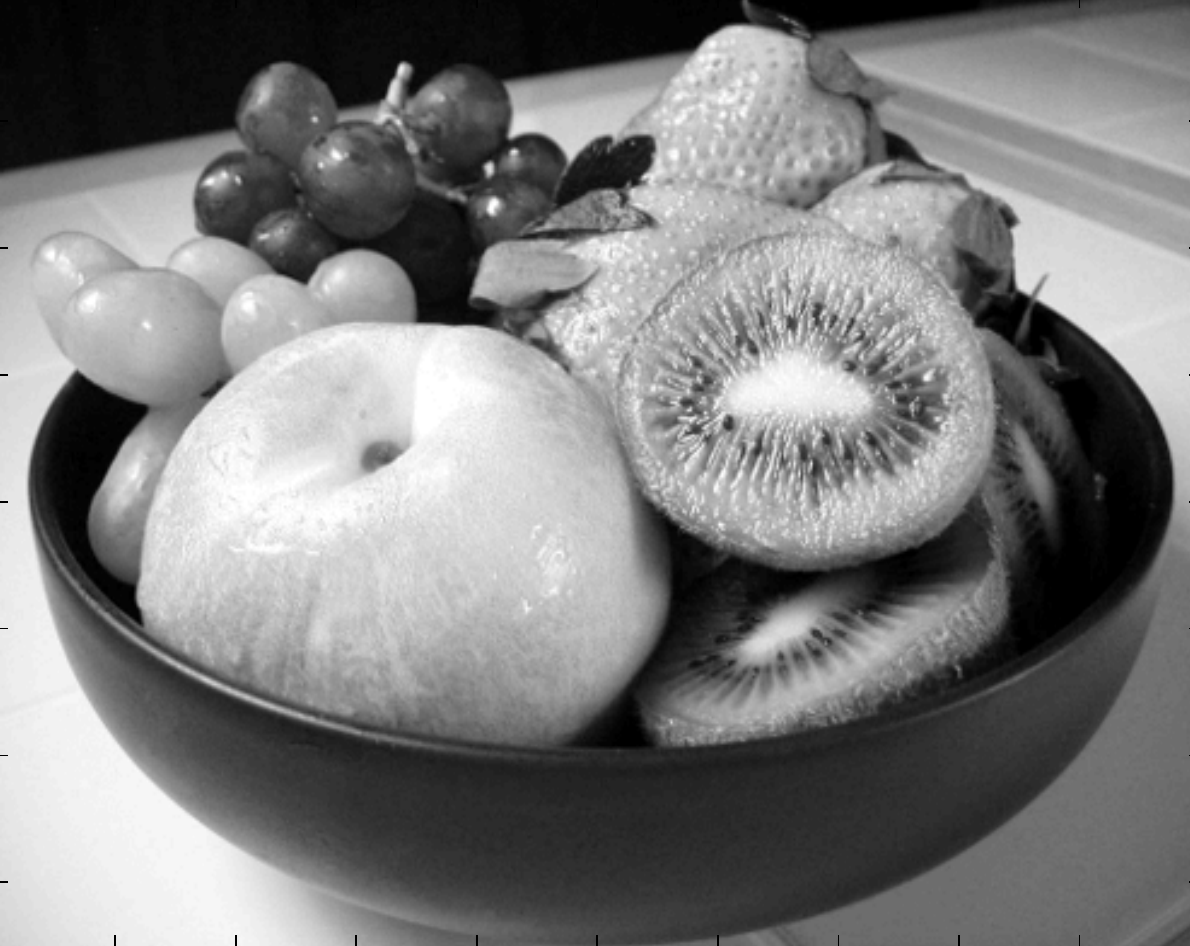} \label{6:a} }
\subfigure[Best rank-$20$ approximation]{ \includegraphics[scale =.3] {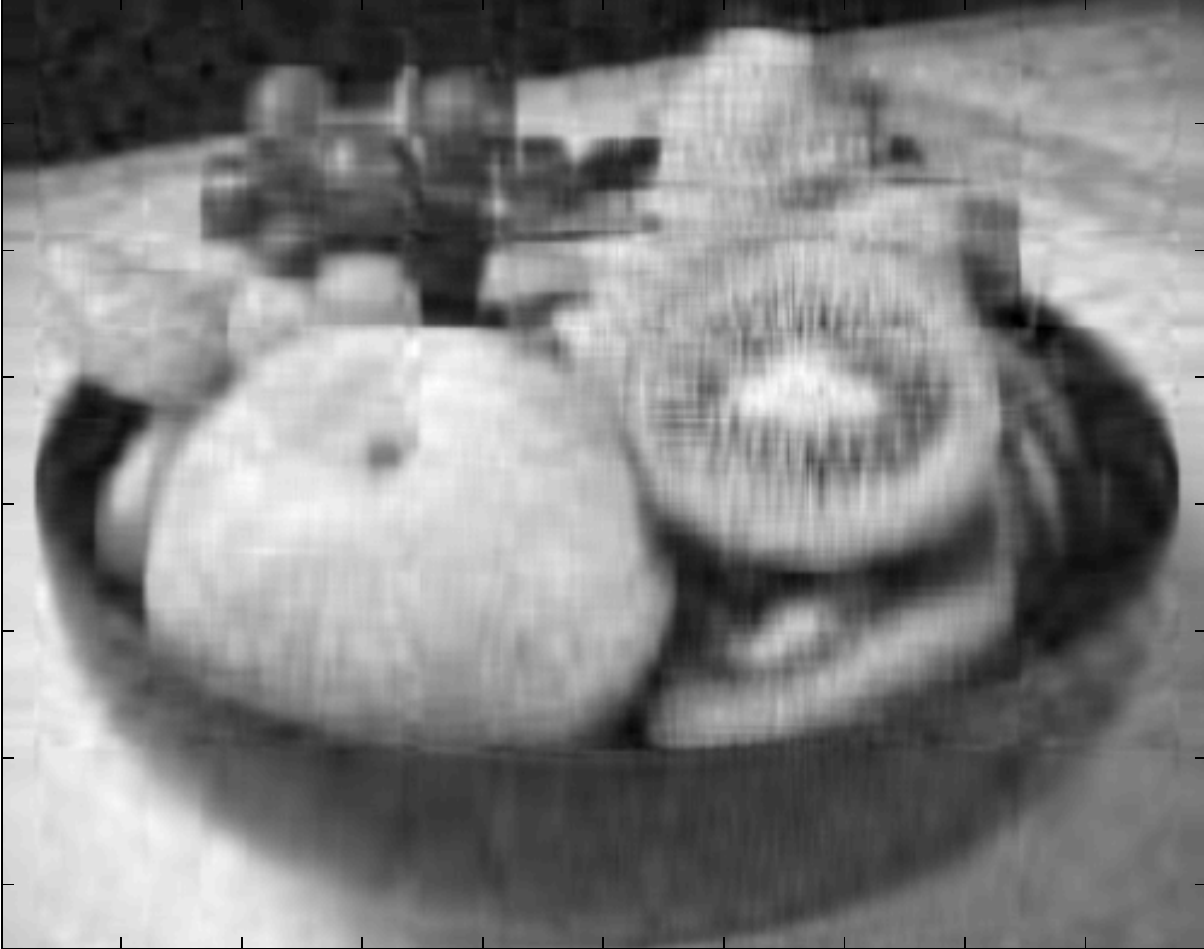} \label{6:b} }
\subfigure[$30 \%$ observed pixels]{ \includegraphics[scale =.3] {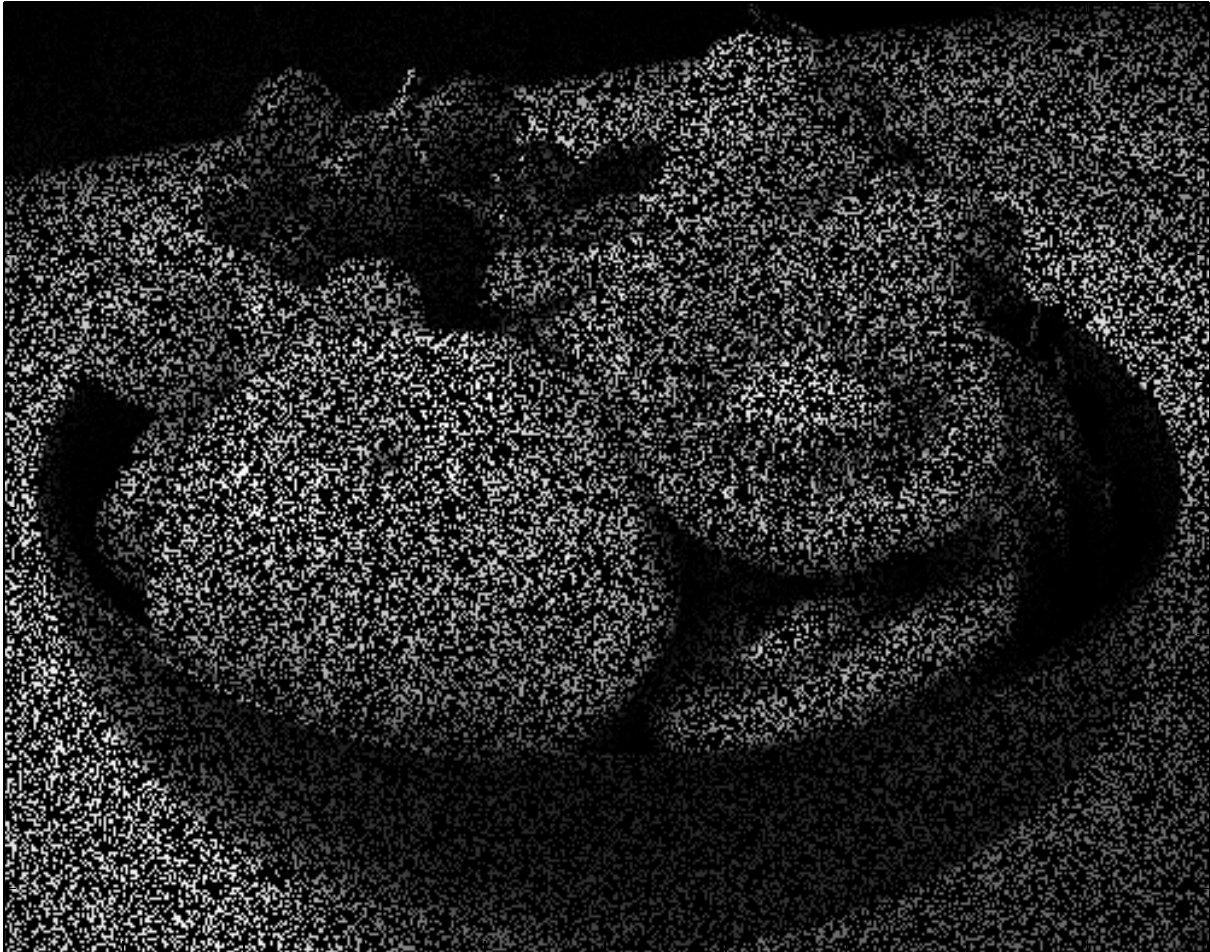} \label{6:c} }
\subfigure[IRLS-M reconstruction]{ \includegraphics[scale=.3]{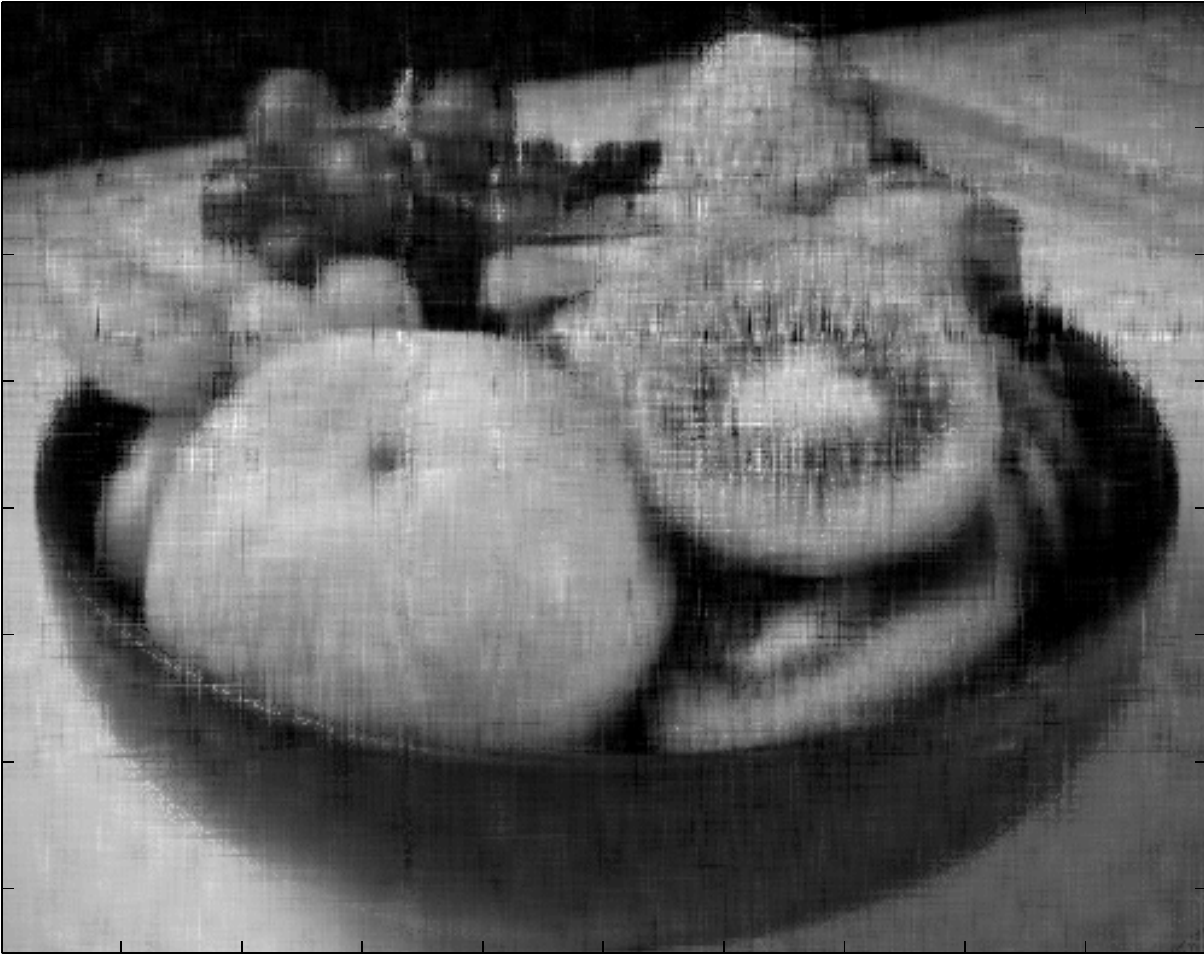} \label{6:d} }
\subfigure[FPCA reconstruction]{ \includegraphics[scale =.3] {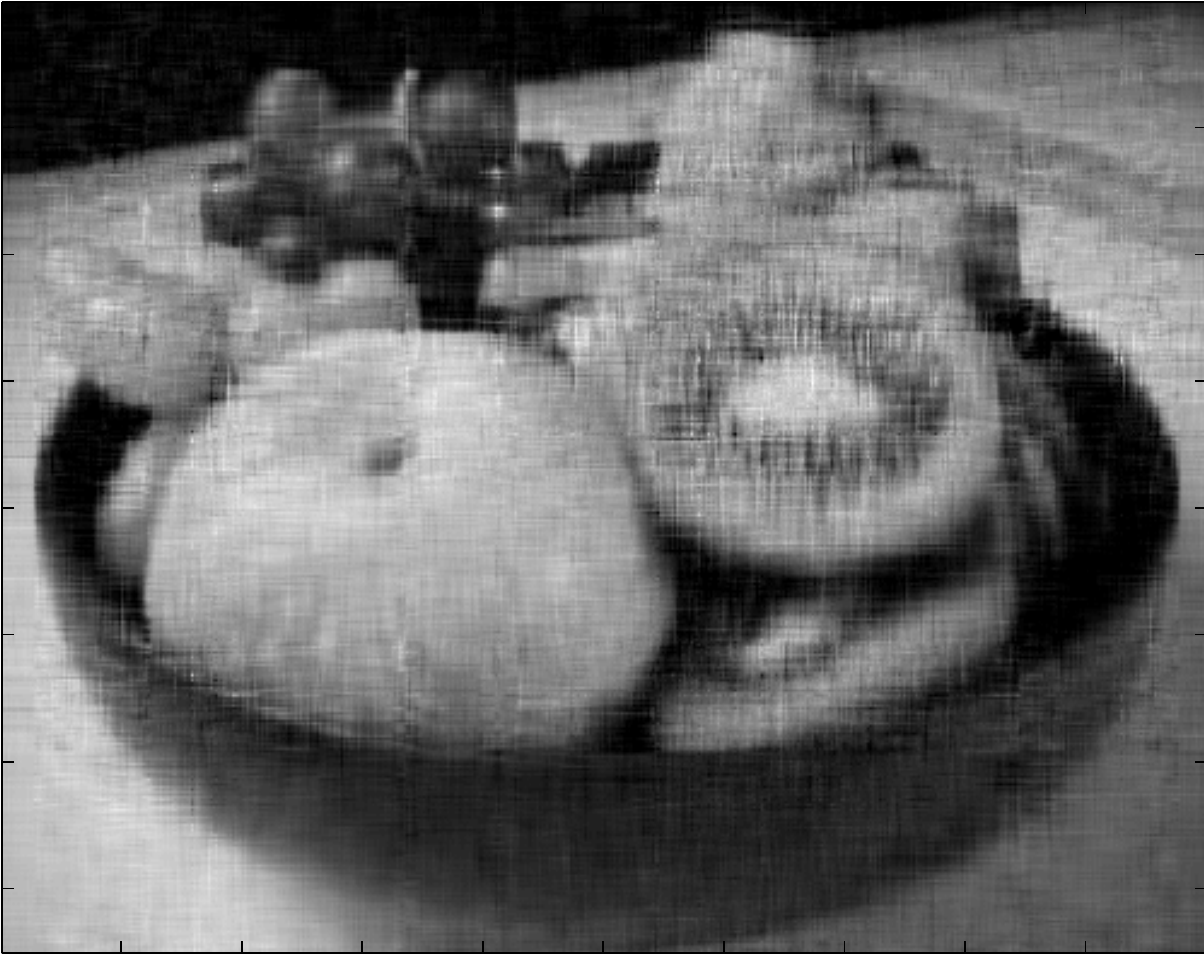} \label{6:e} }
\subfigure[OptSpace reconstruction]{ \includegraphics[scale =.3] {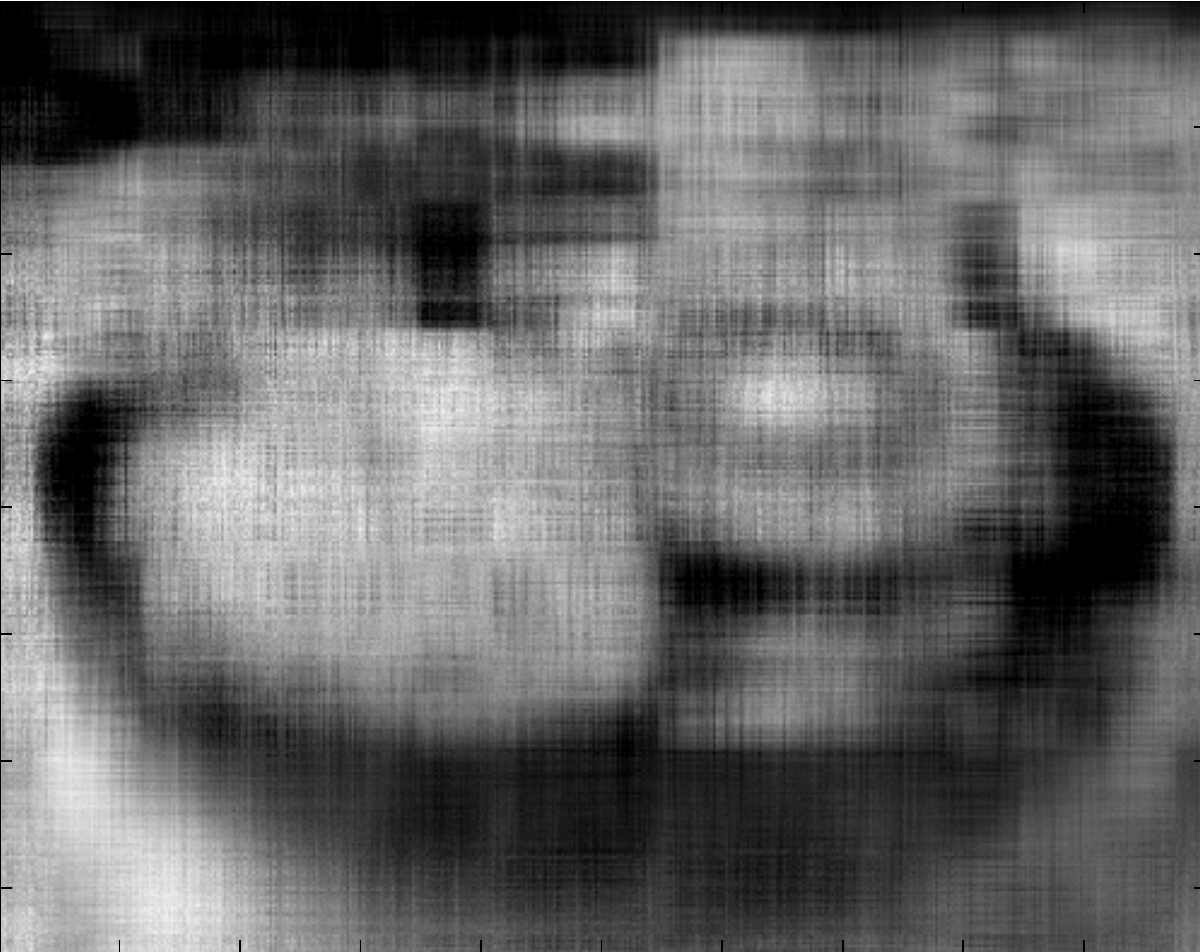} \label{6:f} }
 \\ 
 \caption{Figure \ref{6:a} is the original $375 \times 500$ Fruit Bowl image.  Figure \ref{6:b} is its best rank-$20$ approximation.  Figure \ref{6:c} displays the $30\%$ randomly-distributed partial pixel measurements.  Figures \ref{6:d}, \ref{6:e}, and \ref{6:f} are the result of applying IRLS-M, FPCA, and OptSpace, respectively, using these partial measurements with input rank $r = 20$, followed by thresholding all negative pixel values to zero. The relative error (with respect to the Frobenius norm) of the best rank-$20$ approximation, IRLS-M, FPCA, and OptSpace, respectively are $.0955$, $.1066$, $.1131$, and $.2073$.}

\end{center}
\end{figure}

\medskip

Our results indicate that IRLS-M outperforms OptSpace for reconstructing low-rank and approximately low-rank matrices whose singular values exhibit 
{\nnew{fast}} decay.  Our results indicate that IRLS-M is competitive with FPCA in accuracy and superior to FPCA in accuracy once the number of 
measurements is sufficiently high.  Nevertheless, IRLS-M can be slower in speed than FPCA.  Figures \ref{fig:5}$.5$ and \ref{fig:6}$.6$ suggest that IRLS-M
has the potential to reconstruct approximately low-rank matrices whose singular values exhibit sharp decay more accurately than FPCA.

\medskip

\begin{remark}
{\em
Much of the literature on matrix completion emphasizes the difference between ``hard" matrix completion problems  and ``easy" matrix completion problems, as determined by the parameters $n, p, k$, and $m$; generally speaking, ``hard" problems correspond to the situation where the number of measurements $m$ is close to the theoretical lower bound $m = C k \max\{n,p\} \log^2(\max\{n,p\})$.   For more details, we refer the reader to \cite{kemooh09}.  We note that the range of parameters $n=p, m,$ and $k$ considered in Figure \ref{fig:1}$.1$ and Figure \ref{fig:1}$.2$ corresponds to a transition from  ``hard"  to ``easy" problems. 
}
\end{remark}

\medskip

\begin{remark}
{\em
In all above numerical experiments, we ran the IRLS-M algorithm with parameter $\gamma=1$ as this greedy choice gives the best speed of convergence in practice, even though we could only prove convergence of the algorithm  for $\gamma=1/n$ as in Proposition \ref{prop:mini}.  We stopped the IRLS-M iterations when either $200$ iterations have been reached or $| \varepsilon_{\ell} - \varepsilon_{\ell-1} |/\varepsilon_{\ell} \leq 10^{-6}$ for more than $50$ consecutive iterations $\ell$.  These are the default parameters on the Matlab code we have made available online.\footnote{ \tt http://www.cims.nyu.edu/$\sim$rward/IRLSM.zip} {\nnew{(One part of the code is implemented in C because Matlab prohibited fast implementation of the sequence of problems in\eqref{woodbury3} below.)}}

We ran the OptSpace algorithm using the publicly-available {\nnew{Matlab}} code by the authors\footnote{\tt http://www.stanford.edu/$\sim$raghuram/optspace/code.html} and using the default parameters provided therein.
Finally, we ran the FPCA algorithm using publicly-available {\nnew{Matlab}} code by the authors\footnote{\tt http://www.columbia.edu/$\sim$sm2756/FPCA.htm}. We found that the running time of this algorithm improved considerably when we changed the settings to the ``easy" problem settings, even when the default setting was ``hard", and we report the performance of FPCA with this modification. 
}
\end{remark}
\medskip
\begin{remark}
{\em
All three algorithms are ``rank-aware", meaning that the underlying rank $r$ is provided as input.   In practice most matrices of interest are only approximately low-rank, and the best choice of $r$ is not known a priori.    In the image reconstruction examples in Figures \ref{fig:5}$.5$ and \ref{fig:6}$.6$, we picked a rank $k$ and number of measurements $m$ for each image from a collection of several different attempts based on visual inspection.  More generally, a systematic way to select the rank is using \emph{cross-validation}, {\nnew{see \cite{rw09} for details}}. 

 }
\end{remark}

\section{Computational Considerations}

\subsection{Updating W}  In order to perform the weight update, 
\[
\label{eq:weightup}
W^{\ell} = U^{\ell} ( \Sigma^{\ell}_{\varepsilon_\ell} )^{-1} (U^{\ell} )^*,
\]
one does not need to compute all $n$ singular vectors of $W^{\ell}$, but rather only the first $r$, where $r$ is the largest integer for which $\sigma_{r} > \varepsilon_{\ell}/\gamma$.   This is because $W^{\ell}$ decomposes as the sum of a rank $r$ matrix and a perturbation of the identity.  
In practice, it is sufficient to take $\gamma = 1$ (this was done in the numerical experiments of the last section); in this case, $r$ is on the order of $K$, and $W^{\ell}$ decomposes into the sum of a low-rank matrix and a perturbation of the identity.

\subsection{Updating X}
Consider now the matrix update, 
$$
\bar{X} = \argmin_{\A(X) =\M} \mathcal J(X,W).
$$
In the matrix completion setting, the operator $\A$ is {\it separable}, i.e., acts columnwise, 
$$
\A(X) = (\A_1 X_1,\dots, \A_p X_p) = (\M_1, \dots, \M_p) = \M \in \mathbb C^m,
$$
where $X_i$, $i=1,\dots,p$,  are the columns of $X$, and $\A_i$, $i=1,\dots, p$ are suitable matrices acting on the vectors $X_i$. In this case
$
\| W^{1/2} X \|_F^2 = \sum_{i=1}^p \| W^{1/2} X_i \|_{\ell_2^n}^2,
$
so that
$$
\bar{X} = \argmin_{\A(X) =\M} \mathcal J(X,W) \Leftrightarrow \bar{X}_i = \argmin_{\A_i X_i =\M_i} \| W^{1/2} X_i \|_{\ell_2^n}^2, \quad i=1,\dots,p.
$$
Note that
\begin{equation}
\label{col_update}
\bar{X}_i = W^{-1} \A_i^* (\A_i W^{-1} \A_i^*)^{-1} \M_i,  \quad i=1,\dots,p,
\end{equation}
where the actions here are simply matrix multiplications.  To speed up the computation of $(\A_i W^{-1} \A_i^*)^{-1} \M_j $, one may easily parallelize the independent column updates \eqref{col_update}. 

\subsubsection{The Woodbury matrix identity}\label{woodsec} The updates in \eqref{col_update} can be further expedited by exploiting the \emph{Woodbury matrix identity}, first introduced in
\cite{wo50}. It states that for dimensions $r \leq n$, if $A \in M_{n \times n}$, $U \in M_{n \times r}$, $C \in M_{r \times r}$, and $V \in M_{n \times r}$, then
\begin{equation}
\label{WMI}
(A + U C V^*)^{-1} = A^{-1} - A^{-1} U(C^{-1} + V^* A^{-1} U)^{-1} V^* A^{-1},
\end{equation}
assuming all of the inverses exist.  

We may write the inverse weight matrix $W^{-1}$ as the sum of a diagonal rank-$r$ matrix and a perturbation of the identity:
$$
W^{-1} = \varepsilon I + U \widetilde{\Sigma}_{[r]} U^*
$$
where $\widetilde{\Sigma}_{[r]} =(\Sigma - \varepsilon I)_{[r]} =$ diag$(\max\{\sigma_j - \varepsilon, 0 \})$.  
If $\gamma = 1$ is set in the algorithm, then $W^{-1}$ decomposes into the sum of a $K$-rank matrix and a perturbation of the identity during iterations $\ell$ where $\varepsilon_{\ell} < \varepsilon_{\ell-1}$. The matrix $\A_i W^{-1} \A_i^*$ may then be expanded as
\begin{equation}
\label{eq:invert}
\A_i W^{-1} \A_i^* = \varepsilon \A_i \A_i^* +   \big( \A_i U \big) \widetilde{\Sigma}_{[r]} (\A_i U)^*.
\end{equation}
Recall that $\A_i \in M_{m_i \times n}$, and that $\sum_{j=1}^{p} m_i = m$.  
Setting $Q_i = \A_i \A_i^*$ and $R_i = \A_i U$, and applying the Woodbury matrix identity,
\begin{equation}
\label{wood}
( \A_i W^{-1} \A_i^* )^{-1} = \varepsilon^{-1} Q_i^{-1}\Big[I - R_i \Big( \varepsilon \widetilde{\Sigma}_{[r]}^{-1} + R_i^* Q_i^{-1} R_i \Big)^{-1} R_i^* Q_i^{-1} \Big].
\end{equation} 
As $\widetilde{\Sigma}_{[r]}$ is a diagonal matrix, its inverse is trivially computed.   In the setting of matrix completion, the operator $\A_i$ is simply a subset of rows from the $n \times n$ identity matrix, and $\A_i \A_i^{T} = I \in M_{m_i \times m_i}$, and $U_i=R_i=\A_i U\in M_{m_i \times r}$ is a subset of $m_i$ rows from the matrix $U$. Therefore, utilizing the formula \eqref{wood}, this reduces to 
\begin{eqnarray}
\label{woodbury3}
( \A_i W^{-1} \A_i^* )^{-1} &=& \varepsilon^{-1} \Big[I - U_i \big( \varepsilon \widetilde{\Sigma}_{[r]}^{-1} + U_i^* U_i\big)^{-1} U_i^*\Big],
\end{eqnarray}
and one essentially needs to solve a linear system corresponding to the matrix $\widetilde{\varepsilon \Sigma}_{[r]}^{-1} + U_i^* U_i \in M_{r \times r}$ as opposed to a system involving the inversion of a matrix in $M_{m_i \times m_i}$.  When $\gamma = 1$ and $r = K+1$, such as at iterations $\ell$ where $\varepsilon_{\ell} < \varepsilon_{\ell-1}$, the average number of measurement entries in any particular column is at least $\widehat{m} = O(k \log{np}) \gg k$, so this amounts to a great reduction in computational cost.

\section{Variational Interpretation}

\subsection{A functional of matrices and its derivatives}

In this section we show that the low-rank matrix recovery can be reformulated as an alternating minimization of a functional of matrices.   Assume $X \in M_{n \times p}$, $0\prec W=W^* \in M_{n \times n}$, and consider the functional,
\begin{equation}
\label{func}
\J(X,W) := \frac{1}{2} \left ( \| W^{1/2} X  \|_F^2 + \| W^{-1/2}\|_F^2 \right ).
\end{equation}
In the following subsections we shall prove that the IRLS-M algorithm may be reformulated as follows.

\medskip

\noindent 
\fbox{
\begin{minipage}{14cm}
{\rm {\bf IRLS-M algorithm for low-rank matrix recovery: } Initialize by taking $W^0:=I \in M_{n \times n}$.  Set $\varepsilon_0:=1$,  $K\geq k$, and $\gamma >0$. Then recursively define, for $\ell = 1,2,\dots,$
{\nnew{\begin{align}
X^{\ell}&:=\arg \min_{\A(X)=\M } \mathcal J (X, W^{\ell}), \notag\\
\label{en3}
\varepsilon_{\ell} & :=\min \left\{\varepsilon_{\ell-1}, \gamma \s_{K+1}(X^{\ell}) \right\}.
\end{align}}}
The up-date of the weight matrix $W^\ell$ follows a variational principle as well, i.e.,
\begin{eqnarray}
\label{wn2}
W^{\ell} &:=& \arg \min_{0 \prec W = W^* \preceq \varepsilon_{\ell}^{-1} I} {\cal J}(X^{\ell},W) 
\end{eqnarray}
The algorithm stops if $\varepsilon_\ell=0$; in this case, define $X^j:=X^\ell$ for 
$j>\ell$.  In general, the algorithm generates an infinite sequence 
$(X^\ell)_{\ell\in \mathbb N}$ of matrices. }
\end{minipage}
}

\subsubsection{Optimization of $\cal J$ with respect to $X$}

Let us address the matrix optimization problems and explicitly compute their solutions. For consistency of notation, we need to introduce the following {\it left-multiplier operator} $\mathcal W^{-1} : M_{n \times p} \to M_{n \times p}$ defined by $\mathcal W^{-1}(X) := W^{-1} X$. With this notation we can write explicitly the solution of the minimization of $\mathcal J$ with respect to $X$.

\begin{lemma}
\label{lm2}
Assume that $W \in M_{n \times n}$ and $W=W^* \succ 0$.
Then the  minimizer
$$
\bar{X} = \argmin_{\A(X) =\M} \mathcal J(X,W)
$$
is given by
$$
\bar{X} = \left \{ \mathcal W^{-1} \circ \A^* \circ [ \A \circ \mathcal W^{-1} \circ \A^*]^{-1} \right \}(\M).
$$
\end{lemma}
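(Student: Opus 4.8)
The plan is to treat this as a standard equality-constrained quadratic minimization problem and solve it via Lagrange multipliers. First I would observe that since the term $\|W^{-1/2}\|_F^2$ in $\J(X,W)$ does not depend on $X$, minimizing $\J(\cdot,W)$ over the affine subspace $\{X : \A(X)=\M\}$ is equivalent to minimizing the strictly convex quadratic functional $X \mapsto \tfrac12\|W^{1/2}X\|_F^2 = \tfrac12\langle \mathcal W(X), X\rangle$, where $\mathcal W(X)=WX$ is self-adjoint and positive-definite on $M_{n\times p}$ (because $W\succ 0$). Strict convexity plus surjectivity of $\A$ guarantees that a unique minimizer $\bar X$ exists, so it suffices to exhibit a candidate satisfying the first-order optimality conditions.

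Next I would write down the Lagrangian $L(X,\lambda) = \tfrac12\langle WX,X\rangle_F + \mathrm{Re}\,\langle \A(X)-\M,\lambda\rangle_{\mathbb C^m}$ with multiplier $\lambda\in\mathbb C^m$, and set the Fréchet derivative in $X$ to zero. Using $\langle \A(X),\lambda\rangle = \langle X,\A^*(\lambda)\rangle$ from the definition of the adjoint, the stationarity condition reads $W\bar X + \A^*(\lambda) = 0$, i.e. $\bar X = -\mathcal W^{-1}(\A^*(\lambda)) = -W^{-1}\A^*(\lambda)$. Substituting this into the constraint $\A(\bar X)=\M$ gives $-(\A\circ\mathcal W^{-1}\circ\A^*)(\lambda) = \M$. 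Here I would note that $\A\circ\mathcal W^{-1}\circ\A^*$ is a self-adjoint, positive-definite (hence invertible) operator on $\mathbb C^m$: positive-definiteness follows from $\langle \A\mathcal W^{-1}\A^*(\mu),\mu\rangle = \langle W^{-1}\A^*(\mu),\A^*(\mu)\rangle = \|W^{-1/2}\A^*(\mu)\|_F^2 > 0$ for $\mu\neq 0$, using that $\A^*$ is injective (equivalently $\A$ surjective) and $W^{-1}\succ 0$. Therefore $\lambda = -[\A\circ\mathcal W^{-1}\circ\A^*]^{-1}(\M)$, and back-substituting yields
\[
\bar X = \mathcal W^{-1}\circ\A^*\circ[\A\circ\mathcal W^{-1}\circ\A^*]^{-1}(\M),
\]
as claimed. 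Finally I would confirm this $\bar X$ is indeed feasible (immediate from the computation) and is the minimizer (since the objective is strictly convex, any stationary point of the Lagrangian is the global constrained minimizer).

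I do not anticipate a serious obstacle here; the only points requiring a little care are the justification that $\A\circ\mathcal W^{-1}\circ\A^*$ is invertible (which uses surjectivity of $\A$) and, in the complex case, being slightly careful that the real-linear derivative computation produces the stated Hermitian-adjoint identities rather than spurious conjugates — but the cyclicity and conjugation properties of the trace recorded in the preliminaries handle this cleanly.
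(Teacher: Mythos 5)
Your proposal is correct and follows essentially the same route as the paper: both reduce the problem to the first-order optimality condition $W\bar X=\A^*(\lambda)$ (equivalently, orthogonality of $W\bar X$ to $\ker\A$), solve for $\lambda$ from the constraint, and back-substitute; the paper derives this condition by directly expanding $\tr(W(\bar X+H)(\bar X+H)^*)$ for $H\in\ker\A$ rather than writing a Lagrangian, and your added remarks on the invertibility of $\A\circ\mathcal W^{-1}\circ\A^*$ (via surjectivity of $\A$) and on the real-part versus full complex inner product are exactly the points the paper also handles.
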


\begin{proof}
For $W=W^* \succ 0$, we have
$$
\bar{X} =\argmin_{\A(X) = \M} \| W^{1/2} X \|_F^2
$$
if and only if there exists a $\lambda \in \mathbb{C}^m$ such that
\begin{equation}
\label{optcond}
\A^*(\lambda) = W \bar{X}, \quad \textrm{or } \langle W \bar{X} , H \rangle =0, \mbox{ for all } H \in \ker \A.
\end{equation}
This equivalence can be shown directly: assume $\bar{X}$ optimal and $H \in \ker \A$, then
\begin{eqnarray*}
\| W^{1/2} \bar{X} \|_F^2 &=&\tr(W \bar{X} \bar{X}^*) \leq  \tr(W (\bar{X} +H) (\bar{X}+H)^*)\\
&=& \tr(W \bar{X} \bar{X}^*) + \tr(W H H^*) + \tr[W ( H \bar{X}^* + \bar{X}H^*)]. 
\end{eqnarray*}
Actually $\bar{X}$ is optimal if and only if 
$\tr[W ( H \bar{X}^* + \bar{X}H^*)] =0$ for all $H \in \ker \A$. 
By the properties of the trace we have
$$
\tr(W H \bar{X}^* ) = \overline{\tr[(W H \bar{X}^*)^*]} =  \overline{\tr(\bar{X} H^* W)} =  \overline{\tr(W \bar{X} H^*)}=  \overline{\tr(H^* W \bar{X})}.
$$
Hence $\tr[W ( H \bar{X}^* + \bar{X}H^*)]=0$ if and only if $\Re e (\langle W \bar{X} , H \rangle) =0$. 
This observation allows us to compute $\bar{X}$ explicitly.
If we assume for a moment that actually $\langle W \bar{X} , H \rangle =0$ for all $H \in \ker \A$, then necessarily $ W \bar{X} = \A^*(\lambda)$, and we obtain $\bar{X} = \mathcal W^{-1} \circ \A^*(\lambda)$. 
Hence $\M = \A(\bar{X}) = \A \ \circ \mathcal W^{-1} \circ \A^*(\lambda)$ and $\lambda = [ \A \circ \mathcal W^{-1} \circ \A^*]^{-1} (\M)$. 
Eventually $\bar{X} = \left \{ \mathcal W^{-1} \circ \A^* \circ [ \A \circ \mathcal W^{-1} \circ \A^*]^{-1} \right \} (\M)$ satisfies by construction $\langle W \bar{X} , H \rangle =0$, hence $\Re e (\langle W \bar{X} , H \rangle) =0$, and the optimality condition holds.
\end{proof}


\subsubsection{Optimization of $\mathcal J$ with respect to $W$}

In this section we address the solution of the following optimization problem
$$
\bar W = \arg \min_{0 \prec W = W^* \preceq \varepsilon^{-1} I} {\cal J}(X,W) .
$$
Let us collect a few preliminary lemmas in this direction.
{\bnew \begin{lemma}
\label{lm3}
Under the constraint $W =W^*  \succ 0$, the Fr\'echet derivative of $\mathcal J$ with respect to its second variable is
\begin{equation}
\label{deriv}
\partial_W \mathcal J(X,W) = X X^* - W^{-2}.
\end{equation}
\end{lemma}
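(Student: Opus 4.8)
The plan is to compute the Fréchet derivative of $\mathcal J(X, \cdot)$ directly from the definition, treating $W$ as a self-adjoint matrix variable ranging over the open cone $\{W = W^* \succ 0\}$. Recall $\mathcal J(X,W) = \frac12\left(\|W^{1/2} X\|_F^2 + \|W^{-1/2}\|_F^2\right)$. The first step is to rewrite both summands as traces in a form that is manifestly linear (respectively, a clean algebraic function) in $W$: since $\|W^{1/2} X\|_F^2 = \tr(W^{1/2} X X^* W^{1/2}) = \tr(W X X^*)$ by cyclicity of the trace, and $\|W^{-1/2}\|_F^2 = \tr(W^{-1/2} W^{-1/2}) = \tr(W^{-1})$, we obtain the much more tractable expression
\[
\mathcal J(X,W) = \tfrac12\left(\tr(W X X^*) + \tr(W^{-1})\right).
\]

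Next I would differentiate each term along an admissible perturbation direction $H = H^*$ (so that $W + tH$ stays self-adjoint for small $t$, and stays positive definite since the cone is open). The first term is linear in $W$, so $\frac{d}{dt}\big|_{t=0}\tr((W+tH)XX^*) = \tr(H X X^*) = \langle H, X X^* \rangle$ (using that $X X^*$ is self-adjoint and the real trace inner product). For the second term I would use the standard expansion $(W + tH)^{-1} = W^{-1} - t W^{-1} H W^{-1} + O(t^2)$, valid for $t$ small because $W \succ 0$; this can be justified via the Neumann series $(W+tH)^{-1} = (I + tW^{-1}H)^{-1}W^{-1} = \sum_{j\ge 0}(-t)^j (W^{-1}H)^j W^{-1}$, which converges for $|t|$ small. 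Hence $\frac{d}{dt}\big|_{t=0}\tr((W+tH)^{-1}) = -\tr(W^{-1} H W^{-1}) = -\tr(W^{-2} H) = -\langle H, W^{-2}\rangle$, again using cyclicity and self-adjointness of $W^{-2}$. Combining, the directional derivative of $\mathcal J$ in direction $H$ equals $\frac12\langle H, XX^* - W^{-2}\rangle$, and since this is linear and bounded in $H$, the Fréchet derivative (identified with the gradient via the Frobenius inner product, and with a factor absorbed into the convention matching the statement) is $\partial_W \mathcal J(X,W) = XX^* - W^{-2}$.

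The only mild subtlety — and the step I would be most careful about — is the legitimacy of differentiating $\tr(W^{-1})$: one must confirm that the map $W \mapsto W^{-1}$ is Fréchet differentiable on the open cone of positive definite self-adjoint matrices with derivative $H \mapsto -W^{-1}HW^{-1}$, and that restricting to self-adjoint perturbations $H$ is enough to identify the full derivative (it is, because the constraint set's tangent space at an interior point is exactly the space of self-adjoint matrices, which is where $\mathcal J(X,\cdot)$ is being regarded as defined). Both facts are classical; the Neumann series argument above supplies a self-contained proof of the first, and the second is immediate from the ambient space being the real vector space of Hermitian matrices. No genuine obstacle is expected here — this is essentially a bookkeeping computation once the trace reformulation is in place.
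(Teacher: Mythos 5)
Your proposal is correct and follows essentially the same route as the paper: both reduce $\mathcal J(X,\cdot)$ to $\tfrac12(\tr(WXX^*)+\tr(W^{-1}))$, differentiate the linear term directly, and obtain $\partial_W\tr(W^{-1})=-W^{-2}$ from the first-order expansion of the inverse (the paper via the identity $(W+hV)\bigl[\tfrac{(W+hV)^{-1}-W^{-1}}{h}\bigr]=-VW^{-1}$, you via the equivalent Neumann-series expansion), using cyclicity of the trace and the openness of the positive-definite self-adjoint cone in both cases. The factor-of-$\tfrac12$ bookkeeping you flag is present in the paper's statement as well and is immaterial for the subsequent use of the lemma.
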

\begin{proof}
Let us consider the G\^{a}teaux differentiation with respect to {\nnew{$V = V^*$}}, 
\begin{eqnarray*}
\frac{ \partial}{\partial V} \| W^{1/2} X\|_F^2 &:=& \lim_{h \to 0} \tr \left [ \left ( \frac{(W + h V) - W}{h}\right ) X X^* \right] \\
&=& \tr (V X X^*) = \langle X X^*, V \rangle.
\end{eqnarray*}
Since the self-adjoint and positive-definite matrices {\nnew{form}} an open set, we have
\begin{equation}
\label{XX}
\partial_W \| W^{1/2} X\|_F^2 = X X^*.
\end{equation}
It remains to address the  Fr\'echet derivative of $\| W^{-1/2}\|_F^2 = \tr (W^{-1})$. We have
\begin{eqnarray*}
\frac{\partial}{\partial V} \| W^{-1/2}\|_F^2 &=& \lim_{h \to 0} h^{-1} \left \{ \tr [ (W + h V)^{-1}] - \tr(W^{-1}) \right \} \\
&=& \tr \left [ \lim_{h \to 0} \frac{   (W + h V)^{-1} - W^{-1}}{h}\right ].
\end{eqnarray*}
Note that
$
(W + h V)\left [ \frac{ (W + h V)^{-1} - W^{-1}}{h} \right ] = - V W^{-1},
$
so
$$
\lim_{h \to 0} \frac{   (W + h V)^{-1} - W^{-1}}{h} = -W^{-1} V W^{-1}. 
$$
By using the cyclicity of the trace we obtain
$
\partial_W \| W^{-1/2}\|_F^2 = \langle W^{-2}, V \rangle,
$
and therefore
\begin{equation}
\label{W2}
\frac{ \partial}{\partial W} \| W^{-1/2}\|_F^2 = -W^{-2}.
\end{equation}
This latter statement can also be obtained by applying
the more general result \cite[Proposition 6.2]{lese05}.
Putting together \eqref{XX} and \eqref{W2} yields \eqref{deriv}.
\end{proof}
}

Using a similar argument as in Lemma \ref{lm3}, we arrive at the next result. 
\begin{proposition}\label{lm3'}
Consider $X = U \Sigma V^*$, where $\Sigma = \diag(\sigma_1, ..., \sigma_n)$.  Let $\varepsilon > 0$.  Then
\begin{equation}\label{minJ}
\argmin_{0 \prec W=W^* \preceq \varepsilon^{-1} I} \mathcal J(X,W) =: \bar{W} = U \Sigma_{\varepsilon}^{-1} U^*.
\end{equation}
{\bnew We recall that $\Sigma_{\varepsilon} = \diag(\max\{\sigma_j, \varepsilon\})$.}
\end{proposition}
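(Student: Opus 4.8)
## Proof proposal

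The plan is to reduce the matrix optimization to a scalar (eigenvalue) optimization, and then minimize a simple one-variable function coordinatewise. First I would observe that the objective $\mathcal{J}(X,W)$ depends on $W$ only through $\|W^{1/2}X\|_F^2 = \tr(WXX^*)$ and $\|W^{-1/2}\|_F^2 = \tr(W^{-1})$; since $XX^* = U\Sigma^2 U^*$, it is natural to guess that the minimizer is simultaneously diagonalizable with $XX^*$, i.e.\ of the form $\bar W = U D U^*$ for a positive diagonal $D$. To justify this rigorously I would argue as follows: write $W = U \widetilde{W} U^*$ (this is just a change of variables, since $U$ is unitary), so that the constraint $0 \prec W = W^* \preceq \varepsilon^{-1}I$ becomes $0 \prec \widetilde{W} = \widetilde{W}^* \preceq \varepsilon^{-1}I$, and the objective becomes $\frac12\big(\tr(\widetilde{W}\Sigma^2) + \tr(\widetilde{W}^{-1})\big)$. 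Now $\tr(\widetilde{W}\Sigma^2) = \sum_j \sigma_j^2 \widetilde{W}_{jj}$ depends only on the diagonal entries of $\widetilde{W}$, while by the Schur-Horn / convexity inequality $\tr(\widetilde{W}^{-1}) \geq \sum_j (\widetilde{W}_{jj})^{-1}$ with equality iff $\widetilde{W}$ is diagonal (alternatively: for fixed diagonal the map $\widetilde W \mapsto \tr(\widetilde W^{-1})$ is minimized at the diagonal matrix, since $\tr(\widetilde W^{-1})=\sum_i 1/\lambda_i$ is a convex symmetric function of the eigenvalues dominated by the diagonal). Hence the minimum is attained at a diagonal $\widetilde{W} = \diag(w_1,\dots,w_n)$, and the constraint reduces to $0 < w_j \leq \varepsilon^{-1}$ for each $j$.

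Next I would minimize, for each index $j$ separately, the scalar function $f(w) = \sigma_j^2\, w + w^{-1}$ over $0 < w \leq \varepsilon^{-1}$. Differentiating, $f'(w) = \sigma_j^2 - w^{-2}$, which vanishes at $w = \sigma_j^{-1}$ and is negative for smaller $w$, positive for larger; so $f$ is strictly convex with unconstrained minimizer $w = \sigma_j^{-1}$. If $\sigma_j \geq \varepsilon$, then $\sigma_j^{-1} \leq \varepsilon^{-1}$ lies in the feasible interval and the minimizer is $w_j = \sigma_j^{-1} = \max\{\sigma_j,\varepsilon\}^{-1}$. If $\sigma_j < \varepsilon$ (including $\sigma_j = 0$, for which $f(w) = w^{-1}$ is simply decreasing), the unconstrained minimizer $\sigma_j^{-1}$ exceeds $\varepsilon^{-1}$, so by monotonicity of $f$ on $(0,\varepsilon^{-1}]$ the constrained minimum is at the right endpoint $w_j = \varepsilon^{-1} = \max\{\sigma_j,\varepsilon\}^{-1}$. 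In both cases $w_j = \big(\max\{\sigma_j,\varepsilon\}\big)^{-1}$, i.e.\ $\widetilde{W} = \Sigma_\varepsilon^{-1}$, and therefore $\bar{W} = U\Sigma_\varepsilon^{-1}U^*$, as claimed. Uniqueness follows since each scalar problem has a unique minimizer and the diagonalization step forces equality only at diagonal matrices.

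The main obstacle is the diagonalization reduction — specifically, justifying that $\tr(\widetilde{W}^{-1})$ is minimized, among self-adjoint positive matrices with a prescribed diagonal, exactly at the diagonal matrix. This is where one needs either the Schur-Horn majorization argument or a direct convexity/Lagrange argument; the rest (the scalar optimization with the box constraint $w \leq \varepsilon^{-1}$) is routine calculus. An alternative cleaner route, hinted at by the phrase ``using a similar argument as in Lemma~\ref{lm3}'', is to use the Fr\'echet derivative: by Lemma~\ref{lm3}, $\partial_W\mathcal{J}(X,W) = XX^* - W^{-2}$, so an interior critical point satisfies $W^{-2} = XX^* = U\Sigma^2U^*$, giving $W = U\Sigma^{-1}U^*$; when some $\sigma_j < \varepsilon$ the unconstrained solution violates $W \preceq \varepsilon^{-1}I$, and one invokes the KKT conditions together with strict convexity of $\mathcal{J}(X,\cdot)$ (which follows because $W \mapsto \tr(W^{-1})$ is strictly convex on positive-definite matrices and $W\mapsto\tr(WXX^*)$ is linear) to conclude that the active constraint clamps precisely the eigenvalues indexed by $\{j : \sigma_j < \varepsilon\}$ to the value $\varepsilon^{-1}$, yielding $\bar W = U\Sigma_\varepsilon^{-1}U^*$. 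I would present the diagonalization-plus-scalar-optimization argument as the main line, since it is the most self-contained.
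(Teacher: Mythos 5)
Your proof is correct, and it takes a genuinely different route from the paper's. The paper relaxes the constraint set $\mathcal{C}_0=\{0\preceq W=W^*\preceq\varepsilon^{-1}I\}$ to a larger convex set $\mathcal{C}_1$ cut out only by the $n$ scalar constraints $\|W^{1/2}Ue_i\|_{\ell_2^n}^2\le\varepsilon^{-1}$, forms the Lagrangian on $\mathcal{C}_1$, solves the stationarity condition $XX^*+\sum_i\lambda_iUE_{ii}U^*=W^{-2}$, exhibits multipliers $\bar\lambda_i=\max\{\varepsilon^2-\sigma_i^2,0\}$ for which the duality gap vanishes, and finally observes that the resulting $\bar W$ happens to lie in the smaller set $\mathcal{C}_0$. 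You instead conjugate by $U$ and invoke the trace inequality $\tr(\widetilde W^{-1})\ge\sum_j(\widetilde W_{jj})^{-1}$ for $\widetilde W\succ 0$, with equality iff $\widetilde W$ is diagonal, together with the observation that the diagonal part of a feasible matrix is again feasible (since $\widetilde W_{jj}=e_j^*\widetilde W e_j\le\varepsilon^{-1}$), to reduce everything to $n$ independent one-variable convex problems on $(0,\varepsilon^{-1}]$. The step you flag as the main obstacle is exactly the Cauchy--Schwarz fact $1=e_j^*\widetilde W^{1/2}\widetilde W^{-1/2}e_j\le\big(\widetilde W_{jj}\big)^{1/2}\big((\widetilde W^{-1})_{jj}\big)^{1/2}$, with equality iff $e_j$ is an eigenvector of $\widetilde W$; this is more direct than invoking Schur--Horn, though the majorization route (eigenvalues majorize the diagonal, $t\mapsto 1/t$ convex) also works. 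Your approach is more elementary and self-contained than the paper's weak-duality argument, and it yields uniqueness of the minimizer essentially for free, which the paper does not explicitly address; the paper's Lagrangian formulation, on the other hand, is the one that parallels the KKT condition \eqref{EL} quoted later in the text.
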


\begin{proof} First observe that $\mathcal J(X,W) = \infty$ if $W$ is not invertible so that 
\[
\bar{W} = \argmin_{0 \preceq W=W^* \preceq \varepsilon^{-1} I} \mathcal J(X,W). 
\]
That is, the constraint of positive-definiteness can be relaxed to positive semi-definiteness.
Now, the set 
$
{\cal C}_0 = \{ W \in M_{n \times n}: 0 \preceq W = W^* \preceq \varepsilon^{-1} I  \} 
$
is a convex set, and may be reformulated as 
\begin{eqnarray}
{\cal C}_0 &=& \{ W \in M_{n \times n}: 0 \preceq W = W^*, \hspace{2mm} \| W \|_{op} \leq \varepsilon^{-1}\} \nonumber \\
&=&  \{ W \in M_{n \times n}: 0 \preceq W = W^*, \hspace{2mm} \sup_{\| u \|_{\ell_2^n} = 1} \|  W^{1/2} u \|^2_{\ell_2^n} = \sup_{\| u \|_{\ell_2^n} = 1} \|  W u \|_{\ell_2^n} \leq \varepsilon^{-1} \}. \nonumber
\end{eqnarray}
Let $e_i \in \mathbb{R}^{n}$ denote the $i^{th}$ canonical basis vector, and note that $\| U e_i \|_{\ell^n_2} = 1$.  Then it is clear that ${\cal C}_0 \subset {\cal C}_1$, where ${\cal C}_1$ is the convex set
$$
 {\cal C}_1 =  \{ W \in M_{n \times n}: 0 \preceq W = W^*, \hspace{2mm} \| W^{1/2} U e_i \|_{\ell_2^n}^2 \leq   \varepsilon^{-1},  \hspace{5mm} i = 1,..., n \}.
$$  
The Lagrangian corresponding to the optimization of $\mathcal J$ over the convex set  ${\cal C}_1$ is given by
$$
\mathcal L(X,W, \lambda) := \frac{1}{2}\left[ \| W^{1/2} X \|_F^2 + \| W^{-1/2} \|_F^2 + \sum_{i=1}^n \lambda_i \big( \| W^{1/2} U e_i \|_{\ell_2^n}^2 - \varepsilon^{-1} \big) \right];
$$
by weak duality \cite[p.\ 226]{bova04} the minimum of ${\cal J}(X,\cdot)$ over ${\cal C}_1$ can be bounded from below by the Lagrangian dual function,
\begin{equation}
\nonumber
\sup_{(\lambda_1, ..., \lambda_n) \geq 0} g(\lambda) \leq \min_{W \in {\cal C}_1} {\cal J}(X,W),  \hspace{10mm} g(\lambda) := {\min_{W \in {\cal C}_1}} \mathcal{L}(X,W,\lambda).
\end{equation}
Following an argument similar to Lemma \ref{lm3}, $\bar{W}_{\lambda} = \arg \min_{W \in {\cal C}_1} \mathcal L(X,W,\lambda)$ agrees with the solution to the Karush-Kuhn-Tucker conditions,
\begin{equation}
\label{EL}
X X^* + \sum_{i=1}^n \lambda_i U E_{ii} U^* = W^{-2},
\end{equation}
or
$$
W = U \diag{(\sigma_i^2 + \lambda_i)^{-1/2}} U^*,
$$
if $W \in {\cal C}_1$, or equivalently if $\lambda_i \geq \max\{ \varepsilon^2 - \sigma^2_i, 0\}$. A straightforward calculation shows that $\bar{\lambda} = \arg \max_{\lambda_i \geq \max\{ \varepsilon^2 - \sigma^2_i, 0\}} g(\lambda)$ satisfies
$$
\bar{\lambda}_i =\left\{  \begin{array}{cc} \varepsilon^2 - \sigma_i^2, & \varepsilon^2 - \sigma_i^2 \geq 0, \nonumber \\
0, & \textrm{else},
 \end{array} \right.
$$
while $g(\bar{\lambda}) =\frac{1}{2} \sum_{\sigma_i < \varepsilon} \big(\frac{\varepsilon^2 + \sigma_i^2}{\varepsilon} \big) +\sum_{\sigma_i \geq \varepsilon} \sigma_i$.  But at $\bar{W}_{\bar \lambda} = U \diag{(\sigma_i^2 + \bar{\lambda}_i)^{-1/2} }U^*$, we have the equality $g(\bar{\lambda}) = {\cal J}(\bar{W}_{\bar \lambda}, X)$.  By duality, $\bar{W}_{\bar \lambda}$ must realize the minimal value of $\mathcal J(X, \cdot)$ over the set ${\cal C}_1$.
As $\bar{W}_{\bar \lambda}$ is also contained in ${\cal C}_0$ and as ${\cal C}_0 \subset {\cal{C}}_1$, it follows that $\bar{W}_{\bar \lambda}$ also minimizes ${\cal J}(X, \cdot)$ over the original convex set ${\cal C}_0$. We set $\bar W = \bar W_{\bar \lambda}$.
\end{proof}

\section{Analysis of the Algorithm}

\subsection{Basic properties of the iterates}

We start with simple properties of the iterates that hold without {\nnew{any}} requirements. In order to show the actual convergence of the {\nnew{IRLS-M}} algorithm
along with the relationship to nuclear norm minimization, we will assume additional properties of the measurement map $\A$ in the
next subsection.

\begin{proposition}
\label{prop2}
Assume that $(X^\ell,W^\ell,\varepsilon_\ell)_{\ell \in \mathbb N}$ is the output of the main algorithm. Then the following properties hold.
\begin{itemize}
\item[(i)] $\mathcal J(X^{\ell+1},W^{\ell+1}) \leq \mathcal J(X^\ell,W^\ell)$, for all $\ell \in \mathbb N$;
\item[(ii)] $\mathcal J(X^\ell,W^\ell) \geq \| X^\ell \|_*$, for all $\ell \in \mathbb N$;
\item[(iii)] There exists a constant $\mathscr A>0$ such that
\begin{equation}\label{aaa}
2 \mathscr A \left [ \mathcal J(X^\ell,W^\ell) - \mathcal J(X^{\ell+1},W^{\ell+1}) \right ] \geq \| X^{\ell+1} - X^\ell\|_F^2.
\end{equation}
This in particular implies
\begin{equation}\label{bbb}
\lim_{\ell \to \infty} \| X^{\ell+1} - X^\ell\|_F^2 = 0.
\end{equation}
\end{itemize}
\end{proposition}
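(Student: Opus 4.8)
The plan is to exploit the two-step alternating-minimization structure of the algorithm as reformulated in Section 5 (the display before \eqref{en3} and \eqref{wn2}), together with the explicit minimizers from Lemma \ref{lm2} and Proposition \ref{lm3'}. For part (i), I would chain two inequalities: first, since $X^{\ell+1}$ minimizes $\mathcal J(\cdot,W^\ell)$ over the constraint set $\A(X)=\M$, and $X^\ell$ is feasible, $\mathcal J(X^{\ell+1},W^\ell)\le \mathcal J(X^\ell,W^\ell)$. Second, since $W^{\ell+1}$ minimizes $\mathcal J(X^{\ell+1},\cdot)$ over $\{0\prec W=W^*\preceq \varepsilon_{\ell+1}^{-1}I\}$ (Proposition \ref{lm3'}, with $\bar W = U^{\ell+1}(\Sigma^{\ell+1}_{\varepsilon_{\ell+1}})^{-1}(U^{\ell+1})^*$), and since $W^\ell$ satisfies $W^\ell\preceq \varepsilon_\ell^{-1}I\preceq\varepsilon_{\ell+1}^{-1}I$ because $\varepsilon_{\ell+1}\le\varepsilon_\ell$, the matrix $W^\ell$ is feasible for the $(\ell+1)$-th weight problem, so $\mathcal J(X^{\ell+1},W^{\ell+1})\le\mathcal J(X^{\ell+1},W^\ell)$. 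Combining gives (i).

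For part (ii), I would evaluate $\mathcal J$ at the pair $(X^\ell,W^\ell)$ directly. Writing $X^\ell=U^\ell\Sigma^\ell(V^\ell)^*$ and $W^\ell=U^\ell(\Sigma^\ell_{\varepsilon_\ell})^{-1}(U^\ell)^*$, a short computation gives $\|(W^\ell)^{1/2}X^\ell\|_F^2=\sum_i \sigma_i^2/\max\{\sigma_i,\varepsilon_\ell\}$ and $\|(W^\ell)^{-1/2}\|_F^2=\tr(W^{-1})=\sum_i \max\{\sigma_i,\varepsilon_\ell\}$, where $\sigma_i=\sigma_i(X^\ell)$. Hence $2\mathcal J(X^\ell,W^\ell)=\sum_i\big(\sigma_i^2/\max\{\sigma_i,\varepsilon_\ell\}+\max\{\sigma_i,\varepsilon_\ell\}\big)\ge\sum_i 2\sigma_i=2\|X^\ell\|_*$ by AM–GM termwise (indeed $a^2/b+b\ge 2a$ for $a,b>0$). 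This is exactly the point of adding the $\|W^{-1/2}\|_F^2$ term to the functional.

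For part (iii), the idea is that the $X$-update is a strictly convex quadratic in $X$ (for fixed positive-definite $W^\ell$) whose strong-convexity constant is controlled by the smallest eigenvalue of $W^\ell$, which is bounded below by $\varepsilon_\ell/\gamma$ divided by something — more precisely $W^\ell\succeq (\max_i\sigma_i(X^\ell))^{-1}$ is too weak, so instead I would use that the optimality condition \eqref{optcond} gives, for the minimizer $X^{\ell+1}$ of $\mathcal J(\cdot,W^\ell)$ and any feasible competitor $X^\ell$ (with $H=X^\ell-X^{\ell+1}\in\ker\A$), the expansion
\begin{equation}
\mathcal J(X^\ell,W^\ell)=\mathcal J(X^{\ell+1},W^\ell)+\tfrac12\|(W^\ell)^{1/2}H\|_F^2 .
\notag
\end{equation}
Now $\|(W^\ell)^{1/2}H\|_F^2\ge \lambda_{\min}(W^\ell)\,\|H\|_F^2$, and since $W^\ell=U^\ell(\Sigma^\ell_{\varepsilon_\ell})^{-1}(U^\ell)^*$ its smallest eigenvalue is $1/\max_j\max\{\sigma_j^\ell,\varepsilon_\ell\}$, which is bounded below by a positive constant $\mathscr A^{-1}$ uniformly in $\ell$ — here one uses that $\|X^\ell\|_F$, hence all $\sigma_j^\ell$, stays bounded because $\mathcal J(X^\ell,W^\ell)\le\mathcal J(X^1,W^1)$ and $\mathcal J(X^\ell,W^\ell)\ge\|X^\ell\|_*\ge\|X^\ell\|_F/\sqrt n$ by (ii); also $\varepsilon_\ell\le\varepsilon_0=1$. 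Combining with part (i)'s second inequality $\mathcal J(X^{\ell+1},W^\ell)\ge\mathcal J(X^{\ell+1},W^{\ell+1})$ yields $\mathscr A^{-1}\|X^{\ell+1}-X^\ell\|_F^2\le 2[\mathcal J(X^\ell,W^\ell)-\mathcal J(X^{\ell+1},W^{\ell+1})]$, which is \eqref{aaa}. Then \eqref{bbb} follows because the right-hand side telescopes: the partial sums of $\mathcal J(X^\ell,W^\ell)-\mathcal J(X^{\ell+1},W^{\ell+1})$ are bounded by $\mathcal J(X^1,W^1)<\infty$ and the summands are nonnegative by (i), so they tend to $0$.

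The main obstacle is the uniform lower bound on $\lambda_{\min}(W^\ell)$, i.e., producing the constant $\mathscr A$ independent of $\ell$; this requires the a priori boundedness of the iterates $X^\ell$, which in turn rests on the monotonicity (i) and the coercive lower bound (ii). So (iii) genuinely depends on (i) and (ii), and these should be proved in that order. Everything else is routine once the alternating-minimization viewpoint and the explicit minimizers are in place.
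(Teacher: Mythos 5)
Your proposal is correct and follows essentially the same route as the paper: (i) via the same two-step alternating-minimization chain using admissibility of $W^\ell$ when $\varepsilon_{\ell+1}\le\varepsilon_\ell$, (ii) via the same explicit evaluation of $\mathcal J(X^\ell,W^\ell)$ in terms of singular values plus the AM--GM bound $(\sigma_i^\ell)^2+\varepsilon_\ell^2\ge 2\varepsilon_\ell\sigma_i^\ell$, and (iii) via the optimality identity $\mathcal J(X^\ell,W^\ell)-\mathcal J(X^{\ell+1},W^\ell)=\tfrac12\|(W^\ell)^{1/2}(X^\ell-X^{\ell+1})\|_F^2$ together with a uniform bound on the iterates and telescoping. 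The only (immaterial) difference is that you lower-bound $\|(W^\ell)^{1/2}H\|_F^2$ by $\lambda_{\min}(W^\ell)\|H\|_F^2$ where the paper uses $\|H\|_F^2\le\|(W^\ell)^{1/2}H\|_F^2\,\|(W^\ell)^{-1/2}\|_F^2$; both reduce to the same uniform bound $\max_j\max\{\sigma_j^\ell,\varepsilon_\ell\}\le\mathscr A$ coming from (i) and (ii).
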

\begin{proof}
(i) By the optimality properties of $X^{\ell+1}$ and $W^{\ell+1}$, along with the admissibility of $W^{\ell}$ in the optimization of $W^{\ell+1}$ (as $\varepsilon_{\ell+1} \leq \varepsilon_{\ell}$), we obtain the following chain of inequalities
$$
 \mathcal J(X^{\ell+1},W^{\ell+1}) \leq  \mathcal J(X^{\ell+1},W^{\ell}) \leq  \mathcal J(X^{\ell},W^{\ell}).
$$ 
(ii) 
A direct computation shows
$$
\mathcal J(X^\ell,W^\ell) = \left [ \sum_{i=1}^r \s^{\ell}_i + \sum_{i=r+1}^n \frac{(\s_i^\ell)^2 + \varepsilon_\ell^2}{2\varepsilon_{\ell}}  \right],
$$
where $r$ is the largest integer for which $\sigma_r^{\ell} \geq \varepsilon_{\ell}$.
Since $(\s_i^\ell)^2 + \varepsilon_\ell^2 \geq 2\varepsilon_{\ell} \cdot \s_i^{\ell}$, we have
$$
\mathcal J(X^\ell,W^\ell) \geq \| X^{\ell} \|_*.
$$
(iii) We have the following estimate,
\begin{align*}
&2 \left [ \mathcal J(X^\ell,W^\ell) - \mathcal J(X^{\ell+1},W^{\ell+1}) \right ] \geq  \left [ \mathcal J(X^\ell,W^\ell) - \mathcal J(X^{\ell+1},W^{\ell}) \right ] \\
&= \langle W^\ell X^\ell, X^\ell \rangle-  \langle W^\ell X^{\ell+1}, X^{\ell+1} \rangle\\
&=  \langle W^\ell (X^\ell +  X^{\ell+1}), X^\ell-  X^{\ell+1} \rangle  
+ 2 i \Im m (\langle W^\ell X^\ell, X^{\ell+1} \rangle).
\end{align*}
By optimality of $ X^{\ell+1}$ and in virtue of \eqref{optcond}, we can add $0=- 2  \langle W^\ell X^{\ell+1}, X^\ell-  X^{\ell+1} \rangle$ and $0=2 i \Im m (\langle W^\ell (X^{\ell+1}-X^\ell), X^{\ell+1} \rangle)$ (note also that $ X^\ell-  X^{\ell+1} \in \ker \A$), and we obtain
\begin{eqnarray*}
2 \left [ \mathcal J(X^\ell,W^\ell) - \mathcal J(X^{\ell+1},W^{\ell+1}) \right ] &\geq&  \langle W^\ell (X^\ell -  X^{\ell+1}), X^\ell-  X^{\ell+1} \rangle \\
&=&  \| (W^\ell)^{1/2} (X^\ell -  X^{\ell+1}) \|_F^2.
\end{eqnarray*}
On the one hand, 
$$
\| X^\ell -  X^{\ell+1} \|_F^2 \leq \| (W^\ell)^{1/2} (X^\ell -  X^{\ell+1}) \|_F^2 \| (W^\ell)^{-1/2}\|_F^2,
$$
and
$$
\| (W^\ell)^{-1/2}\|_F^2 =  ( n-r) \varepsilon_\ell^{1/2} + \sum_{i=1}^r (\s_i^\ell)^{1/2}.
$$
On the other hand, 
$$
\s_i^\ell \leq \| X^\ell\|_* \leq \mathcal J(X^1,W^0).
$$
Hence, there exists a constant $\mathscr A>0$ such that $\| (W^\ell)^{-1/2}\|_F^2 \leq \mathscr A$, for all $\ell \in \mathbb N$. It follows from \eqref{aaa} that
\[
\sum_{\ell = 0}^N \| X^{\ell+1} - X^{\ell} \|_F^2 \leq 2 \mathscr A \left[\mathcal J(X^{0},W^{0}) - \mathcal J(X^{N+1},W^{N+1})\right] \leq 2 \mathscr A \mathcal J(X^{0},W^{0}).
\]
This implies that the sum on the left converges as $N \to \infty$, and \eqref{bbb} follows.
\end{proof}

\subsection{Null space property}


We need to enforce additional conditions in order to obtain a simultaneous promotion of the approximation to both minimal rank and minimal nuclear norm solution. For this purpose we {\bnew recall} the rank null space property, {\bnew which is the analog} to the null space property for the vector case in compressed sensing \cite{codade09,grni03,fora10-1,ra10}.

\begin{definition}\label{def:RNSP} A map $\A : M_{n \times p} \to \mathbb{C}^m$ satisfies the \emph{rank null space property (RNSP)} of order $k$ if for all $H \in \ker \A \setminus \{0\}$ and all
decompositions $H = H_1 + H_2$ with $k$-rank $H_1$ it holds
\[
\| H_1 \|_* < \|H_2\|_*.
\]
\end{definition}

The rank null space property is closely linked to the nuclear norm minimization problem
\begin{equation}\label{nucl:min}
\min_{X \in M_{n \times p}} \|X\|_* \quad \mbox{ subject to } \quad \A (X) = \M.
\end{equation}
The following theorem has been shown in \cite[Theorem 3]{harexuXX}. 
\begin{theorem}\label{prop:RNSP} Let $\A: M_{n \times p} \to \mathbb{C}^m$ be a linear map. 
Then, every $k$-rank $X \in M_{n \times p}$ is the unique solution of the problem \eqref{nucl:min} 
for the datum $\M = \A(X)$ if and only if 
$\A$ satisfies the RNSP of order $k$. 
\end{theorem}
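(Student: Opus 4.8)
The plan is to prove both implications of the equivalence directly, exploiting the fact that uniqueness of the nuclear-norm minimizer at every $k$-rank matrix is equivalent to a statement purely about $\ker\A$. Throughout, fix a $k$-rank matrix $X$ with singular value decomposition $X = U_X\Sigma_X V_X^*$, write $\M=\A(X)$, and let $Z$ be any other matrix with $\A(Z)=\M$, so that $H := Z - X \in \ker\A\setminus\{0\}$. The goal is to compare $\|Z\|_*$ with $\|X\|_*$, and the natural device is the decomposition of $H$ adapted to the row/column spaces of $X$: writing $P_U$ and $P_V$ for the orthogonal projections onto $\operatorname{range}(U_X|_k)$ and $\operatorname{range}(V_X|_k)$ (the spaces spanned by the left and right singular vectors of $X$ corresponding to nonzero singular values), set $H_1 := P_U H P_V + P_U H (I - P_V) + (I-P_U) H P_V$ and $H_2 := (I-P_U) H (I-P_V)$. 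The key elementary facts are: (a) $H_2$ has row and column spaces orthogonal to those of $X$, hence $\|X + H_2\|_* = \|X\|_* + \|H_2\|_*$; (b) $H_1$ has rank at most $2k$, but more usefully, $X$ and $-H_1$ "overlap" only in a space of dimension $\le k$, giving via the triangle inequality the pinching bound $\|X + H_1\|_* \ge \|X\|_* - \|(H_1)_{[\,k\,]}'\|_*$ where only the part of $H_1$ interacting with $X$'s $k$-dimensional spaces matters — in any case $\|X + H_1\|_* \ge \|X\|_* - \|H_1'\|_*$ for an appropriate $k$-rank truncation $H_1'$. I would isolate these as one or two short lemmas (these are standard, cf. the Recht–Fazel–Parrilo decomposition), and the cleanest route is actually to bound $\|Z\|_* = \|X + H\|_*$ from below using both projections simultaneously.

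For the \emph{``if''} direction: assume $\A$ satisfies the RNSP of order $k$. Using the decomposition above,
\[
\|X + H\|_* \;\ge\; \|X + H_2\|_* - \|H_1'\|_* \;=\; \|X\|_* + \|H_2\|_* - \|H_1'\|_*,
\]
where $H_1'$ is the $k$-rank matrix capturing the interaction of $H$ with the singular spaces of $X$, and one should set it up so that $H = H_1' + (H - H_1')$ is a valid RNSP decomposition with $\|H - H_1'\|_* \ge \|H_2\|_*$ or, after being slightly more careful, so that the RNSP inequality $\|H_1'\|_* < \|H - H_1'\|_*$ directly yields $\|H_2\|_* - \|H_1'\|_* > 0$. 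Since $H \ne 0$, this gives $\|Z\|_* > \|X\|_*$ strictly, so $X$ is the unique minimizer.

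For the \emph{``only if''} direction: assume $X$ is the unique minimizer of \eqref{nucl:min} for every $k$-rank $X$, and suppose toward a contradiction that the RNSP fails, i.e. there exist $H \in \ker\A\setminus\{0\}$ and a decomposition $H = H_1 + H_2$ with $\operatorname{rank}(H_1)\le k$ and $\|H_1\|_* \ge \|H_2\|_*$. Take $X := -H_1$; it is $k$-rank, and $\M := \A(X) = \A(-H_1) = \A(H_2)$ since $\A(H)=0$. Then $H_2$ is feasible for the datum $\M$ and $\|H_2\|_* \le \|H_1\|_* = \|X\|_*$, so either $X$ is not the unique minimizer or equality of norms occurs with $H_2 \ne X$. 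The only gap is the boundary case $\|H_1\|_* = \|H_2\|_*$ with $H_2$ still a minimizer: here $X = -H_1$ and $H_2$ are two distinct feasible points of equal (minimal) nuclear norm — distinct because $H_2 - X = H \ne 0$ — contradicting uniqueness; and if $\|H_2\|_* < \|H_1\|_*$ then $X$ is not even a minimizer, again a contradiction. Hence the RNSP of order $k$ must hold.

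I expect the main obstacle to be bookkeeping in the decomposition step: getting the interaction term $H_1'$ to be simultaneously (i) of rank at most $k$ and (ii) such that $\|X + H - H_1'\|_*$ separates cleanly into $\|X\|_* + (\text{something} \ge 0)$, so that the single RNSP inequality delivers the strict gap. The subtlety is that the naive Recht–Fazel–Parrilo splitting produces an $H_1$ of rank up to $2k$, not $k$, so one must either invoke the RNSP at order $k$ with a finer truncation argument, or observe that only the genuinely overlapping $k$-dimensional part of $H$ degrades the nuclear norm of $X$. Since the paper's Definition~\ref{def:RNSP} is stated for \emph{all} decompositions $H = H_1 + H_2$ with $k$-rank $H_1$ (not a fixed canonical one), this flexibility is exactly what resolves the issue — one picks the decomposition that matches the pinching inequality — but spelling this out carefully is where the real work lies; everything else is the triangle inequality and a direct feasibility construction.
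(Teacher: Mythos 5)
First, a point of reference: the paper does not actually prove Theorem \ref{prop:RNSP} --- it cites it from \cite[Theorem 3]{harexuXX} --- so there is no in-paper argument to measure you against. Your ``only if'' direction is correct and is the standard one: if some $H \in \ker \A \setminus \{0\}$ admits a decomposition $H = H_1 + H_2$ with $\rank(H_1) \leq k$ and $\|H_1\|_* \geq \|H_2\|_*$, then $X := -H_1$ is a $k$-rank matrix for which $H_2 \neq X$ is feasible with $\|H_2\|_* \leq \|X\|_*$, contradicting unique minimality. That half is complete.

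The ``if'' direction, however, contains a genuine gap, and the specific repair you gesture at does not work. Your displayed inequality $\|X+H\|_* \geq \|X+H_2\|_* - \|H_1'\|_*$ with $H_2 = (I-P_U)H(I-P_V)$ forces $H_1' = H - H_2 = P_U H + H P_V - P_U H P_V$, which has rank up to $2k$, so the order-$k$ RNSP cannot be applied to it --- as you yourself note. The fallback of keeping only the ``genuinely overlapping'' rank-$\leq k$ piece $P_U H P_V$ is false: take $X = \diag(1,0)$, $k=1$, and $H = \left( \begin{smallmatrix} 0 & 1 \\ 1 & 0 \end{smallmatrix} \right)$; then $P_U H P_V = 0$, so the proposed bound would read $\|X+H\|_* \geq \|X\|_* + \|H\|_* = 3$, whereas $\|X+H\|_* = \sqrt{5}$. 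The decomposition that actually closes the argument is not adapted to $X$ at all: one takes $H_1' = H_{[k]}$ and $H_2' = H - H_{[k]}$, and proves the key inequality $\|X+H\|_* \geq \|X\|_* - \|H_{[k]}\|_* + \|H - H_{[k]}\|_*$ for every $k$-rank $X$. This is not a consequence of the triangle inequality or of Lemma \ref{lm5}; it follows from Mirsky's singular-value perturbation theorem, $\sum_i |\sigma_i(A) - \sigma_i(B)| \leq \|A-B\|_*$, applied with $A = X$, $B = -H$ and using $\sigma_i(X)=0$ for $i>k$. With that lemma in hand, the RNSP applied to the single decomposition $H = H_{[k]} + (H - H_{[k]})$ (which is exactly the flexibility the ``for all decompositions'' formulation of Definition \ref{def:RNSP} provides) yields $\|X+H\|_* > \|X\|_*$ for all $H \in \ker \A \setminus \{0\}$, hence uniqueness. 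So the missing idea is precisely Mirsky's inequality (or an equivalent singular-value comparison); without it, the step you flag as ``where the real work lies'' does not go through.
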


We also need a slightly stronger condition.

\begin{definition}\label{def:SRNSP} A map $\A : M_{n \times p} \to \mathbb{C}^m$ satisfies the \emph{strong rank null space property (SRNSP)}  of order $k$ with 
constant $\eta \in (0,1)$ if for all $X \in \ker \A \setminus \{0\}$ and all
decompositions $X = X_1 + X_2$ with $k$-rank $X_1$ there exists another
decomposition $X = H_1 + H_2$ such that $H_1$ is of rank at most $2k$, 
$\langle H_1,H_2\rangle = 0$, $X_1 H_2^* = 0$, $X_1^* H_2 = 0$, and
\[
\| H_1 \|_* \leq \eta \|H_2\|_*.
\]
\end{definition}
{\bnew
\begin{remark} The above definition is equivalent to \cite[Definition II.4]{mofa10}: Let $GL(U,V)$ be the subspace of $n \times p$ matrices of rank at most $k$, whose row space belongs to the span of $V \in M_{n \times k}$ 
and whose column space belongs to the span of $U \in M_{n \times k}$. Let $S_k = \{GL(U,V) : U \in M_{n \times k},V \in M_{p \times k}, U^*U = V^*V = I\}$. For $T \in GL(U,V)$, we let ${\mathcal P}_T$ be the projection onto $T$, and $T^\perp$ denotes the orthogonal complement of $T$ in $M_{n\times p}$. Then a map $\A : M_{n \times p} \to \mathbb{C}^m$ satisfies the \emph{strong rank null space property (SRNSP)}  of order $k$ with constant $\eta \in (0,1)$ if
\[
\|{\mathcal P}_T(X)\|_* \leq \eta \|{\mathcal P}_{T^\perp}\|_*
\]
for all $T \in S_k$ and $H \in \ker  \A \setminus \{0\}$.
\end{remark} 
}

Further, we introduce the {\it best $k$-rank
approximation error} in the nuclear norm,
\[
\rho_k(X)_* := \min_{Z \in M_{n \times p}: \rank(Z) \leq k} \|X-Z\|_*.
\]
It is well-known that the minimum is attained at the $k$-spectral truncation
$Z = X_{[k]}$.
The SRNSP implies the following crucial inequality.
\begin{lemma}[Inverse triangle inequality]
\label{lm7}
Assume that $\A: M_{n \times p} \to \mathbb C^m$ satisfies the SRNSP of order $k$ and
constant $\eta \in (0,1)$.
Let $X, Z$ be matrices such that
$$
\A(X) = \A(Z).
$$
Then we have the following inverse triangle inequality
\begin{equation}
\label{invtr2}
\| X - Z\|_* \leq \frac{1 + \eta}{1- \eta} \left ( \| Z\|_* - \| X \|_* + 2\rho_k(X)_* \right ).
\end{equation}
\end{lemma}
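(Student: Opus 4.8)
The plan is to set $H = X - Z \in \ker \A$ and decompose it so that we can apply the SRNSP. The natural first move is to write $H = H_1 + H_2$ where $H_1$ is the ``interesting'' part and $H_2$ the ``tail,'' and then use the SRNSP to produce an orthogonal decomposition $H = \widetilde H_1 + \widetilde H_2$ with $\|\widetilde H_1\|_* \le \eta \|\widetilde H_2\|_*$. However, the cleanest route (mirroring the vector case of \cite{dadefogu10,fora10-1}) is to split $H$ relative to the SVD of $X$, i.e. relative to the $k$-spectral truncation $X_{[k]}$. Concretely, let $T$ be the tangent space to the rank-$k$ matrices at $X_{[k]}$ (the space $GL(U,V)$ from the remark after Definition \ref{def:SRNSP}, built from the top $k$ left/right singular vectors of $X$), and set $H_1 = \mathcal{P}_T(H)$, $H_2 = \mathcal{P}_{T^\perp}(H)$. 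Then $H_1$ has rank at most $2k$, $\langle H_1, H_2\rangle = 0$, and the block structure gives $(X_{[k]}) H_2^* = 0$, $(X_{[k]})^* H_2 = 0$; moreover $\|H_1 + (\text{part in }T)\|_*$ behaves additively with $H_2$ on the orthogonal complement. The SRNSP then yields directly $\|H_1\|_* \le \eta \|H_2\|_*$.

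Next I would chain the triangle inequalities. On one hand, using that $X_{[k]} + H_2$ and $(X - X_{[k]}) + H_1$ are supported on orthogonal subspaces (the tail part of $X$, namely $X - X_{[k]}$, lives in $T^\perp$, the same space as $H_2$; the top part $X_{[k]}$ together with $H_1$ lives in a space where nuclear norm adds over the relevant blocks), one gets
\[
\|X\|_* \ge \|X_{[k]} + H_2\|_* - \|X - X_{[k]}\|_* - \|H_1\|_* = \|X_{[k]}\|_* + \|H_2\|_* - \rho_k(X)_* - \|H_1\|_*,
\]
where I used $\|X - X_{[k]}\|_* = \rho_k(X)_*$ and the block-orthogonality to split $\|X_{[k]} + H_2\|_*$ additively. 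Since $Z = X - H$, write $\|Z\|_* = \|X - H_1 - H_2\|_* \ge \|H_2\|_* - \|X_{[k]}\|_* - \|X-X_{[k]}\|_* - \|H_1\|_*$ type bounds — more precisely I would bound $\|Z\|_* \ge \|X_{[k]}\|_* - \|H_1\|_* - \rho_k(X)_* $ minus/plus $\|H_2\|_*$ as appropriate, and combine with the SRNSP bound $\|H_1\|_* \le \eta\|H_2\|_*$ to extract $\|H_2\|_* \le \frac{1}{1-\eta}\big( \|Z\|_* - \|X\|_* + 2\rho_k(X)_*\big)$.

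Finally, $\|X - Z\|_* = \|H\|_* = \|H_1 + H_2\|_* \le \|H_1\|_* + \|H_2\|_* \le (1+\eta)\|H_2\|_*$, and substituting the bound on $\|H_2\|_*$ gives exactly
\[
\|X - Z\|_* \le \frac{1+\eta}{1-\eta}\big( \|Z\|_* - \|X\|_* + 2\rho_k(X)_*\big).
\]
The main obstacle I anticipate is the bookkeeping in the two triangle-inequality steps: one must be careful about which matrices actually live on mutually orthogonal ranges/corows so that the nuclear norm genuinely splits as a sum (rather than only satisfying $\ge$ in the wrong direction), and to correctly track the appearance of $\rho_k(X)_*$ from the $X - X_{[k]}$ term on both sides. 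The SRNSP is precisely engineered so that the decomposition $H = H_1 + H_2$ has the orthogonality properties $X_{[k]} H_2^* = 0$ and $X_{[k]}^* H_2 = 0$ needed to make these splittings legitimate, so the real work is just assembling the inequalities in the right order.
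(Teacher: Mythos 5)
Your overall strategy is the same as the paper's: set $H = X - Z \in \ker\A$, invoke the SRNSP relative to the $k$-rank part $X_{[k]}$ to obtain $H = H_1 + H_2$ with $\|H_1\|_* \le \eta\|H_2\|_*$ and $X_{[k]}H_2^* = 0$, $X_{[k]}^*H_2 = 0$, use additivity of the nuclear norm (Lemma \ref{lm5}) for $X_{[k]}$ and $H_2$, and finish with $\|X-Z\|_* \le (1+\eta)\|H_2\|_*$ after bounding $\|H_2\|_*$. The final assembly you describe is exactly the paper's.

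However, the one intermediate step you actually commit to is not correct, and the step that carries the argument is left as ``type bounds \dots as appropriate.'' Your displayed inequality $\|X\|_* \ge \|X_{[k]}+H_2\|_* - \|X-X_{[k]}\|_* - \|H_1\|_*$ does not follow from the triangle inequality (writing $X_{[k]}+H_2 = X - (X-X_{[k]}) + H_2$ only yields $\|X\|_* \ge \|X_{[k]}+H_2\|_* - \rho_k(X)_* - \|H_2\|_*$, which after the additive splitting is vacuous), and in any case a lower bound on $\|X\|_*$ is not what is needed. The inequality that closes the argument is a lower bound on $\|Z\|_*$: writing $Z = X - H = \bigl(X_{[k]} - H_2\bigr) - H_1 + \bigl(X - X_{[k]}\bigr)$ and applying Lemma \ref{lm5} to $X_{[k]}$ and $-H_2$,
\[
\|Z\|_* \;\ge\; \|X_{[k]} - H_2\|_* - \|H_1\|_* - \rho_k(X)_* \;=\; \|X_{[k]}\|_* + \|H_2\|_* - \|H_1\|_* - \rho_k(X)_* \;\ge\; \|X\|_* - 2\rho_k(X)_* + (1-\eta)\|H_2\|_*,
\]
which gives $\|H_2\|_* \le \tfrac{1}{1-\eta}\bigl(\|Z\|_* - \|X\|_* + 2\rho_k(X)_*\bigr)$ and hence the claim. (The paper packages this same computation slightly differently: it sets $X_1 = Z - (X - X_{[k]})$, notes $X_{[k]} - H_2 = H_1 + X_1$, and uses $\|X_{[k]}\|_* + \|H_2\|_* = \|H_1 + X_1\|_* \le \|H_1\|_* + \|X_1\|_*$.) So the idea and the scaffolding are right, but this one chain must actually be carried out; as submitted it is missing, and the inequality you do write in its place is false as stated.
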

{\bnew This result was independently obtained in \cite[Lemma II.5]{mofa10}.
Inverse triangle inequalities are well-known in the context of the classical null space property for vectors \cite{dadefogu10}, see also the survey paper \cite{pink11}.}
\begin{proof} Let $X_0 := X_{[k]}$ be the best $k$-rank approximation of $X$ in the nuclear norm, and
set $X_c = X - X_0$.
Let $H = X- Z = X_0 + X_c - Z \in \ker \A$. Set $X_1 = Z - X_c$, so that $H = X_0 - X_1$. 
By the definition of the strong null space
property, there exists a decomposition $H = H_0 + H_c$ with
$\rank(H_0) \leq 2k$, $\langle H_0, H_c \rangle =0$, $X_0 H_c^* = 0$, $X_0^* H_c=0$ and such that $\|H_0\|_* \leq \eta \|H_c\|_*$. Hence, by Lemma \ref{lm5} recalled below, we have
\begin{eqnarray*}
\| X_0 - H_c \|_* = \| H_c \|_* + \|X_0\|_*
\end{eqnarray*}
and, as $H_0 + X_1 = X_0 - H_c$, we have equivalently
\begin{eqnarray*}
\| H_c \|_* + \|X_0\|_* = \|H_0 + X_1\|_* \leq \|H_0 \|_* + \| X_1\|_*.
\end{eqnarray*}
The inequality 
 $\|H_0\|_* \leq \eta \|H_c\|_*$ yields
\begin{eqnarray*}
\| H_c \|_* + \|X_0 \|_* \leq \| X_1\|_* + \eta \|H_c\|_*,
\end{eqnarray*}
or, equivalently,
$$
\| H_c\|_* \leq \frac{1}{1-\eta} \left ( \| X_1 \|_* - \| X_0\|_* \right).
$$
Now, using $X_1 = Z- X_c$, we obtain
\begin{align}
\| X- Z\|_* &= \| H_0 + H_c \|_* \leq (1 + \eta) \| H_c\|_* \leq \frac{1+ \eta}{1-\eta} ( \| X_1\|_* - \| X_0\|_* )\notag\\
&  \leq \frac{1+ \eta}{1-\eta} ( \| Z \|_* - \| X_0\|_* + \|X_c\|_*)
\leq \frac{1+ \eta}{1-\eta} ( \| Z \|_* - \| X - X_c\|_* + \|X_c\|_*)\notag\\
& \leq \frac{1+ \eta}{1-\eta} ( \| Z \|_* - \| X\|_* + 2 \|X_c\|_*).
\notag
\end{align}
The proof is completed by noting that $\|X_c\|_* = \|X-X_0\|_* = \rho_k(X)_*$.
\end{proof}

Note that if $X$ is $k$-rank then $\rho_k(X)_* = 0$ and \eqref{invtr2} reduces to
\begin{equation}
\label{invtr}
\| X - Z\|_* \leq \frac{1 + \eta}{1- \eta} \left ( \| X\|_* - \| Z \|_* \right).
\end{equation}
Now we easily conclude that the SRNSP implies stable low-rank recovery via 
nuclear norm minimization.
\begin{corollary}\label{cor:SRNMP} Suppose $\A : M_{n \times p} \to \mathbb{C}^m$ satisfies the strong null space property of order $k$ with some constant $\eta \in (0,1)$. Let $X \in M_{n \times p}$ and $\M = \A(X)$. Then
a solution $\bar X$ of \eqref{nucl:min} satisfies
\[
\|X - \bar X\|_* \leq 2 \frac{1+\eta}{1-\eta} \rho_k(X)_*.
\]
In particular, every $k$-rank $X \in M_{n \times p}$ is
the unique solution of the nuclear norm minimization problem \eqref{nucl:min}. Consequently,
the strong rank null space property of order $k$ with some constant $\eta < 1$ 
implies the rank null space property of order $k$. 
\end{corollary}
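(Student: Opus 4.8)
The plan is to derive everything directly from the inverse triangle inequality of Lemma~\ref{lm7}. First I would note that the infimum in \eqref{nucl:min} is attained: the feasible set $\{Z : \A(Z) = \M\}$ is a nonempty (it contains $X$) closed affine subspace and $\|\cdot\|_*$ is coercive on it, so a minimizer $\bar X$ exists. Fix such a $\bar X$. Since $X$ itself is feasible for \eqref{nucl:min}, minimality of $\bar X$ gives $\|\bar X\|_* \le \|X\|_*$, i.e.\ $\|\bar X\|_* - \|X\|_* \le 0$. Applying Lemma~\ref{lm7} with $Z = \bar X$ (legitimate because $\A(X) = \M = \A(\bar X)$) and the inequality \eqref{invtr2} then yields
\[
\|X - \bar X\|_* \;\le\; \frac{1+\eta}{1-\eta}\Big(\|\bar X\|_* - \|X\|_* + 2\rho_k(X)_*\Big) \;\le\; 2\,\frac{1+\eta}{1-\eta}\,\rho_k(X)_*,
\]
which is the claimed error bound.

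Next, for the ``in particular'' assertion I would specialize to a $k$-rank matrix $X$. Then $\rho_k(X)_* = 0$, so the bound just obtained (equivalently, \eqref{invtr}) forces $\|X - \bar X\|_* = 0$, hence $\bar X = X$. Since this holds for \emph{every} minimizer $\bar X$ of \eqref{nucl:min}, the $k$-rank matrix $X$ is the unique solution of the nuclear norm minimization problem for the datum $\M = \A(X)$.

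Finally, the ``consequently'' clause follows by invoking Theorem~\ref{prop:RNSP}: we have shown that, under the SRNSP of order $k$, every $k$-rank matrix is the unique solution of \eqref{nucl:min} for its own data, and by the equivalence in Theorem~\ref{prop:RNSP} this is precisely the statement that $\A$ satisfies the RNSP of order $k$.

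There is no real obstacle here — the substance is entirely contained in Lemma~\ref{lm7}. The only points requiring a word of care are verifying that a minimizer of \eqref{nucl:min} exists (handled by the coercivity/closedness remark) and checking that the hypothesis of Lemma~\ref{lm7}, namely $\A(X) = \A(\bar X)$, holds before applying it, which it does by feasibility of $\bar X$.
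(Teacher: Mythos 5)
Your argument is correct and follows essentially the same route as the paper's own proof: apply the inverse triangle inequality of Lemma~\ref{lm7} with $Z=\bar X$, use $\|\bar X\|_*\leq\|X\|_*$ from minimality, specialize to $\rho_k(X)_*=0$ for exact recovery, and invoke Theorem~\ref{prop:RNSP} for the final implication. Your additional remarks on existence of a minimizer and on checking the hypothesis $\A(X)=\A(\bar X)$ are sound but not needed beyond what the paper already states implicitly.
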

\begin{proof} 
Clearly $\|\bar X\|_* \leq \|X\|_*$ and
\eqref{invtr2} yields $\|X-\bar X\|_* \leq  {\nnew{2}}(1+\eta)/(1-\eta) \rho_k(X)_*$. Since $\rho_k(X)_* = 0$
for all $k$-rank $X$, the latter estimate implies exact recovery of such matrices.
Theorem 
\ref{prop:RNSP} implies also that $\A$ satisfies the rank null space property of order $k$.
\end{proof}

The SRNSP is somewhat difficult to analyze directly. As in the vector case \cite{fora10-1,ra10}, it is implied by the
(rank) restricted isometry property, Definition \ref{def:rip}:

\begin{proposition}\label{prop:RIP:RNSP} 
Assume that $\A:M_{n \times p} \to \mathbb{C}^m$ has restricted isometry constant 
\begin{equation}
\label{ripopt}
\delta_{4k} < \sqrt{2} - 1 \approx 0.41\;.
\end{equation}
Then $\A$ satisfies the SRNSP of order $k$ with constant 
\[
\eta = \sqrt{2} \frac{\delta_{4k}}{1-\delta_{3k}} \quad \in (0,1)\;.
\]
\end{proposition}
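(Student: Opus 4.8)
The plan is to adapt the by-now classical argument showing that the restricted isometry property implies a null space property in compressed sensing, replacing the support-set ``shelling'' by a partition of singular values into blocks of size $k$. Fix $X \in \ker \A \setminus \{0\}$ and a decomposition $X = X_1 + X_2$ with $\operatorname{rank}(X_1) \le k$. First I would produce the decomposition demanded by the SRNSP: let $P_1$ and $P_2$ be the orthogonal projections onto the column space and the row space of $X_1$, put $T = \{M \in M_{n \times p} : (I-P_1)M(I-P_2) = 0\}$, and set
\[
H_1 := \mathcal P_T(X) = P_1 X + (I-P_1) X P_2, \qquad H_2 := \mathcal P_{T^\perp}(X) = (I-P_1)X(I-P_2).
\]
Then $H_1 + H_2 = X$; $\operatorname{rank}(H_1) \le \operatorname{rank}(P_1 X) + \operatorname{rank}((I-P_1)XP_2) \le 2k$; $\langle H_1, H_2 \rangle = 0$ because $T \perp T^\perp$; and since $X_1(I-P_2) = 0$ and $X_1^*(I-P_1) = 0$ we get $X_1 H_2^* = X_1(I-P_2)X^*(I-P_1) = 0$ and $X_1^* H_2 = 0$. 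Thus every structural requirement of Definition \ref{def:SRNSP} holds, and only the estimate $\|H_1\|_* \le \eta \|H_2\|_*$ remains.

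For that estimate, I would write the singular value decomposition of $H_2$ and group the singular triples into successive blocks of $k$, giving $H_2 = \sum_{j \ge 1} T_j$ with $\operatorname{rank}(T_j) \le k$, the $T_j$ mutually Frobenius-orthogonal and each Frobenius-orthogonal to $H_1$ (as $T_j \in T^\perp$), and $\sum_j \|T_j\|_* = \|H_2\|_*$. The usual bound $\|T_{j+1}\|_{op} \le k^{-1} \|T_j\|_*$ (every singular value in block $j$ dominates every one in block $j+1$) gives $\|T_{j+1}\|_F \le k^{-1/2}\|T_j\|_*$, hence $\sum_{j \ge 2}\|T_j\|_F \le k^{-1/2}\|H_2\|_*$. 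Since $\A(X) = 0$ we have $\A(H_1 + T_1) = -\sum_{j \ge 2} \A(T_j)$, so combining the lower RIP bound applied to $H_1 + T_1$ (of rank $\le 3k$) with the bilinear RIP estimate $|\langle \A(Y), \A(Z)\rangle| \le \delta_{r_1 + r_2}\|Y\|_F\|Z\|_F$ for Frobenius-orthogonal $Y, Z$ of ranks $r_1, r_2$ (applied with $Y = H_1 + T_1$, $Z = T_j$, so $r_1 + r_2 \le 4k$) and the triangle inequality,
\[
(1-\delta_{3k})\|H_1 + T_1\|_F^2 \le \|\A(H_1+T_1)\|_2^2 \le \delta_{4k}\|H_1+T_1\|_F \sum_{j \ge 2}\|T_j\|_F \le \delta_{4k}\, k^{-1/2}\|H_1+T_1\|_F\,\|H_2\|_*.
\]
Dividing by $\|H_1+T_1\|_F$, using $\|H_1\|_F \le \|H_1 + T_1\|_F$ and $\|H_1\|_* \le \sqrt{2k}\,\|H_1\|_F$ (as $\operatorname{rank} H_1 \le 2k$) then yields $\|H_1\|_* \le \sqrt{2}\,\frac{\delta_{4k}}{1-\delta_{3k}}\|H_2\|_* = \eta \|H_2\|_*$, and $\delta_{4k} < \sqrt 2 - 1$ together with $\delta_{3k} \le \delta_{4k}$ forces $\eta < 1$.

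The routine pieces are the block shelling estimate, the verification of the four structural identities, and the bilinear estimate $|\langle \A(Y),\A(Z)\rangle|\le\delta_{r_1+r_2}\|Y\|_F\|Z\|_F$, which follows by applying the RIP to the rank-$(r_1+r_2)$ matrices $Y/\|Y\|_F \pm Z/\|Z\|_F$ (Frobenius orthogonality makes both have squared Frobenius norm $2$) and polarizing. The point that needs care — and essentially the only obstacle — is organizing the decomposition so that only Frobenius orthogonality, not the stronger orthogonality of row and column spaces, is needed in the bilinear estimate, and bookkeeping which restricted isometry constant ($\delta_{3k}$ versus $\delta_{4k}$) each rank budget forces. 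Finally, the degenerate case $H_2 = 0$ cannot occur: the displayed chain would give $H_1 = 0$, hence $X = H_1 + H_2 = 0$, contradicting $X \in \ker\A \setminus \{0\}$; so the division by $\|H_1 + T_1\|_F$ above is legitimate.
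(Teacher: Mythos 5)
Your proposal is correct and follows essentially the same route as the paper: the tangent-space decomposition via the projections $P_1,P_2$ is exactly the paper's block decomposition $H_0,H_c$ written in the SVD basis of $X_1$, and the block-of-size-$k$ shelling of $H_2$, the lower-RIP bound on the rank-$3k$ matrix $H_1+T_1$, and the orthogonal bilinear estimate with $\delta_{4k}$ reproduce the paper's argument (which keeps the block length $\ell$ general and only sets $\ell=k$ at the end). The only cosmetic difference is that you re-derive the bilinear estimate by polarization rather than citing \cite[Lemma 3.3]{capl09}, and you make explicit the harmless degenerate case $H_1+T_1=0$.
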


Our proof uses \cite[Lemma 3.3]{capl09} and \cite[Lemma 2.3]{fapareXX}, which we recall here for the reader's convenience.

\begin{lemma}[Lemma 3.3 in \cite{capl09}]\label{lem:RIP:aux} Let $\A : M_{n\times p} \to \mathbb{C}^m$ with restricted isometry constant $0<\delta_k$.
Then for two matrices $X,Y \in M_{n \times p}$ with $\rank(X) + \rank(Y) \leq k$ and $\langle X,Y\rangle = 0$
it holds
\[
|\langle \A(X), \A(Y) \rangle| \leq \delta_k \|X\|_F \|Y\|_F.
\]
\end{lemma}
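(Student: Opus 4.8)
The plan is to derive the bound from the polarization (parallelogram) identity applied to the sum and difference $X \pm Y$, each of which is a low-rank matrix to which the restricted isometry property applies directly. First I would dispose of the trivial case in which $X$ or $Y$ vanishes, and otherwise rescale. Since the claimed inequality is absolutely homogeneous of degree one in each of $X$ and $Y$ separately, and since scaling a matrix by a nonzero scalar preserves both its rank and the orthogonality relation $\langle X, Y \rangle = 0$, it suffices to prove $|\langle \A(X), \A(Y) \rangle| \leq \delta_k$ under the normalization $\|X\|_F = \|Y\|_F = 1$.

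In the complex setting the inner product $\langle \A(X), \A(Y) \rangle$ need not be real, so I would first rotate $Y$ by a unimodular scalar. I would choose $c \in \mathbb{C}$ with $|c| = 1$ such that $\langle \A(X), \A(cY) \rangle = \overline{c}\,\langle \A(X), \A(Y) \rangle$ equals $|\langle \A(X), \A(Y) \rangle|$, that is, is real and nonnegative. Replacing $Y$ by $Y' := cY$ changes nothing essential: $\rank(Y') = \rank(Y)$, so $\rank(X) + \rank(Y') \leq k$ still holds; the Frobenius norm is preserved, $\|Y'\|_F = 1$; and orthogonality persists, since $\langle X, Y' \rangle = \overline{c}\,\langle X, Y \rangle = 0$. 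It therefore suffices to bound the now real, nonnegative quantity $\Re\langle \A(X), \A(Y') \rangle = |\langle \A(X), \A(Y) \rangle|$.

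Next I would apply the restricted isometry property to $X + Y'$ and $X - Y'$. By subadditivity of the rank, $\rank(X \pm Y') \leq \rank(X) + \rank(Y') \leq k$, so both matrices qualify for Definition \ref{def:rip}. Orthogonality together with the normalization gives $\|X \pm Y'\|_F^2 = \|X\|_F^2 + \|Y'\|_F^2 = 2$, with no surviving cross term. The polarization identity in $\mathbb{C}^m$ yields $\|\A(X+Y')\|_{\ell_2^m}^2 - \|\A(X-Y')\|_{\ell_2^m}^2 = 4\,\Re\langle \A(X), \A(Y') \rangle$. Bounding the first term above by $(1+\delta_k)\|X+Y'\|_F^2$ and the second below by $(1-\delta_k)\|X-Y'\|_F^2$, the right-hand side is at most $2(1+\delta_k) - 2(1-\delta_k) = 4\delta_k$. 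Hence $|\langle \A(X), \A(Y) \rangle| = \Re\langle \A(X), \A(Y') \rangle \leq \delta_k$, which is the claim after undoing the normalization.

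The computations here are entirely routine; the only point requiring care is the complex case, where one must verify that the phase rotation simultaneously preserves the rank bound, the Frobenius normalization, and---crucially---the orthogonality $\langle X, Y \rangle = 0$, so that the two-sided RIP estimate for $X \pm Y'$ can be combined cleanly through the parallelogram identity. I expect this bookkeeping to be the main, and rather mild, obstacle; everything else follows immediately from the definition of the restricted isometry constant.
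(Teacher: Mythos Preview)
Your argument is correct and is precisely the standard polarization proof of this statement. Note, however, that the paper does not supply its own proof of this lemma: it is merely recalled from \cite[Lemma~3.3]{capl09} for the reader's convenience, so there is nothing in the paper to compare your proof against beyond the citation itself.
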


\begin{lemma}[Lemma 2.3 in \cite{fapareXX}]
\label{lm5}
Let $X$ and $Z$ be matrices of the same dimensions. If $X Z^* = 0$ and $X^* Z =0$ then 
$$
\| X + Z\|_* = \| X\|_* +\| Z\|_*.
$$ 
\end{lemma}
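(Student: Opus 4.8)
The plan is to build a singular value decomposition of $X+Z$ directly out of those of $X$ and $Z$, using the two hypotheses only to guarantee that the singular vectors assemble into orthonormal systems. First I would translate the algebraic conditions into geometry. From $X^*Z = 0$ every column of $Z$ lies in $\ker(X^*) = \operatorname{range}(X)^\perp$, so the column spaces of $X$ and $Z$ are mutually orthogonal; from $XZ^* = 0$ every column of $Z^*$ lies in $\ker(X) = \operatorname{range}(X^*)^\perp$, so the row spaces of $X$ and $Z$ are mutually orthogonal.

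Next I would fix compact singular value decompositions $X = \sum_{i=1}^{r} \sigma_i u_i v_i^*$ and $Z = \sum_{j=1}^{s} \tau_j p_j q_j^*$, where $r = \operatorname{rank}(X)$, $s = \operatorname{rank}(Z)$, all $\sigma_i,\tau_j > 0$, and $\{u_i\},\{v_i\}$ and $\{p_j\},\{q_j\}$ are the left/right singular vectors of $X$ and $Z$ respectively. The $u_i$ span $\operatorname{range}(X)$ and the $p_j$ span $\operatorname{range}(Z)$, which are orthogonal by the first step, so $\{u_1,\dots,u_r,p_1,\dots,p_s\}$ is orthonormal in $\mathbb{C}^n$. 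Symmetrically, the $v_i$ span $\operatorname{range}(X^*)$ and the $q_j$ span $\operatorname{range}(Z^*)$, whence $\{v_1,\dots,v_r,q_1,\dots,q_s\}$ is orthonormal in $\mathbb{C}^p$.

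Then I would observe that $X + Z = \sum_{i=1}^r \sigma_i u_i v_i^* + \sum_{j=1}^s \tau_j p_j q_j^*$ is an expansion into rank-one terms whose left factors are orthonormal and whose right factors are orthonormal. Completing both families to full orthonormal bases and placing $\sigma_1,\dots,\sigma_r,\tau_1,\dots,\tau_s$ on the diagonal exhibits a genuine singular value decomposition of $X+Z$; hence these are exactly the nonzero singular values of $X+Z$ (with multiplicity), and summing gives $\|X+Z\|_* = \sum_i \sigma_i + \sum_j \tau_j = \|X\|_* + \|Z\|_*$.

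The one point I would justify rather than assert is that an expansion $\sum_k \omega_k a_k b_k^*$ with $\{a_k\}$, $\{b_k\}$ orthonormal and $\omega_k > 0$ genuinely is a singular value decomposition; this is immediate once the two families are completed to unitary matrices, and it is precisely where the orthogonality coming from the hypotheses is used. As an alternative that sidesteps this bookkeeping, one could invoke trace duality $\|A\|_* = \max_{\|Y\|_{op}\le 1}\Re\,\tr(AY^*)$: subadditivity yields $\|X+Z\|_* \le \|X\|_* + \|Z\|_*$ for free, and testing against $Y = U_X V_X^* + U_Z V_Z^*$, which is a partial isometry of operator norm one precisely because the column and row spaces split orthogonally, kills the cross terms (one checks $XV_Z = 0$ and $ZV_X = 0$) and produces the matching lower bound $\Re\,\tr\big((X+Z)Y^*\big) = \|X\|_* + \|Z\|_*$.
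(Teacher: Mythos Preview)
Your proof is correct. Note, however, that the paper does not actually supply its own proof of this lemma: it merely recalls the statement from \cite{fapareXX} for the reader's convenience, so there is no in-paper argument to compare against. Your direct construction of an SVD of $X+Z$ from those of $X$ and $Z$---by translating $X^*Z=0$ and $XZ^*=0$ into orthogonality of the respective column and row spaces---is exactly the standard argument (and is essentially how the cited reference proves it). The trace-duality alternative you sketch is also valid and is a nice way to avoid the bookkeeping of completing to full unitaries.
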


\begin{proof}[of Proposition \ref{prop:RIP:RNSP}] For convenience 
we give a simple proof, which results in the stated non-optimal constant in \eqref{ripopt}, along
the lines of the proof of Theorem 2.6 in \cite{ra10}. Let $X \in \ker \A \setminus \{0\}$ 
and $X_1$ be of rank at most $k$, so that, by setting $X_2 = X - X_1$, we have $X = X_1 + X_2$.
Let 
\[
X_1 = U \left( \begin{matrix} \Sigma & 0 \\ 0 & 0 \end{matrix} \right) V^*
\]
be the (full) singular value decomposition of $X_1$ with $\Sigma$ being 
a $k \times k$ diagonal matrix. Set $\hat{H} = U^* X V$ and partition
\[
\hat{H} = \left( \begin{matrix} \hat{H}_{11} & \hat{H}_{12} \\ \hat{H}_{21} & \hat{H}_{22} \end{matrix} \right),
\]
where $\hat{H}_{11} \in M_{k \times k}$. Now, set
\[
H_0 = U \left( \begin{matrix} \hat{H}_{11} & \hat{H}_{12} \\ \hat{H}_{21} & 0 \end{matrix} \right) V^*,
\quad H_c = U \left(\begin{matrix} 0 & 0 \\ 0 & \hat{H}_{22} \end{matrix} \right) V^*.
\]
Then $\rank(H_0) \leq 2k$, $X_1  H_c^*= 0$, $X_1^* H_c = 0$, and $\langle H_0, H_c \rangle = 0$ as desired. It remains to prove that $\|H_0\|_* \leq \eta \|H_c\|_*$. Let $\hat{H}_{22}$
have singular value decomposition 
$\hat{H}_{22} = \tilde{U} \diag(\tilde{\sigma}) \tilde{V}^*$, in particular, the entries of 
$\tilde{\sigma}$ are in decreasing order. Now we choose an integer $\ell > 0$ and decompose
the vector $\tilde{\sigma}$ into vectors $\sigma^{(j)}$, $j=1,2,\dots$, of support length $\ell$ defined by
\[
\sigma^{(j)}_i = \left\{ \begin{matrix} \tilde{\sigma}_i,\;\mbox{ if } \ell (j-1) < i \leq \ell j,\\ 
0,\; \mbox{ otherwise }.\end{matrix} \right.
\]
Clearly, $\sum_j \sigma^{(j)} = \tilde{\sigma}$. We set, for $j=1,2,\dots$,
\[
H_{j} = U \left(\begin{matrix} 0 & 0 \\ 0 & \tilde{U} \diag(\sigma^{(j})  \tilde{V}^* \end{matrix} \right) V^*.
\]
Then $\rank(H_j) \leq \ell$, $H_c = \sum_{j\geq 1} H_j$ and $\langle H_0,H_j\rangle = 0$ for
all $j=1,2,\dots$. Also, observe that since $H \in \ker \A$ we have 
$\A(H_0+H_1) = \sum_{j\geq 2} \A(-H_j)$. Now we first use that 
$\|H_0\|_* \leq \sqrt{2k} \|H_0\|_F \leq \sqrt{2k} \|H_0 + H_1\|$ by orthogonality of $H_0$ and $H_1$, 
and estimate the latter. To this end, we use orthogonality of $H_0$ and $H_1$, the RIP and Lemma \ref{lem:RIP:aux},
\begin{align}
\|H_0 + H_1\|_F^2 &\leq \frac{1}{1-\delta_{2k+\ell}} \|\A(H_0+H_1)\|^2_F = \frac{1}{1-\delta_{2k+\ell}}
\langle \A(H_0+H_1), \sum_{j\geq 2} \A(H_j) \rangle \notag\\
& =   \frac{1}{1-\delta_{2k+\ell}} \sum_{j\geq 2} \langle \A(H_0+H_1), \A(H_j) \rangle
\leq  \frac{\delta_{2k+2\ell}}{1-\delta_{2k+\ell}} \|H_0+H_1\|_F \sum_{j\geq 2} \|H_j\|_F.\notag
\end{align}
Hence, $\|H_0+H_1\|_F \leq  \frac{\delta_{2k+2\ell}}{1-\delta_{2k+\ell}} \sum_{j\geq 2} \|H_j\|_F$. Since the sequence $\tilde{\sigma}$ is nonincreasing, we have by definition of $\sigma^{(j)}$ that, for $j \geq 2$ and for all $i$,
$\sigma^{(j+1)}_i \leq \frac{1}{\ell} \|\sigma^{(j)}\|_{\ell_1} = \frac{1}{\ell} \|H_{j}\|_*$, 
and therefore,
\[
\|H_{j+1}\|_F = \left(\sum_{i=j\ell + 1}^{(j+1)\ell} \sigma_i^2\right)^{1/2} \leq \frac{1}{\sqrt{\ell}} \|H_{j}\|_*.
\]
Combining our estimates
we obtain
\begin{align}
\|H_0\|_* &\leq \sqrt{2k} \|H_0+H_1\|_F \leq  \frac{\delta_{2k+2\ell}}{1-\delta_{2k+\ell}} \sqrt{2k} \sum_{j\geq 2} \|H_j\|_F
\leq  \frac{\delta_{2k+2\ell}}{1-\delta_{2k+\ell}} \sqrt{\frac{2k}{\ell}} 
\sum_{j\geq 1} \|H_j\|_* \notag\\
&= \frac{\delta_{2k+2\ell}}{1-\delta_{2k+\ell}} \sqrt{\frac{2k}{\ell}} \|H_c\|_*.
\label{choose:ell}
\end{align}
In the last step we applied Lemma \ref{lm5}. 
Now we choose $\ell = k$ and obtain
$
\| H_0 \|_* \leq \eta \|H_c\|_*
$
with 
\[
\eta = \sqrt{2} \frac{\delta_{4k}}{1-\delta_{3k}}.
\]
Since $\delta_{3k} \leq \delta_{4k}$, we have $\eta < 1$ if $\delta_{4k} < \sqrt{2} -1$.
\end{proof}

We note that different choices of $\ell$ in \eqref{choose:ell} (say $\ell = 2k$, or $\ell = \lceil k/2\rceil$) lead to slightly different conditions. Also, we do not claim optimality of the constant
$\sqrt{2}-1$. We expect that with a more complicated proof as in \cite{capl09} or 
\cite{fo09,fola09,limo11}, one can still improve this value. Our goal here was to rather provide 
a simple proof. {\bnew After submission of an initial version of our paper, we became aware of \cite{oymofaha11}, where 
an elegant 
relationship between vector and matrix cases is shown. 
In particular, properties and recovery guarantees valid for sparse vector recovery by means of $\ell_1$-minimization are shown to imply corresponding 
properties and recovery guarantees for low-rank matrices by means of nuclear-norm minimization.} {\nnew It seems that together with \cite{limo11}
this implies slightly better guarantees than provided in  Proposition \ref{prop:RIP:RNSP}.}

\subsection{Analysis of the algorithm when $\A$ satisfies the null space property}

Before presenting the main result of this paper, we introduce another functional, dependent on a parameter $\varepsilon > 0$,
\begin{equation}\label{def:Jepsilon}
\mathcal{J}_{\varepsilon}(X) := J_{\varepsilon}(\Sigma(X)) = \sum_{i=1}^{n} j^{\varepsilon}(\sigma_i(X)),
\end{equation}
where 
$$
j^{\varepsilon}(u) = \left\{ \begin{array}{cc} |u|, & |u| \geq \varepsilon, \nonumber \\
\frac{u^2 + \varepsilon^2}{2\varepsilon}, & u < \varepsilon.
\end{array} \right. 
$$

\begin{theorem}
\label{secondres}
Consider the IRLS-M algorithm for low-rank matrix recovery, with parameters $\gamma = 1/n$ and $K \in  \mathbb{N}$.  
Let $\A: M_{n \times p} \rightarrow \mathbb{C}^m$ be a surjective map.
Then, for each set of measurements $\M \in \mathbb{C}^m$, the sequence of matrices $(X^{\ell})_{\ell \in \mathbb{N}}$ produced by the IRLS-M algorithm has the following properties:
\begin{itemize}
\item[(i)] Assume that $\A$ satisfies the strong rank null space property of order $K$ (Definition \ref{def:SRNSP}). 
If $\lim_{\ell \to \infty} \varepsilon_\ell =0$, then the sequence $(X^{\ell})_{\ell \in \mathbb{N}}$ converges to a $K$-rank matrix $\bar{X}$ agreeing with the measurements; in this case, $\bar{X}$ is also the unique nuclear norm minimizer.  
\item[(ii)] If $\lim_{\ell \to \infty} \varepsilon_{\ell} = \varepsilon > 0$ then every subsequence of $X^{\ell}$ has a convergent subsequence. Each
accumulation point $\tilde X$ of $(X^\ell)_{\ell\geq 1}$ coincides with a minimizer $\bar X$ of the functional $\mathcal{J}_{\varepsilon}$ subject to $\A(\bar X) = \M$. In particular, if this minimizer is unique
then the full sequence $(X^\ell)_{\ell\geq 1}$ converges to it. 

If, in addition, $\A$ satisfies the strong rank null space property of order $K$ (Definition \ref{def:SRNSP}) 
and constant $\eta < 1 - \frac{2}{K-2}$, 
then each accumulation point $\tilde{X}= \bar X$ of $(X^\ell)_{\ell \geq 1}$ 
satisfies, for any matrix $X$ such that $\A (X) = \M$, and for $k < K - \frac{2\eta}{1-\eta}$, 
$$
\| X - \bar X\|_* 
\leq \Lambda \rho_k(X)_*,
$$
where 
\[
\Lambda := \frac{4(1+\eta)^2}{(1-\eta)^2((K-k)(1-\eta) - 2\eta)} + \frac{2(1+\eta)}{1-\eta}.
\]
\item[(iii)] In particular, if $\A$ satisfies the strong rank null space property of order $K$ with constant $\eta < 1 - \frac{2}{K-2}$, and if
there exists a $k$-rank matrix $X$ satisfying $\A (X) = \M$, then necessarily $\varepsilon = 0$.
\end{itemize}
\end{theorem}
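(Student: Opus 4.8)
The plan is to follow the iterates through the monotone quantity $\mathcal J(X^\ell,W^\ell)$ and the nonincreasing parameter $\varepsilon_\ell$, splitting according to whether $\varepsilon:=\lim_{\ell\to\infty}\varepsilon_\ell$ is positive or zero. As preliminaries I would record: by Proposition~\ref{prop2}(i)--(ii), $\|X^\ell\|_*\le\mathcal J(X^\ell,W^\ell)\le\mathcal J(X^1,W^0)$ for all $\ell$, so the iterates lie in a bounded, hence (in finite dimensions) precompact, subset of $M_{n\times p}$ and accumulation points exist, and $\mathcal J(X^\ell,W^\ell)\downarrow L$ for some $L\ge0$; the computation in the proof of Proposition~\ref{prop2}(ii) shows $\mathcal J(X^\ell,W^\ell)=\mathcal J_{\varepsilon_\ell}(X^\ell)$ with $\mathcal J_\varepsilon$ as in \eqref{def:Jepsilon}, and an elementary estimate on $j^\varepsilon$ gives $\|Y\|_*\le\mathcal J_\varepsilon(Y)\le\|Y\|_*+\frac{n\varepsilon}{2}$ for every $Y\in M_{n\times p}$ and every $\varepsilon>0$. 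Since $\varepsilon_\ell$ is nonincreasing, either it attains the value $0$ at some step --- in which case $\sigma_{K+1}(X^\ell)=0$, $X^\ell$ has rank at most $K$, and the sequence is eventually constant, a situation subsumed under $\varepsilon=0$ --- or $\varepsilon_\ell>0$ throughout and $\varepsilon_\ell\downarrow\varepsilon\ge0$. Note also that $\{X:\A(X)=\M\}$ is a nonempty affine subspace with direction $\ker\A$, since $\A$ is surjective.

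For the case $\varepsilon>0$ I would characterize the accumulation points. Let $X^{\ell_j}\to\tilde X$; by \eqref{bbb} also $X^{\ell_j+1}\to\tilde X$. The optimality condition \eqref{optcond} for the update \eqref{xn2} says $W^\ell X^{\ell+1}\in(\ker\A)^\perp$, i.e. $W^\ell X^{\ell+1}=\A^*(\lambda^{\ell+1})$ for some $\lambda^{\ell+1}$. Writing $W^\ell=\phi_{\varepsilon_\ell}(X^\ell(X^\ell)^*)$ with the continuous $\phi_\varepsilon(t):=1/\max\{\sqrt t,\varepsilon\}$, joint continuity of $(t,\varepsilon)\mapsto\phi_\varepsilon(t)$ for $\varepsilon>0$ --- and here positivity of $\varepsilon$ is essential --- gives $W^{\ell_j}\to\bar W:=\phi_\varepsilon(\tilde X\tilde X^*)$, so letting $j\to\infty$ yields $\bar W\tilde X\in(\ker\A)^\perp$. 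Diagonalizing $\tilde X=U\Sigma V^*$ one computes $\bar W\tilde X=U\operatorname{diag}\big(\sigma_i/\max\{\sigma_i,\varepsilon\}\big)V^*=U\operatorname{diag}\big((j^\varepsilon)'(\sigma_i)\big)V^*$, which is exactly $\nabla\mathcal J_\varepsilon(\tilde X)$: since $j^\varepsilon$ is convex, even and continuously differentiable, $\mathcal J_\varepsilon$ is a convex, continuously differentiable, unitarily invariant functional with this gradient. As the feasible set is affine with direction $\ker\A$, the relation $\nabla\mathcal J_\varepsilon(\tilde X)\perp\ker\A$ is, by convexity, precisely the condition that $\tilde X$ minimize $\mathcal J_\varepsilon$ subject to $\A(X)=\M$; if this minimizer is unique, all accumulation points coincide and precompactness forces the full sequence to converge to it. This proves the first part of~(ii).

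For the error estimate (still $\varepsilon>0$) the key quantitative input is $\varepsilon_\ell\le\gamma\sigma_{K+1}(X^\ell)=\frac1n\sigma_{K+1}(X^\ell)$, so $\sigma_{K+1}(X^\ell)\ge n\varepsilon_\ell\ge n\varepsilon$ and, passing to the limit, $\sigma_{K+1}(\bar X)\ge n\varepsilon$ for every accumulation point $\bar X$. Now fix $X$ with $\A(X)=\M$: Lemma~\ref{lm7} applied to $X$ and $\bar X$ (with SRNSP order $K$, using $\rho_K(X)_*\le\rho_k(X)_*$) gives $\|X-\bar X\|_*\le\frac{1+\eta}{1-\eta}\big(\|\bar X\|_*-\|X\|_*+2\rho_k(X)_*\big)$; since $\bar X$ minimizes $\mathcal J_\varepsilon$ and $X$ is feasible, $\|\bar X\|_*\le\mathcal J_\varepsilon(\bar X)\le\mathcal J_\varepsilon(X)\le\|X\|_*+\frac{n\varepsilon}{2}$, hence $\|\bar X\|_*-\|X\|_*\le\frac{n\varepsilon}{2}$; and $(K-k)\sigma_{K+1}(\bar X)\le\rho_k(\bar X)_*\le\|\bar X-X\|_*+\rho_k(X)_*$ (the last step because $\bar X_{[k]}$ is a best rank-$k$ approximant of $\bar X$ while $X_{[k]}$ is an admissible competitor), which together with $\sigma_{K+1}(\bar X)\ge n\varepsilon$ bounds $n\varepsilon$ linearly in $\|\bar X-X\|_*$ and $\rho_k(X)_*$. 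Feeding this back into the inverse triangle inequality yields an inequality of the form $\|X-\bar X\|_*\le a\,\|X-\bar X\|_*+b\,\rho_k(X)_*$, where the hypotheses $\eta<1-\frac{2}{K-2}$ and $k<K-\frac{2\eta}{1-\eta}$ are exactly what keep the feedback coefficient $a$ strictly below $1$ and $\Lambda$ finite; solving for $\|X-\bar X\|_*$ and tracking constants gives the bound with the stated $\Lambda$. Part~(iii) then follows at once: if a matrix $X$ of rank at most $k$ with $\A(X)=\M$ existed while $\varepsilon>0$, this estimate with $\rho_k(X)_*=0$ would force $\bar X=X$, hence $\sigma_{K+1}(\bar X)=0$, contradicting $\sigma_{K+1}(\bar X)\ge n\varepsilon>0$; therefore $\varepsilon=0$.

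Finally, for the case $\varepsilon=0$ (part~(i)): for any accumulation point $\tilde X$ with $X^{\ell_j}\to\tilde X$, the sandwich $\|X^{\ell_j}\|_*\le\mathcal J_{\varepsilon_{\ell_j}}(X^{\ell_j})\le\|X^{\ell_j}\|_*+\frac{n\varepsilon_{\ell_j}}{2}$ together with $\mathcal J_{\varepsilon_{\ell_j}}(X^{\ell_j})\to L$ forces $\|\tilde X\|_*=L$, so all accumulation points share the same nuclear norm. Since $\varepsilon_\ell\downarrow0$ (the case where $\varepsilon_\ell$ hits $0$ being trivial), the indices with $\varepsilon_\ell<\varepsilon_{\ell-1}$ are infinite and along them $\sigma_{K+1}(X^\ell)=n\varepsilon_\ell\to0$; extracting a convergent subsequence along these indices produces an accumulation point $\bar X$ with $\sigma_{K+1}(\bar X)=0$, i.e. of rank at most $K$, and $\A(\bar X)=\M$. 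By Corollary~\ref{cor:SRNMP}, $\bar X$ is the unique solution of nuclear norm minimization subject to $\A(\cdot)=\M$, so $L=\|\bar X\|_*$ is the minimal nuclear-norm value on the feasible set; hence every accumulation point is feasible with minimal nuclear norm, so equals $\bar X$ by uniqueness, and precompactness gives $X^\ell\to\bar X$. The main obstacle, beyond the care needed for the limit passage in the $\varepsilon>0$ case (continuity of $X\mapsto\phi_\varepsilon(XX^*)$, valid only for $\varepsilon>0$, and identifying $W^\ell X^{\ell+1}$ with a gradient of $\mathcal J_\varepsilon$), is the constant bookkeeping in the self-referential inequality, where the hypotheses on $\eta$, $K$ and $k$ are precisely what keep the feedback below $1$.
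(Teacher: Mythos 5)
Your proposal is correct and follows essentially the same route as the paper's proof: the same case split on $\lim_\ell \varepsilon_\ell$, the same use of Proposition~\ref{prop2}, Weyl's bound, Corollary~\ref{cor:SRNMP}, the optimality condition \eqref{optcond} passed to the limit via continuity of $X\mapsto W$ for $\varepsilon>0$, and the combination of Lemma~\ref{lm7} with Proposition~\ref{rearrangers} for the error estimate. The only (harmless) deviations are cosmetic: in part (i) you conclude full-sequence convergence by showing every accumulation point is feasible with minimal nuclear norm and invoking uniqueness, where the paper instead applies the inverse triangle inequality \eqref{invtr} to $X^\ell$ and $\bar X$; and in part (ii) you arrange the algebra as a self-referential inequality with slightly sharper intermediate constants ($n\varepsilon/2$ and $K-k$ in place of the paper's $n\varepsilon$ and $K+1-k$), which still yields the stated $\Lambda$.
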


\begin{proof} 
Note that since $0 \leq \varepsilon_{\ell+1} \leq \varepsilon_\ell$, the sequence $(\varepsilon_\ell)_{\ell \in \mathbb N}$ always converges.  

(i) 
If $\varepsilon_\ell =0$ for some $\ell$, then we set $\bar{X} = X^\ell$. If $\varepsilon_\ell >0$ for all $\ell$, then there exists a subsequence, such that
$$
\varepsilon_{\ell_j+1} <\varepsilon_{\ell_j}, \quad j \in \mathbb N, 
$$
and $\varepsilon_{\ell_j+1} = \gamma \s_{K+1}(X^{\ell_j+1})$. Because of Proposition \ref{prop2} (i) and (ii), $(X^{\ell_j+1})_j$ is bounded. Hence, we can extract a further subsequence, denoted again by $(X^{\ell_j+1})_j$ for simplicity, 
which converges to some $\tilde{X}  = \lim_{j \to \infty} X^{\ell_j}$. Since $\lim_{j \to \infty} \varepsilon_{\ell_j+1} =0$, 
an application of Theorem \ref{we3} 
immediately yields
$\sigma_{K+1}(\tilde{X}) = 0$, and  $\tilde{X}$ is a $K$-rank solution of $\A (X) = \M$. 

The strong rank null space property together with Corollary \ref{cor:SRNMP} 
implies that $\bar{X}$ is the unique nuclear norm minimizer $\bar X$. Next we show that the whole sequence $(X^\ell)_\ell$ converges to $\bar{X}$. We have 
$$
\mathcal J(X^\ell,W^\ell) = \left [ \sum_{i=1}^{r(\ell)} \s^{\ell}_i + \sum_{i=r(\ell)+1}^n \frac{(\s_i^\ell)^2 + \varepsilon_\ell^2}{2\varepsilon_{\ell}}  \right],
$$
where we recall that $r(\ell)$ is the smallest integer for which $\sigma_i^{\ell} < \varepsilon_{\ell}$ if $i > r(\ell)$.   Since $\mathcal J(X^\ell,W^\ell,\varepsilon_\ell)$ is a monotonically decreasing sequence and $\mathcal J(X^{\ell_j},W^{\ell_j},\varepsilon_{\ell_j}) \to \| \bar{X}\|_*$ for $j \to \infty$, we also have $\mathcal J(X^\ell,W^\ell,\varepsilon_\ell) \to  \| \bar{X}\|_*$ for $\ell \to \infty$.
We have the estimates from above and below
$$
\mathcal J(X^\ell,W^\ell,\varepsilon_\ell) - \sum_{i=r(\ell)+1}^n \Big( \frac{(\s_i^\ell)^2 + \varepsilon_\ell^2}{2\varepsilon_{\ell}} - \sigma_i^{\ell} \Big) \leq \| X^\ell\|_* \leq \mathcal J(X^\ell,W^\ell,\varepsilon_\ell) .
$$
As $\ell \to \infty$, $\sigma_i^{\ell} < \varepsilon_{\ell} \rightarrow 0$ for $i \geq r(\ell) + 1$, and we compute
$$
\lim_{\ell \rightarrow \infty} \mathcal \sum_{i=r(\ell)+1}^n \Big( \frac{(\s_i^\ell)^2 + \varepsilon_\ell^2}{2\varepsilon_{\ell}} - \sigma_i^{\ell} \Big) \leq \lim_{\ell \rightarrow \infty} \sum_{i=r(\ell)+1}^n \Big( \frac{\varepsilon_\ell - \s_i^{\ell}}{2} \Big) \leq \lim_{\ell \rightarrow \infty}  n\varepsilon_\ell = 0,
$$
and we obtain
$$
\lim_{\ell \to \infty} \| X^\ell\|_* = \| \bar{X}\|_*.
$$
Since $\bar{X}$ is a $K$-rank matrix, we can apply the inverse triangle inequality \eqref{invtr} to obtain
\begin{equation}
\label{estim}
\| X^{\ell} - \bar X \|_* \leq \frac{1+\eta}{1- \eta} \left ( \| X^\ell\|_* - \| \bar X\|_* \right ),
\end{equation}
where $\eta < 1$ by assumption. Taking the limit for $\ell \to \infty$ in \eqref{estim}, we obtain $\lim_{\ell \to \infty} X^\ell = \bar{X}$.

(ii) In the case that $\lim_{\ell \to \infty} \varepsilon_\ell =\varepsilon >0$ we use the functional $\J_\varepsilon$ introduced in
\eqref{def:Jepsilon}.
Observe that ${J}_\varepsilon: \mathbb R^n \to \mathbb R_+$ is differentiable, convex, 
and {\it absolutely symmetric}, 
see Section \ref{appendix:sing}. By Proposition \ref{diff:singvalues}, we therefore have, for $X$ with singular value
decomposition $X = U \diag(\sigma(X)) V^*$,
\[
\nabla \J_\varepsilon (X) = U \diag(j_\varepsilon'(\sigma_i(X))) V^*,
\] 
where the derivative of $j_\varepsilon$ is given by
\[
j_{\varepsilon}'(u) = \left\{ \begin{array}{cc} \sgn(u), & |u| \geq \varepsilon, \nonumber \\
u/\varepsilon, & |u| < \varepsilon.
\end{array} \right. 
\]
We have
$$
\bar{X}= \bar{X}(\varepsilon) \in \arg \min_{\A(X) = \M} \mathcal J_\varepsilon(X)
$$
if and only if $\langle \nabla \J_\varepsilon (\bar{X}), H \rangle = 0$ for all $H \in \ker \A$. It is now straightforward to verify that the latter is
equivalent to
\begin{equation}
\label{opteps}
\langle W \bar{X}, H \rangle =0, \mbox{ for all } H \in \ker \A,
\end{equation}
where $W = [((\bar{X}\bar{X}^*)^{1/2})_{\varepsilon}]^{-1}$. Here, we recall that $Z_{\varepsilon}$ is the 
$\varepsilon$-regularization of $Z$, see \eqref{estab}.

Since $X^\ell$ is a bounded sequence, it has accumulation points. Let us first show that any accumulation point of $X^\ell$ is a minimizer of ${\cal J}_{\varepsilon}$ subject to the constraint $\A(X) = \M$. Note that  ${\cal J}_{\varepsilon}$ is not \emph{strictly} convex, so such a minimizer is not necessarily unique.  Let $(X^{\ell_j})_{j \in \mathbb N}$ be any convergent subsequence and 
$\tilde X$ its limit. 
Note that $W^{\ell_j} = [((X^{\ell_j}(X^{\ell_j})^*)^{1/2})_{\varepsilon_{\ell_j}}]^{-1}$. Hence, $W^{\ell_j}$ depends continuously on 
$X^{\ell_j}$ so that also $W^{\ell_j}$ converges to a limit 
$\tilde W = \lim_{j\to \infty} W^{\ell_j} = \big[ ( (\tilde{X} \tilde X^*)^{1/2})_{\varepsilon} \big]^{-1}$.
By invoking Proposition \ref{prop2} (iii) we have also $X^{\ell_j+1} \to \tilde X$ for $j \to \infty$, hence by \eqref{optcond}
$$
\langle \tilde W \tilde X, H \rangle = \lim_{j \to \infty} \langle W^{\ell_j}  X^{\ell_j+1}, H \rangle =0, \mbox{ for all } H \in \ker \A.
$$
This is exactly the optimality condition \eqref{opteps} and therefore $\tilde X =\bar X$ 
is a minimizer of 
${\cal J}_{\varepsilon}(X)$.

We now prove the error estimate.  To begin, let $X^{\ell_j}$ be a converging subsequence of $X^\ell$ with limit $\tilde{X}$, which, for simplicity we denote again by
 $X^\ell$. As just outlined $\tilde{X}=\bar X$ is a minimizer of $\mathcal{J}_{\varepsilon}$ subject to $\A (\tilde X) = \M$. 
Note that for any matrix $X$ satisfying the constraint 
$\A (X) = \M$, and for any minimizer $\bar X$ of $\mathcal{J}_{\varepsilon}$ subject to the constraint $\A(\bar X) = \M$, 
 $$
 \| \bar X\|_{*} \leq {\cal{J}}_{\varepsilon}(\bar X) \leq {\cal{J}}_{\varepsilon}(X) \leq \| X \|_{*} + n \varepsilon,
 $$
where we have used the fact that $\bar{X}$ is a minimizer of $\mathcal{J}_{\varepsilon}$, so that
 \begin{equation}\label{Xeps:min}
 \| \bar X \|_{*} -  \| X \|_{*} \leq n \varepsilon.
 \end{equation}
By the inverse triangle inequality, Lemma \ref{lm7}, and \eqref{Xeps:min}
\begin{align}
\label{nsp:1}
\| \bar{X}- X \|_{*} 
&\leq \frac{1 + \eta}{1 - \eta} ( \| \bar{X}\|_{*} - \| X \|_{*}+ 2 \rho_k(X)_* )
\leq \frac{1 + \eta}{1 - \eta} ( n\varepsilon  + 2 \rho_k(X)_*). 
\end{align}
Thus we reach
 $$
 n \varepsilon = \lim_{\ell \rightarrow \infty} n \varepsilon_{\ell} \leq \lim_{\ell \rightarrow \infty} \sigma_{K+1} (X^{\ell}) 
 = \sigma_{K+1}(\bar{X}).
 $$
It follows from Proposition \ref{rearrangers} that 
\begin{align}
(K + 1 -k)  n\varepsilon & \leq (K+1-k) \sigma_{K+1}(\bar{X}) \leq \|\bar{X}-X\|_* + \rho_{k}(X)_* \notag\\
& \leq \frac{1 + \eta}{1 - \eta} ( n \varepsilon + 2\rho_{k}(X)_*) + \rho_k(X)_*. \notag
\end{align}
Under the assumption that $K - k > \frac{2\eta}{1 - \eta}$, this inequality yields
$$
n \varepsilon \leq \frac{4(1+\eta)}{(1-\eta)((K-k)(1-\eta)-2\eta)} \rho_k(X)_*.
$$
Plugging this back into \eqref{nsp:1}, we arrive at the desired result.
\end{proof}

{\bnew \subsection{Discussion of related work}

While we were finishing this paper,  Maryam Fazel informed us of her joint work with Karthik Mohan, where a similar iteratively least squares minimization algorithm is studied \cite{mofa10}. We shortly outline the differences between our contribution and \cite{mofa10}. In \cite{mofa10} a direct and faithful generalization of the algorithm analyzed in 
\cite{dadefogu10} for sparse vector recovery is provided.
In particular, instead of \eqref{wn}
$$
W^{\ell}= U^\ell \big( \diag(\max\{ \sigma^\ell_j, \varepsilon_\ell\})\big)^{-1} (U^\ell)^*,
$$
the update rule for the weights is given there by
$$
W^{\ell}= U^{\ell} \diag (((\sigma^\ell_j)^2 + \varepsilon_\ell^2)^{-1/2}) (U^\ell)^*.
$$
On the one hand, the drawback of our up-date rule \eqref{wn} is that is it not anymore equivalent to an unconstrained minimization of the energy
$$
\widetilde{\mathcal J}(X^\ell,W) = \mathcal J(X^\ell,W) + \frac{\varepsilon_\ell}{2} \| W^{1/2}\|_F^2, 
$$
with respect to the second variable $W \succ 0$, but to the constrained minimization of $\mathcal J(X^\ell,\cdot)$ as indicated in \eqref{wn2}.
Such equivalence had to be proven in Lemma \ref{lm3} and Proposition \ref{lm3'}, representing an additional technical difficulty 
in order to develop a proof of convergence, and introducing an element of novelty with respect to \cite{dadefogu10}. On the other hand, our choice is motivated by a significant 
improvement of the complexity of the algorithm when the operator $\A$ is {\it separable}, i.e., acts columnwise, 
$$
\A(X) = (\A_1 X_1,\dots, \A_p X_p) = (\M_1, \dots, \M_p) = \M \in \mathbb C^m,
$$
where $X_i$, $i=1,\dots,p$,  are the columns of $X$, and $\A_i$, $i=1,\dots, p$ are suitable matrices acting on the vectors $X_i$. 
We repeat that this case includes relevant situations such as the matrix completion problem. As clarified in Section \ref{woodsec}, the Woodbury formula, which can applied to the matrices $\A_i (W^\ell)^{-1} \A_i^*$, thanks to the particular structure
of the matrices $W^\ell$ as in \eqref{wn}, allows for their inversion with a much more economical cost than it would be possible by using the up-date rule proposed in  \cite{mofa10}. Moreover, while the analysis in  \cite{mofa10} also partially addresses reweighted least squares algorithms for Schatten $q$-norms, for $q<1$, in the case of the nuclear norm ($q=1$) the authors prove in \cite[Theorem II.7]{mofa10} only an analogous version of our Theorem \ref{secondres} (i), and do not develop a counterpart to (ii) and (iii).
}
\section{Appendix}

\subsection{Basic properties of singular values}

In \cite{we12} Weyl proved the following stability estimate on the singular values.
\begin{theorem}
\label{we3}
Let $X,Y \in M_{n \times p}$ be fixed. Then
$$
|\sigma_i(X)- \sigma_i(Y)| \leq \| X-Y\|_F, \quad i=1,2,\dots.
$$
\end{theorem}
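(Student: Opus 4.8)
The plan is to deduce the stated bound from the Courant--Fischer min--max characterization of the singular values together with the elementary norm comparison $\|\cdot\|_{op}\le\|\cdot\|_F$. First I would recall that for $Z\in M_{n\times p}$ (with $n\le p$) and $1\le i\le n$ one has
\[
\sigma_i(Z)=\min_{\substack{V\subseteq \mathbb{C}^p\\ \dim V = p-i+1}}\ \max_{\substack{v\in V,\ \|v\|_2=1}}\|Zv\|_2 ;
\]
this follows directly from the singular value decomposition $Z=U\Sigma V^*$ by expanding $v$ in the orthonormal basis of right singular vectors and a dimension count: any $(p-i+1)$-dimensional subspace meets the span of the first $i$ right singular vectors nontrivially, which gives the lower bound, while the span of the last $p-i+1$ right singular vectors realizes the upper bound. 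For indices $i>n$ all singular values are zero by convention and the inequality is trivial, so I may assume $1\le i\le n$.

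Next I would fix $i$ and choose a subspace $V_\star\subseteq\mathbb{C}^p$ with $\dim V_\star = p-i+1$ attaining the minimum for $Y$, so that $\max_{v\in V_\star,\ \|v\|_2=1}\|Yv\|_2=\sigma_i(Y)$. Using $V_\star$ as a (generally suboptimal) competitor in the minimum defining $\sigma_i(X)$ and the triangle inequality in $\mathbb{C}^n$ applied pointwise in $v$,
\[
\sigma_i(X)\ \le\ \max_{\substack{v\in V_\star,\ \|v\|_2=1}}\|Xv\|_2\ \le\ \max_{\substack{v\in V_\star,\ \|v\|_2=1}}\bigl(\|Yv\|_2+\|(X-Y)v\|_2\bigr)\ \le\ \sigma_i(Y)+\|X-Y\|_{op}.
\]
Exchanging the roles of $X$ and $Y$ gives the reverse bound, whence $|\sigma_i(X)-\sigma_i(Y)|\le\|X-Y\|_{op}$. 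Since $\|X-Y\|_{op}=\|X-Y\|_{S_\infty}=\sigma_1(X-Y)\le\bigl(\sum_j \sigma_j(X-Y)^2\bigr)^{1/2}=\|X-Y\|_F$, the claimed inequality follows.

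I do not expect any real obstacle here: this is the classical Weyl perturbation estimate, and the only minor point deserving care is the justification of the min--max formula for $\sigma_i$. An equivalent route avoiding it is to pass to the Hermitian dilation $\mathcal{X}=\bigl(\begin{smallmatrix}0 & X\\ X^* & 0\end{smallmatrix}\bigr)\in M_{(n+p)\times(n+p)}$, whose eigenvalues are $\pm\sigma_j(X)$ together with $|p-n|$ additional zeros, and to invoke the classical Weyl inequality $|\lambda_i(\mathcal{X})-\lambda_i(\mathcal{Y})|\le\|\mathcal{X}-\mathcal{Y}\|_{op}$ for Hermitian matrices, noting that $\|\mathcal{X}-\mathcal{Y}\|_{op}=\|X-Y\|_{op}\le\|X-Y\|_F$. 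Either way one obtains the estimate first in the operator norm and then weakens it to the Frobenius norm; the stronger Hoffman--Wielandt bound $\sum_i|\sigma_i(X)-\sigma_i(Y)|^2\le\|X-Y\|_F^2$ would require a genuinely different argument, but it is not needed for the per-index statement claimed here.
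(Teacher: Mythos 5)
Your argument is correct. Note, however, that the paper does not prove this statement at all: it simply cites Weyl's 1912 paper and uses the estimate as a known fact, so there is no in-paper proof to compare against. Your derivation via the Courant--Fischer characterization $\sigma_i(Z)=\min_{\dim V=p-i+1}\max_{v\in V,\,\|v\|_2=1}\|Zv\|_2$, followed by the triangle inequality and the weakening $\|X-Y\|_{op}\le\|X-Y\|_F$, is the standard and complete route, and the dimension count justifying the min--max formula is handled correctly; the alternative via the Hermitian dilation is equally valid. In fact you prove the slightly stronger bound with $\|X-Y\|_{op}$ on the right-hand side, which immediately implies the Frobenius-norm version stated in the theorem.
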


\begin{proposition}
\label{rearrangers}
If $X \in M_{n \times p}$ and $Y \in M_{n \times p}$, then for any $j$, we have
$$
| \| X - X_{[j]} \|_{*} - \| Y - Y_{[j]} \|_{*} | \leq \| X - Y \|_{*},
$$
and for any $J > j$, we have
$
(J - j) \sigma_{J}(X) \leq \| X - Y \|_{*} + \| Y - Y_{[j]} \|_{*}
$.
\end{proposition}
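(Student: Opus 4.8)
The plan is to deduce both estimates from the fact, recalled just above, that $\|X - X_{[j]}\|_* = \rho_j(X)_* = \min_{\rank(Z)\le j}\|X-Z\|_*$ (the Eckart--Young theorem in the nuclear norm), combined with nothing more than the ordinary triangle inequality for $\|\cdot\|_*$. So the first step is simply to write both quantities $\|X-X_{[j]}\|_*$ and $\|Y-Y_{[j]}\|_*$ as best rank-$j$ approximation errors.

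For the first inequality, I would observe that $Y_{[j]}$ has rank at most $j$ and is therefore admissible in the minimization defining $\rho_j(X)_*$; hence
\[
\|X-X_{[j]}\|_* = \rho_j(X)_* \;\le\; \|X - Y_{[j]}\|_* \;\le\; \|X-Y\|_* + \|Y - Y_{[j]}\|_*,
\]
where the last step is the triangle inequality. Rearranging gives $\|X-X_{[j]}\|_* - \|Y-Y_{[j]}\|_* \le \|X-Y\|_*$; interchanging the roles of $X$ and $Y$ and using $\|X-Y\|_* = \|Y-X\|_*$ yields the matching reverse bound, and together these give the asserted absolute-value estimate.

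For the second inequality, the point is that $\|X - X_{[j]}\|_* = \sum_{i=j+1}^{n} \sigma_i(X)$. Assuming $j < J \le n$ (the case $J > n$ being vacuous with the convention $\sigma_J(X)=0$), the singular values are nonincreasing, so each of the $J-j$ numbers $\sigma_{j+1}(X),\dots,\sigma_J(X)$ is at least $\sigma_J(X)$, whence $\sum_{i=j+1}^{n}\sigma_i(X) \ge \sum_{i=j+1}^{J}\sigma_i(X) \ge (J-j)\,\sigma_J(X)$. Feeding this into the chain of inequalities established in the previous paragraph gives
\[
(J-j)\,\sigma_J(X) \;\le\; \|X-X_{[j]}\|_* \;\le\; \|X-Y\|_* + \|Y-Y_{[j]}\|_*,
\]
which is exactly the claim; note that only the one-sided form of the first part is needed here.

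I do not anticipate a genuine obstacle: everything rests on the best rank-$j$ approximation property of $X_{[j]}$ in the nuclear norm and on the triangle inequality, and the only points requiring a little care are the invocation of that approximation property and the trivial boundary case $J>n$. As a remark, the first inequality can alternatively be obtained without Eckart--Young by using the Weyl-type bound $\sigma_{j+k}(X) \le \sigma_k(X-Z)$, valid whenever $Z$ has rank at most $j$ (a consequence of $\sigma_{a+b-1}(A+B)\le\sigma_a(A)+\sigma_b(B)$; compare the related Frobenius-norm stability estimate in Theorem \ref{we3}), and summing over $k\ge 1$ with $Z = Y_{[j]}$; but the route through $\rho_j$ is the shortest.
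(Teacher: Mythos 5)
Your proof is correct and follows essentially the same route as the paper's: the chain $\| X - X_{[j]} \|_{*} \leq \| X - Y_{[j]} \|_{*} \leq \| X - Y \|_{*} + \| Y - Y_{[j]} \|_{*}$ with roles of $X$ and $Y$ reversed for the first claim, and $(J-j)\sigma_J(X) \leq \|X - X_{[j]}\|_*$ fed into that chain for the second. You merely make explicit the best rank-$j$ approximation property justifying the first step, which the paper leaves implicit.
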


\begin{proof}
The first statement is a simple chain of inequalities:
$$\| X - X_{[j]} \|_{*} \leq \| X - Y_{[j]} \|_{*} \leq \| X - Y \|_{*} + \| Y - Y_{[j]} \|_{*}.$$
The result follows as we may reverse the roles of $X$ and $Y$.
For the second inequality, it suffices to note that 
$(J-j) \sigma_J(X) \leq \| X - X_{[j]} \|_{*}$.
\end{proof}

\subsection{Differentiation and singular values}
\label{appendix:sing}

Let $\PP_n$ denote the set of all $n \times n$ permutation matrices.
A function $f : \R^n \to \R$ is called absolutely symmetric if 
\[
f(P D x) = f(x) \quad \mbox{for all } x \in \R^n, P \in \PP_n, 
\] 
and for all diagonal matrices $D$ having only the values $+1,-1$ on the diagonal. A function
$F : M_{n \times p} \to \R^n$ is called unitarily invariant if $F(U^* X V)$ for all $X \in M_{n \times p}$, and for all unitary
matrices $U \in M_{n \times n},V \in M_{p \times p}$. 
The following
fact is well-known, see for instance \cite[Proposition 5.1]{lese05}.
\begin{proposition}\label{prop:unitary:invariance} A function  $F : M_{n \times p} \to \R$, $n \leq p$, 
is unitarily invariant if and only if
$F(X) = f(\sigma(X))$ for  some absolutely symmetric function 
$f : \R^n \to \R$. Here, $\sigma(X) \in \R^n$ denotes the vector of singular values of $X$.
\end{proposition}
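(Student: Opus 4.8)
The plan is to establish the two implications separately. The reverse implication is immediate: if $F(X)=f(\sigma(X))$ for an absolutely symmetric $f:\R^n\to\R$, then for any unitary matrices $U\in M_{n\times n}$ and $V\in M_{p\times p}$ we have $\sigma(U^*XV)=\sigma(X)$, hence $F(U^*XV)=f(\sigma(U^*XV))=f(\sigma(X))=F(X)$, so $F$ is unitarily invariant. This uses nothing beyond the definitions and the unitary invariance of singular values.

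For the forward implication, assume $F$ is unitarily invariant and define $f:\R^n\to\R$ by $f(x):=F(D_x)$, where $D_x\in M_{n\times p}$ denotes the matrix with $(i,i)$-entry $x_i$ for $1\le i\le n$ and all other entries zero. First I would verify that $F=f\circ\sigma$: writing a singular value decomposition $X=U\Sigma V^*$ with $\Sigma=D_{\sigma(X)}$, unitary invariance applied to $\Sigma=U^*XV$ gives $F(X)=F(\Sigma)=F(D_{\sigma(X)})=f(\sigma(X))$, which holds for every $X\in M_{n\times p}$.

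It then remains to show $f$ is absolutely symmetric. For a permutation matrix $P\in\PP_n$ one checks the matrix identity $D_{Px}=P\,D_x\,\widetilde P^{\,*}$, where $\widetilde P\in M_{p\times p}$ is the block-diagonal unitary $\widetilde P=\diag(P,I_{p-n})$; since $P$ and $\widetilde P$ are unitary, $f(Px)=F(D_{Px})=F(P D_x \widetilde P^{\,*})=F(D_x)=f(x)$. For a diagonal matrix $D$ with entries $\pm 1$ one has directly $D_{Dx}=D\,D_x$ (padding $D$ with $I_{p-n}$ on the right if a square right factor is wanted), so again $f(Dx)=F(D\,D_x)=F(D_x)=f(x)$. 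Composing these invariances yields $f(PDx)=f(x)$ for all such $P$ and $D$, which is precisely the absolute symmetry of $f$.

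The argument presents no genuine obstacle; the only point requiring care is the bookkeeping that $n\times n$ unitary congruences of the diagonal embedding $D_x$ realize exactly the coordinate permutations and sign changes of $x$ (forcing the induced $f$ to be absolutely symmetric), together with the elementary fact that every $X$ can be written as $U\,D_{\sigma(X)}\,V^*$ with $U,V$ unitary (forcing $F$ to factor through $\sigma$). Alternatively, one may simply invoke \cite[Proposition 5.1]{lese05}, as is done elsewhere in the paper.
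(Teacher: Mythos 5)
The paper itself offers no proof of this proposition: it is stated as a well-known fact with a pointer to \cite[Proposition 5.1]{lese05}, so there is no internal argument to compare against. Your proof is a correct and self-contained substitute. The reverse implication is exactly the unitary invariance of singular values. In the forward direction, defining $f(x)=F(D_x)$ and using $\Sigma=U^*XV$ to get $F(X)=F(D_{\sigma(X)})$ is right, and the two matrix identities $D_{Px}=P\,D_x\,\widetilde P^{\,*}$ (with $\widetilde P\in M_{p\times p}$ the unitary extending $P$ by $I_{p-n}$) and $D_{Dx}=D\,D_x$ do realize coordinate permutations and sign changes of $x$ as unitary equivalences of $D_x$, so $f$ inherits absolute symmetry from the unitary invariance of $F$; I checked the index bookkeeping in the permutation identity and it holds. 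One small point worth making explicit: $f$ is defined on all of $\R^n$, not just on nonnegative decreasing vectors, and it is precisely the absolute symmetry you establish that makes the identity $F=f\circ\sigma$ insensitive to the ordering and sign conventions built into $\sigma(X)$ --- your argument covers this. In short, where the paper delegates to the literature, you give a short elementary proof; both are acceptable, and yours has the advantage of keeping the paper self-contained at essentially no cost.
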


Let $f : \R^n \to \R$ be absolutely symmetric. 
Then the function $F(X) = f(\sigma(X))$ is convex
if and only if $f$ is convex. We have the following result concerning differentiation of $F$ \cite[Proposition 6.2]{lese05}.

\begin{proposition}\label{diff:singvalues} Let $f : \R^n \to \R$ be an absolutely symmetric and convex function. The function 
$F = f \circ \sigma$ on $M_{n \times p}$ is differentiable at $X \in M_{n\times p}$ if and only if $f$ is differentiable 
at $\sigma(X)$. In this case
\[
\nabla F(X) = U \diag( \nabla f (\sigma(X)) V^*, 
\]
where $X = U \diag(\sigma(X)) V^*$ for unitary matrices $U,V$.
\end{proposition}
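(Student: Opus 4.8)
The plan is to obtain differentiability of $F = f\circ\sigma$ from the structure of its convex subdifferential, using the basic fact of convex analysis that a finite convex function on a finite-dimensional space is differentiable at a point precisely when its subdifferential there is a singleton. Since $f$ is convex, so is $F$ (as recalled just above the statement), so this applies to $F$. The one analytic tool I would invoke is von Neumann's trace inequality,
\[
\Re\langle A,B\rangle \;\le\; \sum_{i=1}^{n}\sigma_i(A)\,\sigma_i(B)\qquad (A,B\in M_{n\times p}),
\]
together with its equality characterization: equality holds if and only if $A$ and $B$ admit a \emph{common ordered singular value decomposition}, i.e.\ there exist unitaries $U,V$ with $A = U\operatorname{diag}(\sigma(A))V^*$ and $B = U\operatorname{diag}(\sigma(B))V^*$, both diagonals in non-increasing order. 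Write $\Omega(X)$ for the set of pairs $(U,V)$ realizing an ordered SVD of $X$.

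The core step is to establish the subdifferential formula
\[
\partial F(X) \;=\; \bigl\{\, U\operatorname{diag}(d)V^*\ :\ d\in\partial f(\sigma(X)),\ (U,V)\in\Omega(X)\,\bigr\}.
\]
For ``$\supseteq$'': given $d\in\partial f(\sigma(X))$ and $(U,V)\in\Omega(X)$, set $G = U\operatorname{diag}(d)V^*$, so $\sigma(G)$ is the non-increasing rearrangement of $|d|$ and $\langle G,X\rangle = \langle d,\sigma(X)\rangle$. Applying the subgradient inequality for $f$ at a signed rearrangement of $\sigma(Y)$ that maximizes the linear term — this uses the absolute symmetry of $f$ — and then bounding $\Re\langle G,Y\rangle$ by von Neumann's inequality, the subgradient inequality for $f$ transfers to $F$, so $G\in\partial F(X)$. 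For ``$\subseteq$'': given $G\in\partial F(X)$, apply the subgradient inequality to matrices $Y$ with $\sigma(Y)=\sigma(X)$, on which $F$ is constant; taking the supremum over such $Y$ and invoking the equality case of von Neumann's inequality forces $\Re\langle G,X\rangle = \sum_i\sigma_i(G)\sigma_i(X)$, so $G$ and $X$ share an ordered SVD $(U',V')\in\Omega(X)$, and testing the subgradient inequality against diagonal perturbations $Y = U'\operatorname{diag}(x)(V')^*$ (where $F(Y)=f(x)$ by absolute symmetry) shows $\sigma(G)\in\partial f(\sigma(X))$.

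With the formula in hand the equivalence follows. If $f$ is not differentiable at $\sigma(X)$, then $\partial f(\sigma(X))$ contains distinct $d\ne d'$; fixing any $(U,V)\in\Omega(X)$, the matrices $U\operatorname{diag}(d)V^*\ne U\operatorname{diag}(d')V^*$ both lie in $\partial F(X)$, which is therefore not a singleton, so $F$ is not differentiable at $X$. Conversely, if $f$ is differentiable at $\sigma(X)$, then $\partial F(X) = \{U\operatorname{diag}(\nabla f(\sigma(X)))V^* : (U,V)\in\Omega(X)\}$, and it remains to check that this matrix is independent of $(U,V)\in\Omega(X)$. Here I would use equivariance of the gradient: differentiating $f(\Pi x)=f(x)$ gives $\nabla f(\Pi x) = \Pi\,\nabla f(x)$ for every signed permutation matrix $\Pi$, so evaluating at $x=\sigma(X)$ with $\Pi$ in the stabilizer of $\sigma(X)$ shows $\nabla f(\sigma(X))$ is constant on each block of equal singular values and vanishes on the block of zero singular values. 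Two ordered SVDs of $X$ differ by $(U,V)\mapsto(UP,VQ)$ with $P,Q$ block-diagonal along those blocks and satisfying $P\operatorname{diag}(\sigma(X)) = \operatorname{diag}(\sigma(X))Q$, hence agreeing on the nonzero blocks; since $\operatorname{diag}(\nabla f(\sigma(X)))$ is block-constant there and zero elsewhere, it commutes with this change and $U\operatorname{diag}(\nabla f(\sigma(X)))V^*$ is unchanged. Thus $\partial F(X)$ is a singleton, $F$ is differentiable at $X$, and its gradient is the stated matrix.

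I expect the main obstacle to be the equality case of von Neumann's trace inequality — the assertion that $\Re\langle A,B\rangle=\sum_i\sigma_i(A)\sigma_i(B)$ forces a common ordered SVD — together with the bookkeeping of the non-uniqueness of the SVD at repeated and at zero singular values and in the rectangular case $n<p$; these are what make the ``$\subseteq$'' inclusion and the well-definedness of the gradient the substantive parts, the rest being routine convex analysis. A more elementary alternative would avoid the subdifferential formula entirely, expanding $\sigma_i(X+H) = \sigma_i(X) + \Re(u_i^* H v_i) + o(\|H\|_F)$ at a simple singular value (with left and right singular vectors $u_i,v_i$) and treating degenerate blocks by a perturbation-and-continuity argument, but I expect that route to be longer.
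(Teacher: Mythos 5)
The paper does not actually prove this proposition: it is imported verbatim, with the reference \cite[Proposition 6.2]{lese05}, so there is no in-paper argument to compare yours against. Your route --- von Neumann's trace inequality together with its equality case, the transfer formula $\partial F(X) = \{U\operatorname{diag}(d)V^* : d \in \partial f(\sigma(X)),\ (U,V) \in \Omega(X)\}$, and the identification of differentiability with a singleton subdifferential --- is precisely the standard convex-analytic derivation used in that cited source, and the details you sketch (the maximizing signed rearrangement in the ``$\supseteq$'' step, forcing a shared ordered SVD in the ``$\subseteq$'' step, and the block-constancy plus vanishing-on-the-kernel argument that makes the gradient independent of the choice of SVD) are all correct. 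The only ingredient taken on faith is the equality characterization of von Neumann's inequality, which you flag explicitly and which is a legitimate citation rather than a gap.
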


\subsubsection*{Acknowledgments} We would like to thank  Ingrid Daubechies and Maryam Fazel for various
conversations on the topic of this paper.  Massimo Fornasier further acknowledges the financial support provided by the START-Prize ``Sparse Approximation and Optimization in High Dimensions'' of the Fonds zur F\"orderung der wissenschaftlichen Forschung (FWF, Austrian Science Foundation). The results of the paper also contribute to the project WWTF Five senses-Call 2006, Mathematical Methods for Image Analysis and Processing in the Visual Arts.  Holger Rauhut would like to thank the Hausdorff Center for Mathematics for generous support and excellent working conditions, and acknowledges funding through the WWTF project SPORTS (MA07-004).
Rachel Ward was funded in part by the National Science Foundation Postdoctoral Research Fellowship.

\bibliography{IRLSM}
\bibliographystyle{abbrv}
\end{document}